\theoremstyle{plain}
\newtheorem{Thm}{Theorem}
\newtheorem{Assump}[Thm]{Assumption}
\newtheorem{Prop}[Thm]{Proposition}
\newtheorem{Rem}[Thm]{Remark}
\newtheorem{Lem}[Thm]{Lemma}
\newtheorem{Cor}[Thm]{Corollary}
\newtheorem{Def}[Thm]{Definition}
\newcommand {\p}{\partial}
\newcommand{\q}{\quad}
\newcommand{\qq}{\qquad}
\newcommand{\eq}{\begin{equation}}
\newcommand{\eeq}{\end{equation}}
\def\a{\alpha}
\def\g{\gamma}
\def\k{\kappa}
\def\lam{\lambda}
\def\p{\partial}
\def\O{\Omega}
\def\s{\sigma}
\def\var{\varepsilon}
\def\lam{\lambda}
\def\var{\varepsilon}
\def\A{\bold A}
\def\B{\bold B}
\def\D{\bold D}
\def\E{\bold E}
\def\e{\bold e}
\def\f{\bold f}
\def\H{\bold H}
\def\h{\bold h}
\def\j{\bold j}
\def\J{\bold J}
\def\u{\bold u}
\def\U{\bold U}
\def\bv{\bold v}
\def\w{\bold w}
\def\y{\bold y}
\def\Y{\bold Y}
\def\z{\bold z}
\def\Z{\bold Z}
\def\0{\bold 0}
\def\mA{\mathcal A}
\def\mB{\mathcal B}
\def\mE{\mathcal E}
\def\mH{\mathcal H}
\def\mL{\mathcal L}
\def\mP{\mathcal P}
\def\mT{\mathcal T}
\def\mU{\mathcal U}
\def\q{\quad}
\def\qq{\qquad}
\def\qqq{\qq\qq}
\def\curl{\text{\rm curl\,}}
\def\div{\text{\rm div\,}}
\def\loc{\text{\rm loc}}
\def\grad{\text{\rm grad\,}}
\numberwithin{equation}{section}
\numberwithin{Thm}{section}
\begin{document}
\large
\title[Magneto]{General Magneto-Static Model}

\author[X.B.Pan]{Xing-Bin Pan }

\address{School of Mathematics,
East China Normal University,  and NYU-ECNU Institute of Mathematical Sciences at NYU Shanghai,
Shanghai 200062, P.R. China. xbpan@math.ecnu.edu.cn}


\keywords{magneto-static model; quasilinear curl system; domain topology; weak solution; existence}

\subjclass[2010]{35J61; 35J62; 35Q60; 35Q61; 78A25}

\begin{abstract}
In this paper we study a nonlinear magneto-static model on a general domain which is multiply-connected and has $m$ holes, and under a nonlinear relation between magnetic induction $\B$ and magnetic field $\H$. The equation contains a Neumann field $\h_1\in\Bbb H_1(\O)$ and a Dirichlet field $\h_2\in\Bbb H_2(\O)$, which represent the effects of domain topology. For a general electric current, the equation contains an unknown gradient $\nabla p(x)$, which represents the electric field. Existence results of solutions of boundary value problems of this model under various types of boundary conditions are proved, which exhibit the effects of domain topology.
\end{abstract}

\maketitle


\section{Introduction}

In this paper we study a nonlinear magneto-static model on a general domain in $\Bbb R^3$, which is multiply-connected and has $m$ holes (namely the boundary of $\p\O$ consists of $m+1$ connected components). We are interested in the effects of domain topology on solvability of the boundary value problems.

In the recent years quasilinear magneto-static models in electromagnetism have been studied by many authors. A typical equation is the quasilinear Maxwell type equation of the following form:
\eq\label{1}
\curl [\mH(x,\curl\u)]=\J(x),\qq \div\u=0\q \text{in }\O,\\
\eeq
where $\J$ is a given vector field. This system is a time-independent version of the eddy current model for a magnetic and anisotropic material where the relation between magnetic induction field $\B$ and the magnetic field $\H$ is given by the BH-curve $\H=\mH(\B)$ which is usually nonlinear,
see For instance, Milani and Picard  \cite{MP1, MP2}, R. Picard \cite{Pi}, Jochmann \cite{Jo}, F. Bachinger, U. Langer and J. Sch\"oberl \cite{BLS}, Jiang and Zheng \cite{JZ1, JZ2}, L. Yousept \cite{Yo},  Pan \cite{P4}, and the references therein.

It was shown in \cite{P4} that for a system similar to \eqref{1} with $\J$ replaced by a nonlinear vector-valued function $\f(x,\u)$, it is in general necessary to introduce a potential term into the equation and one needs to consider a Maxwell-Stokes type system
\eq\label{2}
\curl [\mH(x,\curl\u)]=\f(x,\u)+\nabla p,\qq \div\u=0\q \text{in }\O.
\eeq
It was also shown in \cite{P4} that the type of boundary condition for $p$ should be determined according to the topology of $\O$. If $\O$ has no holes, then it is natural to consider Dirichlet boundary condition for $p$. If $\O$ has holes, then one has to consider Neumann type boundary condition for $p$. In this paper we will explain the physical significance of the term $\nabla p$.

In this paper we assume that the relation between magnetic induction $\B$ and magnetic field  intensity $\H$ is nonlinear:
$$\B=\mB(x,\H),\q \text{or}\q
\H=\mH(x,\z).
$$
To avoid the complexity of electric part, we assume that the relation between electric displacement $\D$ and electric field intensity $\E$ is linear: $\D=\var\E,$ where $\var$ is scalar or matric valued function. Finally, we shall assume the electric current density $\j$ satisfies the Ohm's law. We shall introduce a vector field $\u$ and a scalar function $p$, such that the electric field $\E$ and magnetic induction $\B$ can be represented by
\eq\label{EB-up}
\s\E=\nabla p+\h_1,\q \B=\curl\u+\h_2,
\eeq
where $\h_1\in\Bbb H_1(\O)$, $\h_2\in\Bbb H_2(\O)$, which describe the topology of the domain $\O$. Then we can derive the magneto-static model. If the electric current is given, then the equation is
\eq\label{M6}
\left\{\aligned
&\curl [\mH(x,\curl\u+\h_2(x))]=\J(x)+\h_1(x)\q &\text{in }\O,\\
&\div\u=0\q &\text{in }\O;
\endaligned\right.
\eeq
and if the electric current depends on the magnetic induction, then the equation is
\eq\label{M7}
\left\{\aligned
&\curl [\mH(x,\curl\u+\h_2(x))]=\f(x,\curl\u+\h_2)+\h_1(x)+\nabla p\q &\text{in }\O,\\
&\div\u=0\q &\text{in }\O.
\endaligned\right.
\eeq
We shall examine solvability of \eqref{M6} subjected to a boundary condition on $\u$. For \eqref{M7} we need also a boundary condition for $p$. Following the observation in \cite{P4}, we shall consider Neumann type boundary condition for $p$. Regularity of weak solutions of BVPs of \eqref{M6} and \eqref{M7} can be obtained by the methods in \cite{P4} and will not be discussed in this paper.

We mention that the gradient term $\nabla p$ in \eqref{M7} has a clear physical meaning. In fact, from \eqref{EB-up}  we see that
$$
\nabla p=\s\E-\h_1,
$$
so $\nabla p$ represents both electric field $\E$ and domain topology.
We also mention that both \eqref{M6} and \eqref{M7} contain the topological terms $\h_1, \h_2$, and we will see that solvability of the boundary value problems (BVPs for short) for \eqref{M6} and \eqref{M7} depends on the choice of $\h_1, \h_2$, which is an effect of domain topology on these equations.

In this paper we shall use various results on vector fields, including the divergence-curl-gradient inequalities, and regularity of div-curl system and of time-independent Maxwell's system, which can be fund in literature, see for instance \cite{DaL, GR, MMT, Sc, W, BW, NW, AS, AuA, CDN, KY} and the references therein.
 We mention that time-independent semilinear Maxwell system has also been studied by many authors, see for instance \cite{Y, BF, JS, P3} and the references therein. We also mention that the Meissner states of type I\!I superconductors can be described by a quasilinear system of the form similar to \eqref{1}, see \cite{Ch, Mon, BaP, P1, LP}.

This paper is organized as follows. In section 2 we list notation for various spaces of vector fields, and collect some facts on these spaces that we will need. In section 3 we derive the magneto-static equations. Sections 4, 5, and 6 are devoted to  study of existence of weak solutions to BVPs of \eqref{M6} under various type of boundary conditions on $\u$. Solvability of BVPs for \eqref{M7} is studied in section 7.

\section{Preliminaries}

\subsection{Notation}\

Most of materials in this subsection can be fund in \cite{DaL, GR}.
 Let $\O$ be a bounded domain in $\Bbb R^3$ and $\nu$ be the unit outer normal vector of the boundary $\p\O$.
We assume that $\O$ has the following properties (\cite[p.217]{DaL}):
\begin{itemize}
\item[$(O_1)$] $\O$ is a bounded domain in $\Bbb R^3$ with a $C^r$ boundary
$\p\O$, $r\geq 1$; $\O$ is locally situated on one side of
$\p\O$; $\p\O$ has a finite number of connected components
$\Gamma_1, \cdots, \Gamma_{m+1}$, where $m\geq 0$ and
$\Gamma_{m+1}$ denoting the boundary of the unbounded connected
component of the set $\Bbb R^3\setminus\bar\O$.
\item[$(O_2)$] There exist $N$ manifolds of dimension $2$ and of class $C^r$
denoted by $\Sigma_1,\cdots,\Sigma_N$, $N\geq 0$, such that
$\Sigma_i\cap \Sigma_j=\emptyset$ for $i\neq j$ and non-tangential
to $\p\O$, such that $\dot\O=\O\setminus(\sum_{j=1}^N\Sigma_j)$ is simply-connected and Lipschitz.
\end{itemize}

\subsubsection{The spaces of vector fields}\

As in \cite{BaP, P1, P4}, we denote the Sobolev spaces of scalar functions by $W^{k,p}(\O)$, $H^k(\O)$ and $H^s(\p\O)$ etc, and denote the Sobolev spaces of vector fields by $W^{k,p}(\O,\Bbb R^3)$, $H^k(\O,\Bbb R^3)$, $H^s(\p\O,\Bbb R^3)$ etc.
We use same notation for the norm of scalar functions and for vector fields. For instance, the norms in $H^k(\O)$ and in $H^k(\O,\Bbb R^3)$ are both denoted by $\|\cdot\|_{H^k(\O)}$.
We denote
$$\aligned
&\Bbb H_1(\O)=\{\u\in L^2(\O,\Bbb R^3):~ \curl\u=\0\;\;\text{and}\;\; \div\u=0\;\;\text{in }\O,\; \nu\cdot\u=0\;\;\text{on }\p\O\},\\
&\Bbb H_2(\O)=\{\u\in L^2(\O,\Bbb R^3):~ \curl\u=\0\;\;\text{and}\;\; \div\u=0\;\;\text{in }\O,\; \u_T=\0\;\;\text{on }\p\O\}.
\endaligned
$$
It is well-known that $\dim\Bbb H_1(\O)=N$ and $\dim\Bbb H_2(\O)=m$, where $m$ and $N$ are given in $(O_1)$ and $(O_2)$. We denote by $\Bbb H_j(\O)^\perp_{L^2(\O)}$ the orthogonal complementary of $\Bbb H_j(\O)$ in $L^2(\O,\Bbb R^3)$. When $\p\O$ is of $C^2$, using regularity of div-curl system we have
\eq\label{H1H2}
\|\h_j\|_{H^1(\O)}\leq C_j(\O)\|\h_j\|_{L^2(\O)},\q \forall \h_j\in\Bbb H_j(\O),\; j=1,2.
\eeq

If $X(\O)$ denotes a space of vector fields on $\O$, we write
$$\aligned
&X(\O,\div0)=\{\u\in X(\O): \div\u=0\;\;\text{in }\O\},\\
&X(\O,\curl0)=\{\u\in X(\O): \curl\u=\0\;\;\text{in }\O\},\\
&X_{t0}(\O)=\{\u\in X(\O): \u_T=\0\;\;\text{on }\p\O\},\\
&X_{n0}(\O)=\{\u\in X(\O): \nu\cdot\u=0\;\;\text{on }\p\O\}.
\endaligned
$$
For instance,
$$\aligned
&\mH(\O,\curl,\div)=\{\w\in L^2(\O,\Bbb R^3): \curl\w\in L^2(\O,\Bbb R^3),\; \div\w\in L^2(\O)\},\\
&\mH(\O,\curl,\div0)=\{\w\in \mH(\O,\curl,\div),\; \div\w=0\},\\
&\mH(\O,\curl0,\div0)=\{\w\in \mH(\O,\curl,\div): \curl\w=\0,\; \div\w=0\}.
\endaligned
$$
If $Y(\O)$ is a space of scalar functions on $\O$, we denote
$$\dot Y(\O)=\{\phi\in X(\O): \int_\O \phi dx=0\}.
$$

Denote
$$\aligned
&L^{2,-1/2}_t(\O,\Bbb R^3)=\{\w\in L^2(\O,\Bbb R^3): \nu\times\w\in H^{-1/2}(\p\O,\Bbb R^3)\},\\
&L^{2,-1/2}_\nu(\O,\Bbb R^3)=\{\w\in L^2(\O,\Bbb R^3): \nu\cdot\w\in H^{-1/2}(\p\O)\},\\
&L^{2,-1/2}_{t0}(\O,\Bbb R^3)=\{\w\in L_t^{2,-1/2}(\O,\Bbb R^3): \nu\times\w=\0\;\text{on }\p\O\},\\
&L^{2,-1/2}_{\nu0}(\O,\Bbb R^3)=\{\w\in L^{2,-1/2}_\nu(\O,\Bbb R^3): \nu\cdot\w=0\;\text{on }\p\O\}.
\endaligned
$$
We also need spaces of tangential vector fields:
$$\aligned
T\!H^{k+1/2}(\p\O,\Bbb R^3)=&\{\u\in H^{k+1/2}(\p\O,\Bbb R^3):~ \nu\cdot\u=0\},\\
T\!C^{k,\a}(\p\O,\Bbb R^3)=&\{\u\in C^{k,\alpha}(\p\O,\Bbb R^3):~ \nu\cdot\u=0\}.
\endaligned
$$
We denote
\eq\label{1/2}
\aligned
&\langle\phi,\eta\rangle_{\p\O,1/2}=\langle\phi,\eta\rangle_{H^{-1/2}(\p\O),H^{1/2}(\p\O)},\\
&\dot H^{-1/2}(\p\O)=\{\zeta\in H^{-1/2}(\p\O)~:~ \langle  \zeta, 1\rangle_{\p\O,1/2}=0\}.
\endaligned
\eeq

\subsubsection{Decomposition of $L^2(\O,\Bbb R^3)$}\

Let us recall that
(see \cite[p.225-226]{DaL}):
\eq\label{L2-decom}
\aligned
&L^2(\O,\Bbb R^3)=\mathcal H(\O,\curl0)\oplus_{L^2(\O)} \curl H^1_{t0}(\O,\div0),\\
&L^2(\O,\Bbb R^n)=\mathcal H(\O,\div0)\oplus_{L^2(\O)} \grad H^1_0(\O),
\endaligned
\eeq
and recall image of operator $\curl$  (see \cite[p.222, Proposition 3; p.226, Remark 5]{DaL}):
\eq\label{im-curl}
\aligned
\curl H^1(\O,\Bbb R^3)=\curl H^1_{n0}(\O,\div0)=&\mathcal H^\Gamma(\O,\div0),\\
\curl H^1_{t0}(\O,\div0)=&\mathcal H_0^\Sigma(\O,\div0),
\endaligned
\eeq
where
$$\aligned
\mathcal H^\Gamma(\O,\div0)=&\{\u\in \mathcal H(\O,\div0): \langle \u\cdot\nu, 1\rangle_{H^{-1/2}(\Gamma_j),H^{1/2}(\Gamma_j)}=0,\; j=1,\cdots, m+1\},\\
\mathcal H_0^\Sigma(\O,\div0)
=&\{\u\in L^2(\O,\Bbb R^3): \div\u=0,\; \u\cdot\nu|_{\p\O}=0,\\
&\qqq \langle \u\cdot\nu, 1\rangle_{H^{-1/2}(\Sigma_i),H^{1/2}(\Sigma_i)}=0,\; i=1,\cdots, N\}.
\endaligned
$$
These yield the following decompositions of the kernel of the operators $\curl$ and $\div$:
\eq\label{dec-0}
\aligned
\mathcal H(\O,\curl0)=&\grad H^1(\O)\oplus_{L^2(\O)}\Bbb H_1(\O),\\
\mathcal H(\O,\div0)=& \mH^\Gamma(\O,\div0)\oplus_{L^2(\O)}\Bbb H_2(\O),\\
\mathcal H_0(\O,\div0)=&\mathcal H^\Sigma_0(\O,\div0)\oplus_{L^2(\O)}\Bbb H_1(\O).
\endaligned
\eeq
It follows that $\Bbb H_1(\O)\subset \mH^\Gamma(\O,\div0)$ and $\Bbb H_2(\O)\perp_{L^2(\O)}\Bbb H_1(\O)$.

From the first equality of \eqref{L2-decom} and the first equality of \eqref{dec-0} we have
\eq\label{dec-1}
L^2(\O,\Bbb R^3)=\mH^\Sigma_0(\O,\div0)\oplus_{L^2(\O)} \Bbb H_1(\O)\oplus_{L^2(\O)} \text{grad} H^1(\O).
\eeq
From the second equality of \eqref{L2-decom} and the second equality of \eqref{dec-0} we have
\eq\label{dec-2}
L^2(\O,\Bbb R^3)=\mH^\Gamma(\O,\div0)\oplus_{L^2(\O)} \Bbb H_2(\O)\oplus_{L^2(\O)} \text{grad} H^1_0(\O).
\eeq

\subsection{Assumptions on $\mH(x,\z)$}\

When $\O$ is bounded, we say $\mH\in C^{k,\a}_{\loc}(\bar\O\times\Bbb R^3,\Bbb R^3)$ if $\mH\in C^{k,\a}_{\loc}(\bar\O\times K,\Bbb R^3)$ for any bounded set $K\subset \Bbb R^3$.
Now we list the conditions on $\mH(x,\z)$ that we may need.
\begin{itemize}
\item[$(H_1)$] $\mH\in C^0_{\loc}(\bar\O\times \Bbb R^3,\Bbb R^3)$, and there exist positive constants $c_1, c_2$, and functions $g_1\in L^1(\O)$, $g_2\in L^2(\O)$ such that, for all $x\in\bar\O$ and $\z\in\Bbb R^3$,
$$
c_1|\z|^2-g_1(x)\leq \langle \mH(x,\z),\z\rangle,\qq  |\mH(x,\z)|\leq c_2|\z|+g_2(x).
$$
\item[$(H_2)$] $\mH\in C^1_{\loc}(\bar\O\times \Bbb R^3,\Bbb R^3)$, and there exist positive constants $\mu, M_1, M_2$ such that for all $x\in\bar\O$ and $\z,\; \xi\in\Bbb R^3$,
$$
\aligned
& \langle \nabla_\z \mH(x,\z)\xi,\xi\rangle\geq \mu|\xi|^2,\\
&|\nabla_x \mH(x,\z)|\leq M_1(|\z|+1),\qq
 |\nabla_\z\mH(x,z)|\leq M_2.
\endaligned
$$
\item[$(H_3)$] $\mH(x,\z)$ has an inverse $\mB(x,\w)$, namely,
$\z=\mB(x,\w)$ if and only if $\w=\mH(x,\z)$.
\end{itemize}

\begin{Rem} If $\mH$ satisfies $(H_2)$ then it is Lipschitz in $\z$, and has the following property:
\begin{itemize}
\item[$(H_4)$] There exists a constant $\mu>0$ such that
$$\langle \mH(x,\z_2)-\mH(x,\z_1),\z_2-\z_1\rangle\geq\mu|\z_2-\z_1|^2.
$$
\end{itemize}
\end{Rem}

We can show that $(H_1), (H_2), (H_3)$ imply the following properties of $\mB$:

\begin{itemize}
\item[$(B_1)$] $\mB\in C^0_{\loc}(\bar\O\times \Bbb R^3,\Bbb R^3)$, and there exist positive constants $c_3, c_4$, and functions $g_3\in L^1(\O)$, $g_4\in L^2(\O)$ such that, for all $x\in\bar\O$ and $\w\in\Bbb R^3$,
$$
c_3|\w|^2-g_3(x)\leq \langle \mB(x,\w),\w\rangle, \q |\mB(x,\w)|\leq c_4|\w|+g_4(x).
$$
\item[$(B_2)$] $\mB\in C^1_{\loc}(\bar\O\times\Bbb R^3)$,  and there exist positive constants $\lam_0, N_1, N_2$ such that for all $x\in\bar\O$ and $\w,\; \eta\in\Bbb R^3$,
$$
\aligned
&\langle(\nabla_\w\mB(x,\w))\eta,\eta\rangle\geq\lam_0|\eta|^2,\\
&|\nabla_x\mB(x,\w)|\leq N_1(|\w|+1),\qq
|\nabla_\w\mB(x,\w)|\leq N_2.
\endaligned
$$
\end{itemize}

\begin{Rem}\label{Rem4.2}
\begin{itemize}
\item[(i)] If $\H$ satisfies $(H_1)$, $(H_3)$, then $\mB$ satisfies $(B_1)$, with
$$c_3={c_1\over 2c_2^2},\q c_4={1\over c_1},\q g_3(x)=g_1(x)+c_3g_2(x)^2,\q  g_4(x)=\sqrt{2|g_1(x)|\over c_1}.
$$
\item[(ii)] If $\H$ satisfies $(H_2)$, $(H_3)$, then $\mB$ satisfies $(B_2)$, where $\lam_0$ and $N_2$ depend on $\mu$ and $M_2$, $N_1$ depends on $\mu, M_1, M_2$. In fact,
$$ \lam_0={\mu\over M_2^2},\q      N_2= {9C_0M_1M_2^2\over \mu^3},\q          N_2= {C_0M^2_2\over \mu^3},
$$
where $C_0$ is an absolute constant.
\end{itemize}
\end{Rem}

\begin{Rem}\label{Lem-mono-B} If $(B_2)$ holds, then for any $\w_1,\w_2\in\Bbb T^3$ and $x\in\bar\O$,
\eq\label{mono-B}
\langle \mB(x,\w_2)-\mB(x,\w_1), \w_2-\w_1\rangle\geq \lam_0|\w_2-\w_1|^2.
\eeq
\end{Rem}

\subsection{Assumption on $\f(x,\z)$}\

\begin{itemize}
\item[$(f_1)$]  $\f\in C^1_{\loc}(\bar\O\times\Bbb R^3,\Bbb R^3)$, and there exist positive constants $K_0, K_1, K_2$ and $f_0\in L^2(\O)$ such that, for all $x\in\bar\O$ and $\z\in \Bbb R^3$,
$$\aligned
&|\f(x,\z)|\leq K_0|\z|+f_0(x),\q |\nabla_{x}\f(x,\z)|\leq K_1(|\z|+1),\\
&|\nabla_{\z}\f(x,\z)|\leq K_2.
\endaligned
$$
\end{itemize}

\subsection{The Projections $\mP_\nu$ and $\mP_n$}\label{SecA}

\begin{Def}
We introduce the projection $\mP_\nu$ as follows. If $\w\in L^2(\O,\Bbb R^3)$, then $\mP_\nu[\w]=\w+\nabla\phi_\w$, where $\phi_\w\in \dot H^1(\O)$ is the unique weak solution of
\eq\label{Neumann}
-\Delta\phi=\div\w\q\text{\rm in }\O,\qq
{\p \phi\over\p\nu}=-\nu\cdot\w\q\text{\rm on }\p\O,
\eeq
namely, for any $\phi\in H^1(\O)$ and
\eq\label{wkNeumann}
\int_\O(\nabla\phi+\w)\cdot\nabla\eta dx=0,\q\forall \eta\in H^1(\O).
\eeq
\end{Def}

\begin{Rem}\label{Lem-A.2} Assume $\O$ is  abounded domain in $\Bbb R^3$ with a $C^2$ boundary. Then
$\mP_\nu: L^2(\O,\Bbb R^3)\mapsto \mH^\Gamma(\O,\div0)$
is a continuous projection.
\end{Rem}

\begin{Def}
Assume $\nu\times\H^0$ satisfies \eqref{condMNa-J1}. We introduce the Neumann projection $\mP_n$ associated with $\nu\cdot\curl\H^0_T$ as follows. If $\w\in L^2(\O,\Bbb R^3)$, then $\mP_n[\w]=\w+\nabla\psi_\w$, where $\psi_\w\in \dot H^1(\O)$ is the unique weak solution of
\eq\label{Neumann-curl}
-\Delta\psi=\div\w\q\text{\rm in }\O,\qq
{\p \psi\over\p\nu}=\nu\cdot\curl\H^0_T-\nu\cdot\w\q\text{\rm on }\p\O,
\eeq
namely, for all $\eta\in H^1(\O)$ it holds that
$$
\int_\O(\nabla\psi+\w)\cdot\nabla\eta dx=\langle\nu\cdot\curl\H^0_T,\eta\rangle_{\p\O, 1/2}.
$$
\end{Def}

\begin{Lem}\label{Lem-A.4} Assume $\O$ is a bounded domain in $\Bbb R^3$ with a $C^2$ boundary, and $\nu\times\H^0$ satisfies \eqref{condMNa-J1}. Then
$\mP_n: L^2(\O,\Bbb R^3)\to \mH^\Gamma(\O,\div0)$
is a continuous projection.
\end{Lem}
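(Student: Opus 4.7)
The proof follows the template used for $\mP_\nu$ in Remark~\ref{Lem-A.2}, modified to accommodate the inhomogeneous Neumann datum $\nu\cdot\curl\H^0_T$. My plan is to establish, for each $\w\in L^2(\O,\Bbb R^3)$, four items: (i) existence and uniqueness of $\psi_\w\in\dot H^1(\O)$ solving \eqref{Neumann-curl}; (ii) the membership $\mP_n[\w]\in\mH^\Gamma(\O,\div0)$; (iii) the idempotence identity $\mP_n\circ\mP_n=\mP_n$; and (iv) a Lipschitz estimate that yields continuity.

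For (i), I would apply the Lax--Milgram theorem on the Hilbert space $\dot H^1(\O)$ equipped with the inner product $(\phi,\eta)\mapsto\int_\O\nabla\phi\cdot\nabla\eta\,dx$, which is an inner product thanks to Poincar\'e's inequality on mean-zero functions. The map $\eta\mapsto -\int_\O\w\cdot\nabla\eta\,dx+\langle\nu\cdot\curl\H^0_T,\eta\rangle_{\p\O,1/2}$ is a bounded linear functional on $\dot H^1(\O)$ provided that it vanishes on constants, which reduces to the compatibility condition $\langle\nu\cdot\curl\H^0_T,1\rangle_{\p\O,1/2}=0$; this is supplied by the hypothesis \eqref{condMNa-J1}.

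For (ii), testing the weak form of \eqref{Neumann-curl} with $\eta\in H^1_0(\O)$ yields $\div\mP_n[\w]=0$ distributionally, while testing with a general $\eta\in H^1(\O)$ identifies the normal trace as $\nu\cdot\mP_n[\w]=\nu\cdot\curl\H^0_T$ in $H^{-1/2}(\p\O)$. The component-wise flux conditions $\langle\mP_n[\w]\cdot\nu,1\rangle_{\Gamma_j,1/2}=0$ for $j=1,\dots,m+1$ then reduce to $\langle\nu\cdot\curl\H^0_T,1\rangle_{\Gamma_j,1/2}=0$ on each component separately, which I understand to be part of \eqref{condMNa-J1}. For (iii), if $\u=\mP_n[\w]$, then $\psi_\u$ solves the homogeneous Neumann problem $-\Delta\psi_\u=\div\u=0$ in $\O$, $\p\psi_\u/\p\nu=\nu\cdot\curl\H^0_T-\nu\cdot\u=0$ on $\p\O$, whose only solution in $\dot H^1(\O)$ is zero; hence $\mP_n[\u]=\u$. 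For (iv), testing the weak form with $\eta=\psi_\w$ and using Cauchy--Schwarz together with the trace estimate $\|\psi_\w\|_{H^{1/2}(\p\O)}\leq C\|\nabla\psi_\w\|_{L^2(\O)}$ gives
$$\|\nabla\psi_\w\|_{L^2(\O)}\leq \|\w\|_{L^2(\O)}+C\|\nu\cdot\curl\H^0_T\|_{H^{-1/2}(\p\O)},$$
from which the (affine) continuity $\|\mP_n[\w_1]-\mP_n[\w_2]\|_{L^2(\O)}\leq 2\|\w_1-\w_2\|_{L^2(\O)}$ is immediate.

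The main subtlety is step (ii): the boundary condition in \eqref{Neumann-curl} alone only enforces the \emph{total} flux compatibility over $\p\O$, not the vanishing of $\langle\nu\cdot\curl\H^0_T,1\rangle$ on each connected component $\Gamma_j$. This per-component vanishing, needed for the image of $\mP_n$ to lie in $\mH^\Gamma(\O,\div0)$ rather than in the larger space of $L^2$ divergence-free fields, must be imported from the hypothesis \eqref{condMNa-J1}. Once that hypothesis is unpacked in this way, the remaining steps are routine elliptic theory of the Neumann problem.
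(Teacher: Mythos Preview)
Your plan handles existence/uniqueness, idempotence, and the continuity estimate correctly, and these parts match the paper's approach. The genuine gap is in step (ii), which you yourself flag as ``the main subtlety'' but then resolve incorrectly. You write that the per-component conditions
\[
\langle\nu\cdot\curl\H^0_T,1\rangle_{H^{-1/2}(\Gamma_j),H^{1/2}(\Gamma_j)}=0,\qquad j=1,\dots,m+1,
\]
are ``part of \eqref{condMNa-J1}'' and can be ``imported from the hypothesis.'' They cannot: \eqref{condMNa-J1} only asserts the regularity $\nu\times\H^0\in T\!H^{1/2}(\p\O,\Bbb R^3)$ and $\nu\cdot\curl\H^0_T\in H^{-1/2}(\p\O)$. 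No flux condition on the individual $\Gamma_j$ is assumed.

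The point you are missing is that the per-component vanishing is not an assumption but a \emph{structural consequence} of the fact that $\nu\cdot\curl\H^0_T$ is the normal trace of a curl. The paper's argument goes as follows. Extend $\H^0_T$ to a divergence-free field $\H^0$ on $\O$ (possible by \eqref{condMNa-J1}), and for each $j$ take the harmonic function $\eta_j$ with $\eta_j=\delta_{ij}$ on $\Gamma_i$. Since $\eta_j$ is locally constant on $\p\O$, one has $\nu\times\nabla\eta_j=\0$ there. Then
\[
\langle \nu\cdot\mP_n[\w],1\rangle_{H^{-1/2}(\Gamma_j),H^{1/2}(\Gamma_j)}
= \langle \nu\cdot\curl\H^0_T,\eta_j\rangle_{\p\O,1/2}
=\int_\O\nabla\eta_j\cdot\curl\H^0\,dx
=-\langle \nu\times\nabla\eta_j,\H^0_T\rangle_{\p\O,1/2}=0.
\]
This is the missing ingredient; once you supply it, the rest of your proof goes through.
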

\begin{proof} Denote by $\H^0$ the divergence-free and tangential component preserving extension on $\H^0_T$. For any $\w\in L^2(\O,\Bbb R^3)$, $
\mP_n[\w]=\w+\nabla\psi_\w\in \mH(\O,\div0)$. Hence $\nu\cdot\mP_n[\w]\in H^{-1/2}(\p\O)$. Denote the connected components of $\p\O$ by $\Gamma_j$, $j=1,\cdots, m+1$, and let $\eta_j$ denote the harmonic function in $\O$ such that $\eta_j=\delta_{ij}$ on $\Gamma_i$. Then $\nu\times\nabla\eta_j=\0$ on $\p\O$. So
$$
\aligned
&\langle \nu\cdot\mP_n[\w],1\rangle_{H^{-1/2}(\Gamma_j),H^{1/2}(\Gamma_j)}
= \langle \nu\cdot\curl\H^0_T,\eta_j\rangle_{\p\O,1/2}
=\int_\O\nabla\eta_j\cdot\curl\H^0 dx\\
=&\int_\O\div(\H^0\times\nabla\eta_j)dx
= -\langle \nu\times\nabla\eta_j,\H^0_T\rangle_{\p\O,1/2}=0.
\endaligned
$$
Hence $\mP_n[\w]\in \mH^\Gamma(\O,\div0)$. Taking  $\psi_\w$ as a test function we have
$$
\aligned
&\int_\O (\nabla\psi_\w+\w)\cdot\nabla\psi_\w dx =\langle \nu\cdot\curl\H^0_T,\psi_\w\rangle_{\p\O,1/2}\leq \|\nu\cdot\curl\H^0_T\|_{H^{-1/2}(\p\O)}\|\psi_\w\|_{H^{1/2}(\p\O)}.
\endaligned
$$
So
$$\aligned
&\|\nabla\psi_\w\|_{L^2(\O)}^2=\int_\O(\nabla\psi_\w+\w)\cdot \nabla\psi_\w dx-\int_\O\w\cdot\nabla\psi_\w dx\\
\leq &\|\nu\cdot\curl\H^0_T\|_{H^{-1/2}(\p\O)}\|\psi_\w\|_{H^{1/2}(\p\O)}+\|\w\|_{L^2(\O)}\|\nabla\psi_\w\|_{L^2(\O)}.
\endaligned
$$
Using the trace theorem and applying the Poincar\'e inequality to $\psi\in \dot H^1(\O)$, we get
$$
\|\nabla\psi_\w\|_{L^2(\O)}\leq C_0(\O)\|\nu\cdot\curl\H^0_T\|_{H^{-1/2}(\p\O)}+\|\w\|_{L^2(\O)}.
$$
\end{proof}

\section{Derivation of the Magneto-static Equations}

\subsection{Time-dependent problem: quasi-static magnetic fields}\

\subsubsection{The Maxwell equations and our assumptions}\

We start with the general case  of magnetic and anisotropic materials, occupying a bounded $C^2$ domain in $\Bbb R^3$, which is multiply-connected  and has holes. The operators $\div$, $\curl$, $\nabla$ and $\Delta$ act on the space variables $x$.
The Maxwell equations are of the following form
\eq\label{Max}
\left\{\aligned
&\div(\var \E)=\rho,\\
&\div\B=0,\\
&-\p_t(\var\E)+\curl\H=\bold j,\\
&\p_t\B+\curl\E=\0,
\endaligned\right.
\eeq
where  $\E$ is the electric field, $\B$ is the magnetic induction, $\H$ is the magnetic field, $\bold j$ is the current density, $\rho$ is the charge density, and $\var$ is the permittivity.\footnote{From the first and third equations in \eqref{Max} we get
$$
\p_t\rho+\div\j=0,
$$
which is the conservation law of electric charge.}

\begin{Assump} (Assumption on the $\H$-$\B$ relation.) There exists a nonlinear vector-valued function $\mB(x,\z)$ such that
magnetic induction $\B$ can be represented by magnetic field $\H$:
\eq\label{eq-BH}
\B(t,x)=\mB(x,\H(t,x)).
\eeq
$\mB(x,\cdot)$ has an inverse $\mH(x,\cdot)$, which means that
\eq\label{H=HB}
\text{$\B=\mB(x,\H)$ holds if and only if $\H=\mH(x,\B)$}.
\eeq
\end{Assump}

\begin{Assump} (Assumption on the magnetic induction $\B$.)
The magnetic induction $\B(t,x)$ is such that
\eq\label{cond-1}
\B,\;\p_t\B \in L^2(0,T; L^2(\O,\Bbb R^3)),\q\forall T>0.
\eeq
\end{Assump}

\begin{Assump} (Assumption on the electric field $\E$.)
The electric field $\E(t,x)$ is such that
\eq\label{cond-3}
\E\in L^2(0,T; H^1(\O,\Bbb R^3)),\q\forall T>0.
\eeq
\end{Assump}

Later on we shall assume also the Ohm's law holds  (see for instance \cite[p.199-120]{LL}):
\begin{Assump} (Assumption on the current density $\j$.) The current density $\j$ satisfies the Ohm's law
\eq\label{Ohm}
\bold j=\j_a+\s\E,
\eeq
where $\s\geq 0$ is the electric conductivity, and $\bold j_a$ is the applied current density.
\end{Assump}

\subsubsection{Rewrite \eqref{Max} in term of $\E$ and a vector potential $\A$}\

{\it Step 1}. General case.
From \eqref{Max}(b) we have $\div\B=0$.
From the first line of \eqref{im-curl} and second line of \eqref{dec-0} we see that there exist vector fields $\w(t,x)$ and $\h_2(t,x)$, with $\w(t,\cdot)\in H^1_{n0}(\O,\div0)$ and $\h_2(t,\cdot)\in \Bbb H_2(\O)$, such that
\eq\label{decom-B}
\aligned
&\B(t,x)=\curl\w(t,x)+\h_2(t,x),\\
&\div\w(t,x)=0\q\text{in }\O,\q \nu\cdot\w(t,x)=0\q\text{on }\p\O.
\endaligned
\eeq
Since $\Bbb H_2(\O)$ is of finite dimensions, and $\curl \w(t,\cdot)$ is orthogonal to $\Bbb H_2(\O)$, it follows  that
\eq\label{cond-2}
\w,\;\p_t\w\in L^2(0,T; H^1(\O,\Bbb R^3)),\q \h_2,\;\p_t\h_2\in L^2(0,T; \Bbb H_2(\O)),\q\forall T>0.
\eeq

From \eqref{Max}(d) and \eqref{decom-B} we have
$$\aligned
\0=\p_t\B+\curl\E
=\p_t\curl\w+\p_t\h_2+\curl\E
=\curl(\E+\p_t\w)+\p_t\h_2.
\endaligned
$$
From this,  \eqref{cond-3} and \eqref{cond-2} we have, for any $t>0$,
$$-\p_t\h_2=\curl(\E+\p_t\w)\in \curl H^1(\O,\Bbb R^3)=\mH^\Gamma(\O,\div0)\subseteq [\Bbb H_2(\O)]^\perp_{L^2(\O)}.
$$
From this and the last equality in \eqref{cond-2} we have
$\p_t\h_2=\0$, so  $\curl(\E+\p_t\w)=-\p_t\h_2=\0$. By the first line of \eqref{dec-0}, there exists a scalar function $\xi(t,x)$ and a vector field $\tilde\h_1(t,x)$, with $\xi(t,\cdot)\in H^1(\O)$, $\tilde\h_1(t,\cdot)\in \Bbb H_1(\O)$, such that
\eq\label{EptW}
\E+\p_t\w=-\nabla\xi+\tilde\h_1.
\eeq
Using this, \eqref{cond-3} and \eqref{cond-2}, and since $\Bbb H_1(\O)$ is of finite dimensions and $\nabla\xi(t,\cdot)$ is orthogonal to $\Bbb H_1(\O)$, we can show that
$$
\nabla\xi\in L^2(0,T; H^1(\O,\Bbb R^3)),\q \tilde\h_1\in L^2(0,T; \Bbb H_1(\O)),\q\forall T>0.
$$

Introduce a vector field
$$\A(t,x)=\w(t,x)+\nabla\zeta(t,x),\q \text{where}\q \zeta(t,x)=\int_0^t\xi(s,x)ds.
$$
From the properties of $\w$ and $\xi$ we see that
\eq\label{cond-A}
\A,\;\p_t\A\in L^2(0,T; H^1(\O,\Bbb R^3)),\q \zeta,\;\p_t\zeta\in L^2(0,T; H^2(\O)),\q \forall T>0.
\eeq
Now we can represent $\E, \B, \H$ by $\A$ (see  \eqref{EptW}, \eqref{decom-B}, \eqref{H=HB}):
\eq\label{EB}
\left\{\aligned
&\E(t,x)=-\p_t\A(t,x)+\tilde\h_1(t,x),\\
&\B(t,x)=\curl\A(t,x)+\h_2(x),\\
&\H(t,x)=\mH(x,\curl\A(t,x)+\h_2(x)).
\endaligned\right.
\eeq
Plugging the relation $\H=\mH(x,\B)$ into \eqref{Max} (c)
we get
\eq\label{M00}
-\p_t(\var\E)+\curl [\mH(x,\curl\A+\h_2(x))]=\j.
\eeq
Finally from \eqref{Max}(a) and \eqref{EB} we have
\eq\label{pt-div-A}
-\div[\var\,(\p_t\A-\tilde\h_1)]=\rho.
\eeq

{\it Step 2}. Assume the Ohm's law \eqref{Ohm}. Then from \eqref{Max} (c) and \eqref{EB} we have
$$
-\p_t(\var\E)+\curl\H=\j=\j_a+\s\E=\j_a+\s(-\p_t\A+\tilde\h_1).
$$
Plugging the relation $\H=\mH(x,\B)$ into the above equality
we get
\eq\label{M0}
\s\p_t\A-\p_t(\var\E)+\curl [\mH(x,\curl\A+\h_2(x))]=\j_a+\s\tilde\h_1.
\eeq

\subsubsection{Quasi-static approximation}\

Next we consider a quasi-static approximation of the Maxwell equations \eqref{Max} (in the form of \eqref{M00}) by neglecting the displacement current  $\p_t(\var\E)$.

{\it Step 1. General case}.  From \eqref{M00} with the displacement current neglected we have:
\eq\label{M10}
\curl [\mH(x,\curl\A+\h_2(x))]=\j,\q t>0,\; x\in \O.
\eeq
We introduce a function $\Phi$ such that for $t\geq 0$,
\eq\label{eqPhi}
\Delta\Phi(t,x)=\div\A(t,x)\q\text{in }\O.
\eeq
Set
\eq\label{uq}
\u(t,x)=\A(t,x)-\nabla\Phi(t,x),\q  q(t,x)=-\p_t\Phi(t,x).
\eeq
From \eqref{cond-A} we have
$$\aligned
&\Phi,\; \p_t\Phi \in L^2(0,T; H^2(\O)),\q\forall T>0,\\
&\u,\; \p_t\u\in L^2(0,T; H^1(\O,\div0)),\q q\in L^2(0,T; H^2(\O)),\q\forall T>0.
\endaligned
$$
Now Eq. \eqref{M10} is transferred to
\eq\label{M20}
\left\{\aligned
&\curl [\mH(x,\curl\u+\h_2(x))]=\j,\q &t>0,\; x\in \O,\\
&\div\u=0,\q &t>0,\; x\in \O,\\
\endaligned\right.
\eeq
here $\u$ and $\j$ may depend on $t$. Eq. \eqref{pt-div-A} takes the form
\eq\label{pt-div-u}
-\div[\var\,(\p_t\u-\nabla q-\tilde\h_1)]=\rho, \q t>0,\; x\in \O.
\eeq

{\it Step 2}.
Assume the Ohm's law \eqref{Ohm}. Then a quasi-static approximation of the Maxwell equations \eqref{Max} (in the form of \eqref{M0})  gives
\eq\label{M1}
\s\p_t\A+\curl [\mH(x,\curl\A+\h_2(x))]=\j_a+\s\tilde\h_1,\q t>0,\; x\in \O.
\eeq
After introducing $\Phi(t,x)$ by \eqref{eqPhi}, and introducing $\u, \u^0, q$ by \eqref{uq}, we have \eqref{pt-div-u}, and
\eqref{M1} is transferred to
\eq\label{M2}
\left\{\aligned
&\s\p_t\u+\curl [\mH(x,\curl\u+\h_2(x))]=\j_a+\s\tilde\h_1+\s\nabla q,\q &t>0,\; x\in \O,\\
&\div\u=0,\q &t>0,\; x\in \O.
\endaligned\right.
\eeq
In the special case when $\s$ and $\var$ are positive constants, from  \eqref{pt-div-u} and \eqref{M2} we get
\eq\label{ja-rho}
-\div \j_a={\s\over\var}\rho,\q t>0,\; x\in \O.
\eeq

\subsection{Time-independent problems: magneto-static fields}\

{\it Step 1}. Assume that
\eq\label{cond-ind-t0}
\text{$\u,\; \bold j$\; are independent of $t$.}
\eeq
Then the steady state problem of \eqref{M20} takes the following form:
\eq\label{M30}
\left\{\aligned
&\curl [\mH(x,\curl\u+\h_2(x))]=\bold j\q &\text{in }\O,\\
&\div\u=0\q &\text{in }\O.
\endaligned\right.
\eeq
This equation is different to \eqref{M20} in the sense that in \eqref{M30} $\u$ and $\j$ are independent of $t$.
 \eqref{M30} should be coupled with \eqref{pt-div-u}, which takes the form
\eq\label{Lapla-p0}
\div[\var(\nabla q+\tilde\h_1)]=\rho\q\text{in }\O.
\eeq

{\it Step 2}. Assume the Ohm's law \eqref{Ohm}, and assume
\eq\label{cond-ind-t}
\text{$\u,\; \tilde\h_1,\; \bold j_a,\; q,\; \var,\; \rho$\; are independent of $t$,\;\; $\s$  is a positive constant.}
\eeq
Let us introduce $p, \h_1$ by
$$\s q(x)=p(x),\q \s\tilde\h_1(x)=\h_1(x).
$$
Since $\p_t\Phi=-q(x)=-p(x)/\s$, there exists a function $\Phi_0(x)$ such that
\eq\label{Phi-p}
\Phi(x,t)=\Phi_0(x)-{t\over\s}p(x).
\eeq
Now the relations between $\A, \E, \B, \H, q$ and  $\u, \h_1, \h_2, \Phi_0, p$ are the following:
\eq\label{EB2}
\left\{\aligned
&\A(t,x)=\u(x)+\nabla\Phi_0(x)-{t\over\s}\nabla p(x),\\
&\E(x)={1\over\s}\nabla p(x)+{1\over \s}\h_1(x),\\
&\B(x)=\curl\u(x)+\h_2(x),\\
&\H(x)=\mH(x,\curl\u(x)+\h_2(x)),\\
&q(x)={1\over\s}p(x).
\endaligned\right.
\eeq
The steady state problem of \eqref{M2} takes the following form:
\eq\label{M3}
\left\{\aligned
&\curl [\mH(x,\curl\u+\h_2(x))]=\j_a+\h_1(x)+\nabla p\q &\text{in }\O,\\
&\div\u=0\q &\text{in }\O.
\endaligned\right.
\eeq
Now the Gauss's law of electricity \eqref{pt-div-u} is reduced to \eqref{ja-rho}.
\eq\label{Lapla-p}
\div[{\var\over\s}(\nabla p+\h_1)]=\rho\q\text{in }\O,
\eeq
and \eqref{Max} is reduced to \eqref{M3}-\eqref{Lapla-p}.  $\rho$ should not be arbitrarily given, instead, it is a function determined by  $\h_1$ and $p$.

So we conclude that:

\begin{Lem}\label{Lem-conclusion3} Assume the assumptions 3.1, 3.2, 3.3.
\begin{itemize}
\item[(i)] After neglecting the displacement current $\p_t(\var\E)$ from the Maxwell equations \eqref{Max}, and introducing the magnetic potential $\u$, and assume $\u$ and $\j$ are independent of $t$, then \eqref{Max} is reduced to \eqref{M30}-\eqref{Lapla-p0}.
\item[(ii)] Assume furthermore the Ohm's law \eqref{Ohm} holds, $\s$ is a positive constant, and  assume $\u, \tilde\h_1, \j_s, q, \var, \rho$ are independent of $t$.
 \begin{itemize}
 \item[(a)] Solution $(\E,\B,\H)$ of \eqref{Max} is represented  by \eqref{EB2}, and \eqref{Max} is reduced to \eqref{M3}-\eqref{Lapla-p}.
\item[(b)] If  $\j_a$ and $\rho$ are independent of $\u$, then $\rho$ is determined by $\j_a$ by \eqref{ja-rho}, and the magneto-static problem is reduced to a single system \eqref{M3}.
\end{itemize}
\end{itemize}
\end{Lem}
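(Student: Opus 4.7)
The plan is to collect the derivations spread through Sections 3.1--3.2 into a coherent argument for each part. For (i), I would retrace Step 1 of the quasi-static derivation with the displacement current $\p_t(\var\E)$ neglected and with time-independence imposed at the end. The three key ingredients are: first, the decomposition $\B=\curl\w+\h_2$ of \eqref{decom-B}, which comes from $\div\B=0$ together with \eqref{im-curl} and the second line of \eqref{dec-0}; second, Faraday's law \eqref{Max}(d) rewritten via $\B=\curl\w+\h_2$ as $\curl(\E+\p_t\w)=-\p_t\h_2$, which combined with the orthogonality $\curl H^1(\O,\Bbb R^3)\perp \Bbb H_2(\O)$ from \eqref{im-curl} and \eqref{dec-0} forces $\p_t\h_2=\0$ and hence yields $\xi$ and $\tilde\h_1$ satisfying \eqref{EptW}; and third, the gauge substitution $\u=\A-\nabla\Phi$ with $\Phi$ solving \eqref{eqPhi}, which secures $\div\u=0$. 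Substituting $\H=\mH(x,\B)$ into Amp\`ere's law with $\p_t(\var\E)$ dropped yields \eqref{M10}, which becomes \eqref{M30} once $\u$ and $\j$ are taken $t$-independent; Gauss's law \eqref{pt-div-A} rewritten in $\u$ and $q$ gives \eqref{pt-div-u}, and specializing to the time-independent case produces \eqref{Lapla-p0}.

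For (ii)(a) I would repeat the above derivation while carrying the Ohm decomposition \eqref{Ohm} through the calculation, so that after the quasi-static approximation the right-hand side of Amp\`ere's law acquires $-\s\p_t\A+\s\tilde\h_1$, leading to \eqref{M2}. The critical new step is that under the hypothesis that $q=-\p_t\Phi$ is independent of $t$, integration in $t$ yields $\Phi(t,x)=\Phi_0(x)-(t/\s)p(x)$ with $p=\s q$, which is \eqref{Phi-p}; this is precisely what makes $\u=\A-\nabla\Phi$ compatible with the assumption that $\u$ be independent of $t$. The representation \eqref{EB2} then follows by direct substitution into the identities already established for $\A,\E,\B,\H$; equation \eqref{M2} specializes to \eqref{M3} after setting $\h_1=\s\tilde\h_1$, and \eqref{pt-div-u} specializes to \eqref{Lapla-p}.

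For (ii)(b) I would take the divergence of \eqref{M3}. Using $\div\curl=0$ and $\div\h_1=0$ (from $\h_1\in\Bbb H_1(\O)$), one obtains $\Delta p=-\div\j_a$, while \eqref{Lapla-p} with $\var,\s$ constant reads $(\var/\s)\Delta p=\rho$. Comparing the two gives \eqref{ja-rho}, which shows that $\rho$ is forced by $\j_a$ and that \eqref{Lapla-p} is automatically satisfied once \eqref{M3} has a solution $(\u,p)$; consequently the coupled system collapses to the single equation \eqref{M3}.

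The main obstacle is the bookkeeping in part (ii)(a): one must verify that the various decomposition terms $\w,\h_2,\xi,\tilde\h_1,\Phi$ produced in the general time-dependent construction can all simultaneously be chosen $t$-independent once $\u,\tilde\h_1,\j_a,q,\var,\rho$ are, and that the $\Phi_0$ appearing in \eqref{Phi-p} is unambiguous up to an additive constant that disappears under $\nabla$. Beyond this consistency check, each step is a routine specialization of the constructions in Sections 3.1.2--3.1.3, so no additional analytic input is required.
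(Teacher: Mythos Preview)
Your proposal is correct and follows exactly the paper's own approach: the lemma is stated in the paper as a summary (``So we conclude that:'') of the derivations carried out step by step in Sections~3.1.2--3.2, and your plan simply retraces those same computations in the same order. The only minor difference is that for (ii)(b) the paper obtains \eqref{ja-rho} directly from \eqref{pt-div-u} and \eqref{M2} before passing to the static limit, whereas you take the divergence of the static equation \eqref{M3} and compare with \eqref{Lapla-p}; the two arguments are equivalent and equally short.
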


\begin{Rem} Assume the Ohm's law \eqref{Ohm} and assume \eqref{cond-ind-t}.

\begin{itemize}
\item[(i)]  Equation \eqref{M3} indicates the relation between magnetic field $\H$, electric field $\E$, and electric current $\j=\s\E+\j_a$:
$$
\curl\H=\j=\j_a+\sigma \E.
$$

\item[(ii)] (About the topological effects on electromagnetism.)  \eqref{EB2} and \eqref{M3} show that the electromagnetic fields depend on $\h_1, \h_2$ which represents  topology of the domain. \eqref{EB2} indicates that  $\E$ contains $\h_1$ and $\B$ contains $\h_2$  explicitly, which suggests that $\E$ is more directly linked to simply-connectedness of $\O$, and $\H$ is more directly linked to connectedness of $\p\O$.

\item[(iii)] (About the physical meaning of $\nabla p$ in \eqref{M3}.) Mathematically we may say that, the gradient term $\nabla p$ is necessarily introduced into the equation  in \eqref{M3} to balance the equality, which is necessary for solvability of the boundary value problem. On the other hand, from \eqref{EB2} and \eqref{Ohm} we see that
$$
\nabla p=\s\E-\h_1=\bold j-\bold j_a,
$$
so $\nabla p$ represents both electric field $\E$ and domain topology.
\end{itemize}
\end{Rem}

\begin{Rem} In \cite{P4} we considered the following system
\eq\label{M4}
\curl [\mH(x,\curl\u)]=\bold j_a,\q \div\u=0\q \text{\rm in }\O.
\eeq
This is a particular case of \eqref{M3} without the topological effect terms $\h_1, \h_2$, and without the potential term $\nabla p$, where $\j_a$ is either given  or $\j_a=\f(x,\u)$.
It was shown in \cite{P4} that \eqref{M4} under some Dirichlet boundary condition may not be well-posed, and a potential term should be introduced to balance the equation:
\eq\label{M5}
\curl [\mH(x,\curl\u)]=\bold j_a+\nabla p,\q \div\u=0\q \text{\rm in }\O.
\eeq
The boundary condition for $p$ considered in \cite{P4} is Dirichlet type if $\O$   has no holes, and Neumann type if $\O$ has holes. Now we can say that $\nabla p$ gives the electric field $\E$, hence a Dirichlet boundary condition on $p$ is to prescribe the tangential component of $\E$, and a Neumann boundary condition on $p$ is to prescribe the normal component of $\E$.

\eqref{M3} is a better model than \eqref{M5} in the following sense. \eqref{M5} says that magnetic field (described by $\u$) and electric field (described by $\nabla p$) depend on each other, \eqref{M3} says that magnetic field (described by $\u$) and electric field (described by $\nabla p$ and $\h_1$) depend on each other, and both depend on domain topology (described by $\h_1, \h_2$).
\end{Rem}

\subsection{BVPs where the current is given}\

Assume $\bold j_a(x)$ is given and independent of $\u, \h_1, \h_2$.
As mentioned in Lemma \ref{Lem-conclusion3}, we should assume $\rho$ is determined by $p, \h_1$ by \eqref{Lapla-p}.
Then \eqref{Max} is reduced to a single system \eqref{M3}.
If we write $\bold j_a(x)=\J(x),$
then the first equation in \eqref{M3} can be written as follows:
$$
\curl [\mH(x,\curl\u+\h_2(x))]=\J(x)+\h_1(x)+\nabla p\q \text{in }\O.
$$
Under a suitable boundary value condition, the unknown $p(x)$ can be solved independent of $\u$. In this case, replacing $\J+\nabla p$ by $\J$, the equation is in the form of \eqref{M6}.
Then we may pose one of the following boundary conditions for $\u$ on $\p\O$:
\begin{itemize}
\item[(1)] Dirichlet boundary condition $\u_T=\u^0_T$;
\item[(2)] Tangential curl boundary condition $\nu\times\curl\u=\nu\times\B^0$;
\item[(3)] Normal curl boundary condition $\nu\cdot\curl\u+\nu\cdot\h_2(x)=B^0_n$;
\item[(4)] Natural boundary condition $\nu\times\mH(x,\curl\u+\h_2(x))=\nu\times\H^0$;
\item[(5)] Co-normal boundary condition $\nu\cdot\mH(x,\curl\u+\h_2(x))=H^0_n$;
\end{itemize}

\subsection{BVPs where the current depends on magnetic induction}\

Assume $\bold j_a$ depends on the magnetic induction $\B=\curl\u+\h_2.$
Then we may write $\bold j_a=\f(x,\curl\u+\h_2),$
and \eqref{M3} takes the form of \eqref{M7}.
We may pose one of the above mentioned boundary conditions for $\u$, and either Dirichlet or Neumann condition for $p$. Other type boundary conditions are also possible. However, following the discussions in \cite{P4}, we have the following observations.
\begin{itemize}
\item[(1)] If $\O$ has no holes, then BVP of \eqref{M7} with Dirichlet boundary condition for $p$ (and with a suitable boundary condition on $\u$) is well-posed.
\item[(2)] If $\O$ has holes, then BVP of \eqref{M7} with Neumann boundary condition for $p$ (and with a suitable boundary condition on $\u$) is well-posed, but the problem with Dirichlet boundary condition for $p$ is not well-posed.
\end{itemize}

\section{Dirichlet BVP of the Maxwell System with a Given Current}

In this section we consider the Dirichlet BVP of the Maxwell system with a given current $\J$:
\eq\label{MD-J}
\left\{\aligned
&\curl [\mH(x,\curl\u+\h_2(x))]=\J(x)+\h_1(x)\q &\text{in }\O,\\
&\div\u=0\q &\text{in }\O,\\
&\u_T=\u^0_T\q & \text{on }\p\O.
\endaligned\right.
\eeq
In \eqref{MD-J} there are no boundary conditions for $\h_1$ and $\h_2$, and we may view $\h_1, \h_2$ as parameters. If $\u$ is a solution of \eqref{MD-J}, then for any $\h\in \Bbb H_2(\O)$, $\u+\h$ is also a solution of \eqref{MD-J}. To avoid non-uniqueness due to $\Bbb H_2(\O)$, we may require the solution $\u\perp_{L^2(\O)}\Bbb H_2(\O).$
As in \cite[Lemma 2.1]{P3} we can show that the condition $\J\in \mH^\Gamma(\O,\div0)$ is necessary for \eqref{MD-J} to have a weak solution.
Throughout this section we assume that
\eq\label{cond-MJ}
\aligned
&\text{$\O$ is a bounded domain in $\Bbb R^3$ with a $C^2$ boundary,}\\
&\text{$\mH(x,\z)$ satisfies $(H_1), (H_2), (H_3)$,}\q
\bold J\in \mH^\Gamma(\O,\div0),\\
&\h_1\in\Bbb H_1(\O),\q \h_2\in\Bbb H_2(\O),
\endaligned
\eeq
and
\eq\label{condMD-J}
\u^0_T\in T\!H^{1/2}(\p\O,\Bbb R^3),\q \nu\cdot\curl\u^0_T\in H^{-1/2}(\p\O).
\eeq
\eqref{condMD-J} implies that there exists a divergence-free and curl-minimizing extension $\mU$ of $\u^0_T$, see \cite{P2}. Set $\bold b=\curl\mU$ and $\bv=\u-\mU$. Then \eqref{MD-J} can be written as follows:
\eq\label{MD-J0}
\left\{\aligned
&\curl [\mH(x,\curl\bv+\bold b+\h_2)]=\J(x)+\h_1(x)\q &\text{in }\O,\\
&\div\bv=0\q &\text{in }\O,\\
&\bv_T=\0\q & \text{on }\p\O.
\endaligned\right.
\eeq

In the following we derive  solvability of \eqref{MD-J} by variational methods, monotone operator methods, and the reduction methods. See Proposition 5.7 in \cite{P4}) for  solvability by compact operator methods.

\subsection{Solvability of \eqref{MD-J} by variational methods}\

\begin{Prop}\label{Prop-min-MD-J} Let $\O, \J, \h_1,\h_2,\u^0_T$ satisfy \eqref{cond-MJ} and \eqref{condMD-J}, $\mH(x,\z)=\nabla_\z P(x,\z)$ for a function $P$ satisfying
\begin{itemize}
\item[$(P_1)$] $P\in C^{1,\a}_{\loc}(\bar\O\times\Bbb R^3)$, $P(x,\z)$ is strictly convex in $z$, and there exist positive constants $c_1, c_2, c_3$,  functions $p_1\in L^1(\O)$ and $p_2\in L^2(\O)$ such that
$$c_1|\z|^2-p_1(x)\leq P(x,\z)\leq c_2|\z|^2+p_1(x),\q |\nabla_\z P(x,\z)|\leq c_3|\z|+p_2(x).
$$
\end{itemize}
Then  \eqref{MD-J} has a solution $\u\in H^1(\O,\div0)$.
\end{Prop}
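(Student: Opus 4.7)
The plan is to apply the direct method of the calculus of variations to the shifted system \eqref{MD-J0}, then recover a solution of \eqref{MD-J} by setting $\u=\bv+\mU$. First I would fix the space
$$\Bbb X=\{\bv\in H^1_{t0}(\O,\div0):\ \bv\perp_{L^2(\O)}\Bbb H_2(\O)\},$$
which is a closed subspace of $H^1(\O,\Bbb R^3)$, and note that on $\Bbb X$ the divergence--curl--gradient inequality, together with a compactness/contradiction argument using that the kernel of $\curl$ inside $H^1_{t0}(\O,\div0)$ is exactly the finite dimensional space $\Bbb H_2(\O)$, yields a Poincar\'e-type bound
$$\|\bv\|_{H^1(\O)}\leq C(\O)\|\curl\bv\|_{L^2(\O)},\qquad \forall\,\bv\in\Bbb X.$$

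On $\Bbb X$ I would then minimize the functional
$$F(\bv)=\int_\O P(x,\curl\bv+\bold b+\h_2)\,dx-\int_\O(\J+\h_1)\cdot\bv\,dx.$$
Using $(P_1)$ and the Poincar\'e-type inequality above, $F$ is bounded below and coercive on $\Bbb X$: the quadratic lower bound on $P$ controls $\|\curl\bv\|_{L^2}^2$ up to constants, which in turn controls $\|\bv\|_{H^1}^2$, and this dominates the linear term after Cauchy--Schwarz. Strict convexity of $P(x,\cdot)$ makes $\bv\mapsto\int_\O P(x,\curl\bv+\bold b+\h_2)\,dx$ convex on $\Bbb X$, hence weakly lower semicontinuous in $H^1$, while the linear term is weakly continuous. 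The direct method therefore produces a minimizer $\bv\in\Bbb X$.

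To recover the equation, I would compute the first variation $\frac{d}{dt}\big|_{t=0}F(\bv+t\phi)$ for $\phi\in\Bbb X$, obtaining
$$\int_\O \mH(x,\curl\bv+\bold b+\h_2)\cdot\curl\phi\,dx=\int_\O(\J+\h_1)\cdot\phi\,dx.$$
This identity extends first to all $\phi\in H^1_{t0}(\O,\div0)$: decompose $\phi=\phi_\perp+\h$ with $\phi_\perp\in\Bbb X$ and $\h\in\Bbb H_2(\O)$; the $\h$-piece contributes zero on the left because $\curl\h=\0$, and zero on the right because $\J\in\mH^\Gamma(\O,\div0)\perp_{L^2}\Bbb H_2(\O)$ by \eqref{dec-0} and $\Bbb H_1(\O)\perp_{L^2}\Bbb H_2(\O)$. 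To pass to arbitrary $\phi\in C_c^\infty(\O,\Bbb R^3)$ I would use the Helmholtz decomposition $\phi=\phi_d+\nabla\chi$, where $\chi\in H^2\cap H^1_0(\O)$ solves $\Delta\chi=\div\phi$ with $\chi|_{\p\O}=0$: then $\phi_d\in H^1_{t0}(\O,\div0)$ (because $\chi|_{\p\O}=0$ kills the tangential component of $\nabla\chi$ on $\p\O$), $\curl\phi_d=\curl\phi$, and $\int_\O(\J+\h_1)\cdot\nabla\chi\,dx=0$ by integration by parts since $\J+\h_1$ is divergence free and $\chi$ vanishes on the boundary. This yields the first equation of \eqref{MD-J0} in the distributional sense, and $\u=\bv+\mU$ satisfies the remaining conditions by construction of the extension $\mU$.

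The main obstacle I expect is establishing the Poincar\'e-type inequality on $\Bbb X$: one must identify $\Bbb H_2(\O)$ as the precise kernel to remove and invoke the div-curl-gradient theory of \cite{DaL,GR,W} in the correct functional setting. Once that inequality is granted, the remainder of the plan is a routine direct-method/Euler--Lagrange computation, combined with the orthogonality structure of $L^2(\O,\Bbb R^3)$ that has already been set up in Section~2.
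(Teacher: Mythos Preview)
Your proposal is correct and follows essentially the same variational strategy as the paper: minimize the energy over $\Bbb X=H^1_{t0}(\O,\div0)\cap\Bbb H_2(\O)^\perp$, derive the Euler--Lagrange identity there, and then extend to all of $H^1_{t0}(\O,\div0)$ using $\J+\h_1\perp_{L^2}\Bbb H_2(\O)$.

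The one genuine difference is in the final passage from test functions in $H^1_{t0}(\O,\div0)$ to the distributional equation. The paper invokes a De Rham--type lemma (from \cite{P4,P5}) to produce a pressure $p\in L^{2,-1/2}_0(\O)$ with $\curl[\mH(x,\curl\bv+\bold b+\h_2)]-\J-\h_1=\nabla p$, and then kills $p$ by testing against $\nabla\zeta$ with $\Delta\zeta=p$, $\zeta|_{\p\O}=0$. You instead project each $\phi\in C_c^\infty(\O,\Bbb R^3)$ onto $H^1_{t0}(\O,\div0)$ via $\phi=\phi_d+\nabla\chi$ with $\Delta\chi=\div\phi$, $\chi|_{\p\O}=0$, and observe that the gradient piece contributes nothing because $\J+\h_1$ is divergence free and $\chi$ vanishes on the boundary. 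These are dual formulations of the same mechanism; your route is slightly more elementary in that it avoids citing the De Rham lemma, while the paper's route has the advantage of making explicit why no pressure term survives.
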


\begin{proof}
Condition $(P_1)$ here is slightly more general than the condition $(P)$ in \cite{P4}.
By the argument in the proof of \cite[Proposition 3.1]{P4} we know that the energy functional has a minimizer $\u$ in $H^1_{t0}(\O,\div0)\cap\Bbb H_2(\O)^\perp$.
Hence for all $\w\in H^1_{t0}(\O,\div0)\cap\Bbb H_2(\O)^\perp$ it holds that
\eq\label{wk-PF}
\int_\O\{\nabla_\z P(x,\curl\bv +\bold b+\h_2)\cdot\curl\w-(\J-\h_1)\cdot\w\}dx=0.
\eeq
This equality remains true for all $\w\in H^1_{t0}(\O,\div0)$ because $\J+\h_1$ is perpendicular to $\Bbb H_2(\O)$.
So we can use a variant form of the De Rham lemma for functionals vanishing on $H^1_{t0}(\O,\div0)$ (see  \cite[Lemma 2.2]{P4} and \cite[Lemma 4.2]{P5}) to conclude that there exists $p\in L^{2,-1/2}(\O)$ such that the equality
$$\curl[\nabla_\z P(x,\curl\bold v+\bold b+\h_2)]-\J-\h_1=\nabla p
$$
holds as elements in $H^{1,*}_{t0}(\O,\Bbb R^3)$,  and  $\g(p)=0$ on $\p\O$. This means that
$$
\int_\O\{\nabla_\z P(x,\curl\bv +\bold b+\h_2)\cdot\curl\w-(\J-\h_1)\cdot\w\}dx=-\int_\O p\,\div\w dx\q\forall \w\in H^1_{t0}(\O,\Bbb R^3).
$$
Let $\zeta$ be such that
$$
\Delta \zeta=p\q\text{in }\O,\q \zeta=0\q\text{on }\p\O.
$$
Then $\w=\nabla\zeta\in H^1_{t0}(\O,\Bbb R^3)$. Plugging it into the above equality we find
$$
\int_\O p^2dx=\int_\O p\,\div\nabla\zeta dx=\int_\O(\J+\h_1)\cdot\nabla\zeta dx=0
$$
because $\div(\J+\h_1)=0$ in $\O$ and $\zeta=0$ on $\p\O$. Hence $p=0$.
So $\bold v$ is a weak solution of \eqref{MD-J0}.
\end{proof}

\begin{Cor}\label{Cor-quasi-MD-J} Let $\O, \J, \h_1,\h_2,\u^0_T$ satisfy \eqref{cond-MJ} and \eqref{condMD-J}, $\mH(x,\z)=a(x,|\z|^2)\z$, where $a(s,x)$ satisfies the following condition:
\begin{itemize}
\item[$(a)$] $a\in  C^1_{\loc}(\bar\O\times\Bbb R^3)$, $\nabla_x a\in C^\a_{\loc}(\bar\O\times\Bbb R^3,\Bbb R^3)$ with $0<\a<1$,   and there exist positive constants $\lam, \Lambda$ and $\delta\in (0,1)$ such that
\eq\label{cond-a}
\lam\leq a(t,x)\leq \Lambda,\q {\p a\over\p s}(s,x)|s|\geq -{1-\delta\over 2}a(x,z).
\eeq
\end{itemize}
Then \eqref{MD-J} has a weak solution $\u\in H^1(\O,\div0)$.
\end{Cor}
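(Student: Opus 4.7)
The plan is to reduce Corollary~4.2 to Proposition~\ref{Prop-min-MD-J} by exhibiting a potential $P(x,\z)$ whose $\z$-gradient is $\mH(x,\z)=a(x,|\z|^2)\z$ and verifying hypothesis $(P_1)$. The natural candidate is
\[
P(x,\z)=\tfrac12\int_0^{|\z|^2} a(x,s)\,ds,
\]
which yields $\nabla_\z P(x,\z)=a(x,|\z|^2)\z=\mH(x,\z)$. Once $(P_1)$ is checked, Proposition~\ref{Prop-min-MD-J} immediately gives a weak solution $\u\in H^1(\O,\div0)$ of \eqref{MD-J}, and we are done.

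First I would verify the regularity and bilateral growth. The hypothesis $a\in C^1_{\loc}(\bar\O\times\R)$ with $\nabla_x a\in C^\a_{\loc}$ implies $P\in C^{1,\a}_{\loc}(\bar\O\times\R^3)$. The uniform bounds $\lam\le a\le \Lam$ give immediately
\[
\tfrac{\lam}{2}|\z|^2 \le P(x,\z)\le \tfrac{\Lam}{2}|\z|^2,\qq |\nabla_\z P(x,\z)|=|a(x,|\z|^2)\z|\le \Lam|\z|,
\]
so the growth bounds in $(P_1)$ hold with $c_1=\lam/2$, $c_2=\Lam/2$, $c_3=\Lam$, $p_1\equiv 0$, $p_2\equiv 0$.

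The main obstacle is strict (in fact uniform) convexity of $P(x,\cdot)$, since $\partial a/\partial s$ may be negative. I would compute the Hessian
\[
\frac{\p^2 P}{\p\z_i\p\z_j}(x,\z)=2\frac{\p a}{\p s}(x,|\z|^2)\z_i\z_j+a(x,|\z|^2)\delta_{ij},
\]
so for any $\xi\in\R^3$,
\[
\langle \nabla_\z^2 P(x,\z)\xi,\xi\rangle = 2\frac{\p a}{\p s}(x,|\z|^2)\langle\z,\xi\rangle^2+a(x,|\z|^2)|\xi|^2.
\]
When $\partial a/\partial s\ge 0$ at $(x,|\z|^2)$ this is $\ge \lam|\xi|^2$. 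When $\partial a/\partial s<0$ I would apply Cauchy--Schwarz, $\langle\z,\xi\rangle^2\le |\z|^2|\xi|^2$, which reverses to
\[
2\frac{\p a}{\p s}(x,|\z|^2)\langle\z,\xi\rangle^2 \ge 2\frac{\p a}{\p s}(x,|\z|^2)|\z|^2|\xi|^2,
\]
and then invoke the structural hypothesis \eqref{cond-a}, namely $2\frac{\p a}{\p s}(x,s)\,s\ge -(1-\delta)a(x,\z)$ with $s=|\z|^2$, to conclude
\[
\langle \nabla_\z^2 P(x,\z)\xi,\xi\rangle \ge \bigl[-(1-\delta)+1\bigr]a(x,|\z|^2)|\xi|^2 \ge \delta\lam|\xi|^2.
\]
This uniform ellipticity of the Hessian yields the strict convexity of $P(x,\cdot)$ required by $(P_1)$.

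Having verified $(P_1)$, Proposition~\ref{Prop-min-MD-J} applies directly and produces a weak solution $\u\in H^1(\O,\div0)$ of \eqref{MD-J}, which is the conclusion of the corollary. The only real work is the convexity argument above; the remaining items in $(P_1)$ are bookkeeping consequences of the boundedness of $a$ and the smoothness of $a$ in both variables.
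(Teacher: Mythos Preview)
Your proof is correct and follows essentially the same approach as the paper: you define the potential $P(x,\z)=\tfrac12\int_0^{|\z|^2}a(x,s)\,ds$, verify $(P_1)$ (in particular the strict convexity via the Hessian computation, splitting into the cases $\partial a/\partial s\ge 0$ and $\partial a/\partial s<0$ and using \eqref{cond-a} in the latter), and then invoke Proposition~\ref{Prop-min-MD-J}. The only cosmetic difference is your inclusion of the factor $\tfrac12$, which makes $\nabla_\z P=\mH$ exactly rather than up to a constant.
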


\begin{proof} The proof is similar to the proof of Corollary 3.2 in \cite{P4}.
Let
$c(t,x)=\int_0^{|t|} a(s,x)ds$, $P(x,\z)=c(x,|\z|^2)$.
From $(a)$ we see that $P(x,\z)$ satisfies $(P_1)$. In particular,
$$\aligned
&\sum_{i,j=1}^3{\p^2 P\over\p x_i\p x_j}(x,\z)\xi_i\xi_j=2a(x,|\z|^2)|\xi|^2+4{\p a\over \p s}(x,|\z|^2)(\z\cdot\xi)^2\\
&\q\geq \begin{cases} 2a(x,|\z|^2)|\xi|^2\q&\text{if } {\p a\over\p s}(x,|\z|^2)\geq 0,\\
                     2|\xi|^2\{a(x,|\z|^2)-2|{\p a\over\p s}(x,|\z|^2)||\z|^2\}\geq 2\delta|\xi|^2a(x,|\z|^2)\q&\text{if } {\p a\over \p s}(x,|\z|^2)<0.\end{cases}
\endaligned
$$
So $P(x,\z)$ is strictly convex in $\z$.
Hence the conclusion follows from Proposition \ref{Prop-min-MD-J}.
\end{proof}

\subsection{Solvability of \eqref{MD-J} by monotone operator methods}\

\begin{Thm}\label{Thm-Mono-MD-J} Assume $\O, \J, \h_1, \h_2, \u^0_T$ satisfy \eqref{cond-MJ}, \eqref{condMD-J}, and $\mH$ satisfies $(H_1), (H_2)$. Then \eqref{MD-J} has a unique solution $\u\in H^1(\O,\div0)\cap \Bbb H_2(\O)^\perp_{L^2(\O)}$. Moreover, the solution map $(\h_1,\J)\mapsto \u$ is continuous from $\Bbb H_1(\O)\times \mH^\Gamma(\O,\div0)$ to $H^1(\O,\div0)$.
\end{Thm}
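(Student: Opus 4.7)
The plan is to reduce to the homogeneous boundary problem \eqref{MD-J0}, recast it as a monotone-operator equation on a Hilbert space, apply Minty--Browder, and recover the original PDE by the same de Rham argument used in the variational proof. Using the extension $\mU$ of $\u^0_T$ with $\bold b = \curl\mU$ already introduced in the excerpt, I would seek $\bv$ in
$$
V := H^1_{t0}(\O,\div0)\cap\Bbb H_2(\O)^\perp_{L^2(\O)}.
$$
On $V$ the divergence-curl-gradient inequality yields $\|\bv\|_{H^1(\O)}\leq C\|\curl\bv\|_{L^2(\O)}$, since $\div\bv=0$, $\bv_T=\0$, and the $\Bbb H_2(\O)$-component of $\bv$ vanishes; hence $\|\bv\|_V:=\|\curl\bv\|_{L^2(\O)}$ is an equivalent Hilbert-space norm on $V$.

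I would then define $A:V\to V^*$ and $F\in V^*$ by
$$
\langle A\bv,\w\rangle=\int_\O\mH(x,\curl\bv+\bold b+\h_2)\cdot\curl\w\,dx,\qquad \langle F,\w\rangle=\int_\O(\J+\h_1)\cdot\w\,dx,
$$
and solve $A\bv=F$. Boundedness on bounded sets follows from the linear growth of $\mH$ in $(H_1)$, hemicontinuity from the $C^1$ regularity in $(H_2)$, and the crucial strong monotonicity
$$
\langle A\bv_1-A\bv_2,\bv_1-\bv_2\rangle \geq \mu\,\|\curl(\bv_1-\bv_2)\|_{L^2(\O)}^2 = \mu\,\|\bv_1-\bv_2\|_V^2
$$
is exactly property $(H_4)$, which the excerpt records as a consequence of $(H_2)$. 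Coercivity of $\langle A\bv,\bv\rangle/\|\bv\|_V$ comes from the lower bound in $(H_1)$ together with the $L^2$ control on the inhomogeneous shift $\mH(x,\bold b+\h_2)$. The Minty--Browder theorem then produces a unique $\bv\in V$ with $A\bv=F$.

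To pass from this identity on $V$ to the PDE, I would first note that $\J\in\mH^\Gamma(\O,\div0)$ and $\h_1\in\Bbb H_1(\O)\subset\mH^\Gamma(\O,\div0)$ both lie in $\Bbb H_2(\O)^\perp_{L^2(\O)}$, so $F$ annihilates $\Bbb H_2(\O)$ and the equation $A\bv=F$ is automatically valid against every $\w\in H^1_{t0}(\O,\div0)$. The variant of the de Rham lemma invoked in the proof of Proposition \ref{Prop-min-MD-J} (\cite[Lemma 2.2]{P4}) then supplies $p\in L^{2,-1/2}(\O)$ with $\g(p)=0$ on $\p\O$ such that
$$
\curl[\mH(x,\curl\bv+\bold b+\h_2)]-\J-\h_1=\nabla p,
$$
and the same test-function trick as in that proof — choose $\zeta$ with $\Delta\zeta=p$ in $\O$, $\zeta=0$ on $\p\O$, and use $\div(\J+\h_1)=0$ — forces $\int_\O p^2\,dx=0$, so $p\equiv 0$. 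Taking $\u=\bv+\mU$ minus, if necessary, its $\Bbb H_2(\O)$-projection (which alters neither the curl nor the tangential trace) delivers the unique solution in $H^1(\O,\div0)\cap\Bbb H_2(\O)^\perp_{L^2(\O)}$; continuity of $(\h_1,\J)\mapsto\u$ then follows from strong monotonicity via
$$
\mu\,\|\bv-\bv'\|_V^2 \leq \langle F-F',\bv-\bv'\rangle \leq \|F-F'\|_{V^*}\|\bv-\bv'\|_V.
$$
The main obstacle I anticipate is this last recovery step — verifying that the de Rham multiplier $p$ really vanishes and that uniqueness is obtained in $H^1(\O,\div0)\cap\Bbb H_2(\O)^\perp_{L^2(\O)}$ rather than merely modulo $\Bbb H_2(\O)$ — rather than the abstract monotone-operator input, which is tailored to the hypotheses $(H_1), (H_2)$.
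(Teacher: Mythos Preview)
Your proposal is correct and follows essentially the same route as the paper's proof: the same space $V=X=H^1_{t0}(\O,\div0)\cap\Bbb H_2(\O)^\perp_{L^2(\O)}$ with the curl norm, the same monotone operator built from $\mH(x,\curl\bv+\bold b+\h_2)$, Browder--Minty for surjectivity, the extension of the test-function identity from $V$ to all of $H^1_{t0}(\O,\div0)$ via $\J+\h_1\perp\Bbb H_2(\O)$, and the de Rham step with $p=0$ exactly as in Proposition~\ref{Prop-min-MD-J}. Your explicit remark about subtracting the $\Bbb H_2(\O)$-projection of $\mU$ to place $\u$ in $\Bbb H_2(\O)^\perp_{L^2(\O)}$ is a detail the paper leaves implicit.
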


\begin{proof} {\it Step 1}.
Define
\eq\label{sp-X}
X=H^1_{t0}(\O,\div0)\cap \Bbb H_2(\O)^\perp_{L^2(\O)}.
\eeq
Endowed with the norm in $H^1(\O,\Bbb R^3)$, $X$ is a separable Hilbert space.
By the div-curl-gradient inequalities we know that
$$\|\w\|_{H^1(\O)}\leq C(\O)\|\curl\w\|_{L^2(\O)},\q\forall \w\in X.
$$
Hence both $\|\w\|_{H^1(\O)}$ and $\|\curl\w\|_{L^2(\O)}$ are the equivalent norms in $X$. Denote by $X^*$ the dual space of $X$, and denote by $\langle \cdot , \cdot\rangle_{X^*,X}$ the paring between $X^*$ and $X$.

For any given $\h_2\in \Bbb H_2(\O)$, $\w\in X$, we define a functional $A_{\h_2,\w}$ on $X$ by
$$
A_{\h_2,\w}[\bv]=\int_\O \langle\mH(x,\curl\w+\bold b+\h_2), \curl\bv\rangle dx,\q \forall \bv\in X.
$$
Using the condition $(H_1)$ we see that $A_{\h_2,\w}$ is a bounded and linear functional on $X$ and
$$\aligned
|A_{\h_2,\w}[\bv]|\leq \{c_1\|\curl\w+\bold b+\h_2\|_{L^2(\O)}+\|g_2\|_{L^2(\O)}\}\|\curl\bv\|_{L^2(\O)}.
\endaligned
$$
Thus there is an element in $X^*$, which is denoted by $\mA_{\h_2}(\w)$, such that
\eq\label{mAw}
\langle \mA_{\h_2}(\w), \bv\rangle_{X^*,X}=A_{\h_2,\w}[\bv]=\int_\O \langle\mH(x,\curl\w+\bold b+\h_2), \curl\bv\rangle dx,\q \forall \bv\in X.
\eeq
The mapping $\w\mapsto \mA_{\h_2}(\w)$ defines an operator
$\mA_{\h_2}: X\to X^*$, and we have
$$
\|\mA_{\h_2}(\w)\|_{X^*}\leq c_1\|\curl\w+\bold b+\h_2\|_{L^2(\O)}+\|g_2\|_{L^2(\O)}.
$$
From $(H_2)$ we know that $\mH(x,\z)$ is Lipschitz in $\z$, hence $\mA_{\h_2}$ is continuous. On the other hand, $(H_2)$ implies $(H_4)$,  hence
$$\aligned
\langle \mA_{\h_2}(\w_2)-\mA_{\h_2}(\w_1), \w_2-\w_1\rangle_{X^*,X}
\geq \mu\int_\O|\curl(\w_2-\w_1)|^2dx\geq \mu C_1(\O)\|\w_2-\w_1\|_{X}^2.
\endaligned
$$
Hence $\mA_{\h_2}$ is strongly monotone from $X$ to $X^*$. By the  Browder-Minty theorem (see for instance \cite[p.557, Theorem 26.A]{Z2}) we know that $\mA_{\h_2}$ is surjective.

{\it Step 2}. Given any $\u\in L^2(\O,\Bbb R^3)$ we can define a functional $L[\u]$ on $X$ by
$$
L[\u](\bv)=\int_\O \u\cdot\bv dx,\q \forall \bv\in X.
$$
Then $L[\u]$ is a bounded and linear functional on $X$, and there is an element in $X^*$, which is denoted by $\mL(\u)$, such that
$$
\langle \mL(\u),\bv\rangle_{X^*,X}=L[\u](\bv).
$$
Since $\mA_{\h_2}$ is surjective and $\mL(\J+\h_1)\in X^*$, there exists $\w\in X$ such that
$\mA_{\h_2}(\w)=\mL(\J+\h_1)$, that is,
\eq\label{mAJ}
\int_\O \langle\mH(x,\curl\w+\bold b+\h_2), \curl\bv\rangle dx=\int_\O(\J+\h_1)\cdot\bv dx,\q \forall \bv\in X.
\eeq
From \eqref{mAw} and since $\mH^\Gamma(\O,\div0)\subset \Bbb H_2(\O)^\perp_{L^2(\O)}$ we have
$$
\langle \mA_{\h_2}(\w),\h\rangle_{X^*,X}=0,\qq \int_\O(\J+\h_1)\cdot\h dx=0,\q\forall \h\in \Bbb H_2(\O).
$$
From these and \eqref{mAJ} we have
\eq\label{mAJ2}
\int_\O \langle\mH(x,\curl\w+\bold b+\h_2), \curl\bv\rangle dx=\int_\O(\J+\h_1)\cdot\bv dx,\q \forall \bv\in H^1_{t0}(\O,\div0).
\eeq
Hence by the De Rham lemma on $H^1_{t0}(\O,\div0)$ (see \cite[Lemma 2.2]{P4} and \cite[Lemma 4.2]{P5}) we know that there exists $p\in L^{2,-1/2}_0(\O)$ such that, in the sense of functionals in $H^1_{t0}(\O,\div0)$ we have
$$\curl [\mH(x,\curl\w+\bold b+\h_2)]=\J+\h_1+\nabla p.
$$
Using the argument in the proof of Proposition \ref{Prop-min-MD-J} we can show that $p=0$. Thus $\w\in H^1_{t0}(\O,\div0)$ is a weak solution of \eqref{MD-J0}.
By the strongly monotonicity of $\mA_{\h_2}$ we know that for any $\J+\h_1$, the weak solution $\w$ is unique, and the map $\J+\h_1\mapsto \w$ is continuous.
\end{proof}

\subsection{Solvability of \eqref{MD-J} by the reduction method}\

A description of the reduction method can be found in \cite{P3, P4}. Since $\J+\h_1\in \mH^\Gamma(\O,\div0)$, there exists $\bold j_0\in H^1(\O,\Bbb R^3)$ such that
\eq\label{eq-j}
\left\{\aligned
&\curl\bold j_0=\J+\h_1,\q \div\bold j_0=0\q&\text{in }\O,\\
&\nu\cdot\bold j_0=0\q&\text{on }\p\O,\\
&\bold j_0\perp_{L^2(\O)}\Bbb H_1(\O).
\endaligned\right.
\eeq
such a $\bold j_0$ is unique. The following lemma is obvious.

\begin{Lem}\label{Lem1-equiv-MD-J} Assume $\O, \J, \h_1, \h_2$ satisfy \eqref{cond-MJ} and \eqref{condMD-J}, and $\bold j_0$ is the solution of \eqref{eq-j}.
\begin{itemize}
\item[(i)] System \eqref{MD-J0} has a weak solution $\bv\in H^1_{t0}(\O,\div0)$ if and only if there exists $\phi\in H^1(\O)$ and $\h_1'\in \Bbb H_1(\O)$  such that the following system  has a solution $\bv$:
\eq\label{eqH}
\left\{\aligned
&\mH(x,\curl\bv+\curl\mU+\h_2)=\bold j_0+\h_1'+\nabla\phi &\text{\rm in }\O,\\
&\div\bv=0 &\text{\rm in }\O,\\
&\bv_T=\0&\text{\rm on }\p\O.
\endaligned\right.
\eeq

\item[(ii)] If furthermore $\mH$ satisfies $(H_3)$, then \eqref{eqH} is equivalent to the following
\eq\label{eqB}
\left\{\aligned
&\curl\bv=\mB(x,\bold j_0+\h_1'+\nabla\phi)-\curl\mU-\h_2 &\text{\rm in }\O,\\
&\div\bv=0 &\text{\rm in }\O,\\
&\bv_T=\0&\text{\rm on }\p\O.
\endaligned\right.
\eeq
\end{itemize}
\end{Lem}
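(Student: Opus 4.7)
The plan is to prove part (i) by directly invoking the Helmholtz-type decomposition recorded in the first line of \eqref{dec-0}, namely $\mathcal H(\O,\curl0)=\grad H^1(\O)\oplus_{L^2(\O)}\Bbb H_1(\O)$, together with the defining property \eqref{eq-j} of $\bold j_0$. Part (ii) is then a routine pointwise inversion using assumption $(H_3)$.

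For the forward direction of (i), suppose $\bv\in H^1_{t0}(\O,\div0)$ is a weak solution of \eqref{MD-J0}. Then $\curl[\mH(x,\curl\bv+\bold b+\h_2)]=\J+\h_1$ in $\O$. Since $\curl\bold j_0=\J+\h_1$ by \eqref{eq-j}, the vector field $\w:=\mH(x,\curl\bv+\bold b+\h_2)-\bold j_0$ belongs to $L^2(\O,\Bbb R^3)$ and satisfies $\curl\w=\0$ in the distributional sense, i.e.\ $\w\in \mathcal H(\O,\curl0)$. Applying the first decomposition in \eqref{dec-0} produces $\phi\in H^1(\O)$ and $\h_1'\in\Bbb H_1(\O)$ with $\w=\nabla\phi+\h_1'$, which is exactly the first line of \eqref{eqH}; the last two lines of \eqref{eqH} come for free from $\bv\in H^1_{t0}(\O,\div0)$. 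Conversely, if $\bv$ satisfies \eqref{eqH} with some $\phi\in H^1(\O)$ and $\h_1'\in\Bbb H_1(\O)$, applying $\curl$ to the first equation annihilates $\nabla\phi$ and $\h_1'$ (since $\curl\h_1'=\0$), and using $\curl\bold j_0=\J+\h_1$ and $\bold b=\curl\mU$ recovers the first equation of \eqref{MD-J0}. Combined with the remaining lines of \eqref{eqH}, this gives that $\bv$ solves \eqref{MD-J0}.

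For part (ii), assume in addition $(H_3)$, so $\mH(x,\cdot)$ has the pointwise inverse $\mB(x,\cdot)$ characterized by \eqref{H=HB}. Then the first line of \eqref{eqH} is equivalent, pointwise almost everywhere in $\O$, to
\[
\curl\bv+\curl\mU+\h_2=\mB(x,\bold j_0+\h_1'+\nabla\phi),
\]
which is the first line of \eqref{eqB}; the divergence and boundary conditions are unchanged. Hence \eqref{eqH} and \eqref{eqB} are equivalent systems.

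The only nontrivial step is the Helmholtz decomposition step in the forward direction of (i), but since $\J+\h_1\in\mH^\Gamma(\O,\div0)$ by \eqref{cond-MJ} and $\bold j_0$ is constructed precisely to satisfy \eqref{eq-j}, the curl-free residual $\w$ lies in $\mathcal H(\O,\curl0)$ without any extra integrability check, so the decomposition \eqref{dec-0} applies directly and no compatibility issue arises. The remainder of the argument is bookkeeping.
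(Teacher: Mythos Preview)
Your argument is correct and is precisely the routine verification the paper has in mind; indeed the paper declares this lemma ``obvious'' and gives no proof at all. The only step worth noting is that $\mH(x,\curl\bv+\bold b+\h_2)\in L^2(\O,\Bbb R^3)$ by $(H_1)$, so the curl-free residual $\w$ genuinely lies in $\mathcal H(\O,\curl0)$ and the decomposition \eqref{dec-0} applies---which you handle correctly.
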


\begin{Lem}\label{Lem-reduction-MD-J} Given $\h_1'\in\Bbb H_1(\O)$, system \eqref{eqB} has a solution $\bold v\in H^1_{t0}(\O,\div0)$ if and only if the following equation
\eq\label{eqdivB-MD-J}
\left\{\aligned
&\div[\mB(x,\bold j_0+\h_1'+\nabla\phi)]=0\q&\text{\rm in }\O,\\
&\nu\cdot \mB(x,\bold j_0+\h_1'+\nabla\phi)=\nu\cdot(\h_2+\curl\u^0_T)\q&\text{\rm on }\p\O,\\
\endaligned\right.
\eeq
has a solution $\phi\in H^1(\O)$ which satisfies the following orthogonality condition
\eq\label{orth-MD-J-B}
\int_\O\langle\mB(x,\bold j_0+\h_1'+\nabla\phi)-\curl\mU,\h\rangle dx=0,\q\forall \h\in\Bbb H_1(\O).
\eeq
\end{Lem}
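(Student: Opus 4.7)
The plan is to reduce both implications to the characterization $\curl H^1_{t0}(\O,\div0)=\mH_0^\Sigma(\O,\div0)$ from \eqref{im-curl}, combined with the decomposition $\mH_0(\O,\div0)=\mH_0^\Sigma(\O,\div0)\oplus_{L^2(\O)}\Bbb H_1(\O)$ from \eqref{dec-0}. Introduce the abbreviation $\bold w_\phi:=\mB(x,\bold j_0+\h_1'+\nabla\phi)-\curl\mU-\h_2$, so that the first equation of \eqref{eqB} reads $\curl\bv=\bold w_\phi$. Then \eqref{eqB} admits a solution $\bv\in H^1_{t0}(\O,\div0)$ if and only if $\bold w_\phi$ belongs to the image $\curl H^1_{t0}(\O,\div0)$, that is, to $\mH_0^\Sigma(\O,\div0)$. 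By the decomposition above, the latter is equivalent to: $\bold w_\phi$ is divergence-free, $\nu\cdot\bold w_\phi=0$ on $\p\O$, and $\bold w_\phi\perp_{L^2(\O)}\Bbb H_1(\O)$.

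For the necessity direction, suppose $\bv\in H^1_{t0}(\O,\div0)$ solves \eqref{eqB}. Then $\curl\bv=\bold w_\phi$. Taking divergence and using $\div\curl\mU=0$ and $\div\h_2=0$ gives $\div\mB(x,\bold j_0+\h_1'+\nabla\phi)=0$, the first line of \eqref{eqdivB-MD-J}. Since $\bv_T=\0$, integration by parts against an arbitrary scalar in $H^1(\O)$ yields $\nu\cdot\curl\bv=0$ in $H^{-1/2}(\p\O)$; together with $\nu\cdot\curl\mU|_{\p\O}=\nu\cdot\curl\u^0_T$ (which depends only on $\mU_T=\u^0_T$), this gives the boundary condition in \eqref{eqdivB-MD-J}. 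Finally, for any $\h\in\Bbb H_1(\O)$, integration by parts using $\bv_T=\0$ and $\curl\h=\0$ gives $\int_\O\curl\bv\cdot\h\,dx=0$, and combining with $\int_\O\h_2\cdot\h\,dx=0$ (because $\Bbb H_1(\O)\perp_{L^2(\O)}\Bbb H_2(\O)$) produces \eqref{orth-MD-J-B}.

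For the sufficiency direction, assume $\phi\in H^1(\O)$ satisfies \eqref{eqdivB-MD-J} together with \eqref{orth-MD-J-B}. I verify that $\bold w_\phi\in\mH_0^\Sigma(\O,\div0)$: the first line of \eqref{eqdivB-MD-J} gives $\div\bold w_\phi=0$; the boundary condition in \eqref{eqdivB-MD-J}, together with $\nu\cdot\curl\mU=\nu\cdot\curl\u^0_T$ on $\p\O$, gives
\[
\nu\cdot\bold w_\phi=\nu\cdot\mB(x,\bold j_0+\h_1'+\nabla\phi)-\nu\cdot\curl\u^0_T-\nu\cdot\h_2=0;
\]
and \eqref{orth-MD-J-B}, combined again with $\Bbb H_1(\O)\perp_{L^2(\O)}\Bbb H_2(\O)$, gives $\bold w_\phi\perp_{L^2(\O)}\Bbb H_1(\O)$. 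Applying \eqref{im-curl} produces $\bv\in H^1_{t0}(\O,\div0)$ with $\curl\bv=\bold w_\phi$, which is the required solution of \eqref{eqB}.

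The argument is essentially a bookkeeping exercise once the image theorem \eqref{im-curl} is in hand; the only delicate point is the rigorous use of the weak normal trace on $\p\O$ in $H^{-1/2}(\p\O)$ to justify both the identity $\nu\cdot\curl\bv=\0$ for $\bv\in H^1_{t0}(\O,\div0)$ and the identity $\nu\cdot\curl\mU=\nu\cdot\curl\u^0_T$ (which says that the normal component of the curl of any divergence-free extension depends only on the tangential boundary data). These are standard consequences of the Stokes formula for $\mH(\O,\curl,\div)$, but must be invoked with care because $\mB(x,\bold j_0+\h_1'+\nabla\phi)$ is only in $L^2(\O,\Bbb R^3)$ a priori and its normal trace is interpreted in the sense of the trace theorem for divergence-free fields.
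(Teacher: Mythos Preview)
Your proof is correct and follows essentially the same approach as the paper: both arguments hinge on the identification $\curl H^1_{t0}(\O,\div0)=\mH_0^\Sigma(\O,\div0)$ together with the decomposition $\mH_0(\O,\div0)=\mH_0^\Sigma(\O,\div0)\oplus_{L^2(\O)}\Bbb H_1(\O)$, the identity $\nu\cdot\curl\mU=\nu\cdot\curl\u^0_T$, and the orthogonality $\Bbb H_2(\O)\perp_{L^2(\O)}\Bbb H_1(\O)$. Your write-up simply spells out the two implications more explicitly than the paper does.
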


\begin{proof} Recall that \eqref{eqB} has a solution if and only if
\eq\label{BinHsigma}
\mB(x,\bold j_0+\h_1'+\nabla\phi)-\curl\mathcal U-\h_2 \in \curl H^1_{t0}(\O,\div0)= \mathcal H^\Sigma_0(\O,\div0).
\eeq
On the other hand,  $\phi$ satisfies \eqref{eqdivB-MD-J} if and only if
$$\mB(x,\bold j_0+\h_1'+\nabla\phi)-\curl\mU-\h_2\in \mH_0(\O,\div0)=\mH^\Sigma_0(\O,\div0)\oplus_{L^2(\O)}\Bbb H_1(\O).
$$
Here we have used the equality
$\nu\cdot\curl\mathcal U=\nu\cdot\curl\mathcal U_T
=\nu\cdot\curl\u^0_T.$
Hence \eqref{BinHsigma} holds if and only if $\phi$ satisfies \eqref{eqdivB-MD-J} and
$$\mB(x,\bold j_0+\h_1'+\nabla\phi)-\curl\mathcal U -\h_2\perp_{L^2(\O)}\Bbb H_1(\O).
$$
Since $\h_2\in\Bbb H_2(\O)\subset \Bbb H_1(\O)^\perp_{L^2(\O)}$, the above orthogonality is equivalent to \eqref{orth-MD-J-B}.
\end{proof}

\begin{Lem}\label{Lem-uniq} Assume $\mH$ satisfies $(H_3)$ and $(H_4)$. Then for any given  $\j_0$, $\mU$ and $\h_2$, there exists at most one  $(\phi,\h_1')\in \dot H^1(\O)\times\Bbb H_1(\O)$ such that \eqref{eqB} is solvable.
\end{Lem}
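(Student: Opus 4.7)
Suppose $(\phi_1,\h_{1,1}')$ and $(\phi_2,\h_{1,2}')$ in $\dot H^1(\O)\times\Bbb H_1(\O)$ both make \eqref{eqB} solvable. By Lemma \ref{Lem-reduction-MD-J}, each $\phi_i$ satisfies \eqref{eqdivB-MD-J} with the Neumann data $\nu\cdot(\h_2+\curl\u^0_T)$, and each pair satisfies the orthogonality \eqref{orth-MD-J-B}. Set
$$
\w_i(x)=\mB(x,\,\bold j_0+\h_{1,i}'+\nabla\phi_i),\qq \eta=\phi_1-\phi_2\in\dot H^1(\O),\qq \g=\h_{1,1}'-\h_{1,2}'\in\Bbb H_1(\O).
$$
Then $\w_1-\w_2$ is divergence-free in $\O$ with $\nu\cdot(\w_1-\w_2)=0$ on $\p\O$.

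The plan is to test the divergence-free condition against $\eta$ and the orthogonality condition against $\g$, then combine via the monotonicity of $\mB$. Integration by parts (using $\nu\cdot(\w_1-\w_2)=0$) yields
$$
\int_\O (\w_1-\w_2)\cdot\nabla\eta\,dx=0,
$$
and subtracting the two instances of \eqref{orth-MD-J-B} with $\h=\g$ gives
$$
\int_\O (\w_1-\w_2)\cdot\g\,dx=0.
$$
Adding these,
$$
\int_\O \langle \w_1-\w_2,\; (\bold j_0+\h_{1,1}'+\nabla\phi_1)-(\bold j_0+\h_{1,2}'+\nabla\phi_2)\rangle\,dx=0,
$$
since the $\bold j_0$ contribution cancels. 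The next step is to invoke monotonicity of $\mB$: $(H_4)$ applied to $\z_i=\mB(x,\w_i)$ together with $(H_3)$ gives $\langle \w_2-\w_1,\mB(x,\w_2)-\mB(x,\w_1)\rangle\ge\mu|\mB(x,\w_2)-\mB(x,\w_1)|^2\ge 0$, with equality forcing $\mB(x,\w_1)=\mB(x,\w_2)$ pointwise. Hence the integrand above is nonnegative and vanishes a.e., which by the injectivity in $(H_3)$ yields
$$
\h_{1,1}'+\nabla\phi_1=\h_{1,2}'+\nabla\phi_2\q\text{a.e., i.e.,}\q \nabla\eta+\g=\0.
$$

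Finally I would separate the harmonic and gradient pieces using the $L^2$-orthogonality $\grad H^1(\O)\perp_{L^2(\O)}\Bbb H_1(\O)$ from the first line of \eqref{dec-0}: pairing $\nabla\eta+\g=\0$ with $\g$ in $L^2(\O)$ gives $\|\g\|_{L^2(\O)}^2=0$, hence $\g=\0$ and then $\nabla\eta=\0$. Since $\eta\in\dot H^1(\O)$ has zero mean, $\eta\equiv 0$, and we conclude $(\phi_1,\h_{1,1}')=(\phi_2,\h_{1,2}')$.

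The conceptual heart is the pairing that turns the $\dot H^1\times\Bbb H_1$ data back into a single vector $\nabla\eta+\g$ fed into the monotone operator $\mB$; the only delicate step is verifying that $(H_3)+(H_4)$ transfers monotonicity to $\mB$ and enforces injectivity, but this is immediate from the inverse relation. Everything else reduces to the Hodge-type orthogonality already recorded in \eqref{dec-0}.
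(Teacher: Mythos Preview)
Your argument is correct but follows a different route from the paper's. The paper works on the $\mH$-side: it picks solutions $\bv_j\in H^1_{t0}(\O,\div0)\cap\Bbb H_2(\O)^\perp$ of \eqref{eqB}, sets $\u_j=\bv_j+\mU$, and subtracts the relation coming from \eqref{eqH} to obtain
$$\mH(x,\curl\u_2+\h_2)-\mH(x,\curl\u_1+\h_2)=\nabla(\phi_2-\phi_1)+\h'_{1,2}-\h'_{1,1},$$
which it pairs against $\curl(\u_2-\u_1)$. The right-hand side integrates by parts to a boundary term that vanishes because $\u_{1T}=\u_{2T}=\u^0_T$, and $(H_4)$ forces $\curl(\u_2-\u_1)=\0$; from there $\u_1=\u_2$ and the orthogonality $\grad H^1(\O)\perp\Bbb H_1(\O)$ finishes. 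You instead work on the $\mB$-side through the scalar reduction of Lemma~\ref{Lem-reduction-MD-J}, never touching the vector potentials $\bv_j$: the Neumann problem \eqref{eqdivB-MD-J} and the orthogonality \eqref{orth-MD-J-B} supply precisely the two identities needed to test against $\nabla\eta$ and $\g$, and the monotonicity of $\mB$ (inherited from $(H_4)$ via $(H_3)$) closes the argument. Your route is a little more economical in that it avoids selecting and normalizing the $\bv_j$ in $\Bbb H_2(\O)^\perp$, while the paper's is more self-contained since it does not invoke Lemma~\ref{Lem-reduction-MD-J}. One cosmetic point: in the sentence where you transfer $(H_4)$ to $\mB$ you reuse the letters $\w_i$ for generic arguments of $\mB$, clashing with your earlier definition $\w_i=\mB(x,\j_0+\h_{1,i}'+\nabla\phi_i)$; the intended pointwise inequality is clear and correct, but rename one of them to avoid confusion.
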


\begin{proof}
Suppose there exist $\phi_j\in \dot H^1(\O)$ and $\h'_{1,j}\in \Bbb H_1(\O)$, $j=1,2$, such that \eqref{eqB} with $\phi=\phi_j$ and $\h_1'=\h_{1,j}'$ has a solution $\bv_j$. We may assume $\bv_j\in \Bbb H_2(\O)^\perp_{L^2(\O)}$.
Write $\u_j=\bv_j+\mathcal U$. Then
$$
\mH(x,\curl\u_2+\h_2)-\mH(x,\curl\u_1+\h_2)=\nabla(\phi_2-\phi_1)+\h'_{1,2}-\h'_{1,1}.
$$
Multiplying it by $\curl(\u_2-\u_1)$ and integrating, and using condition $(H_4)$ we have
$$\aligned
&\mu\|\curl(\u_2-\u_1)\|_{L^2(\O)}^2\\
\leq& \int_\O\langle \mH(x,\curl\u_2+\h_2)-\mH(x,\curl\u_1+\h_2),\curl(\u_2-\u_1)\rangle dx\\
=&\int_\O\langle \h'_{1,2}-\h'_{1,1}+\nabla(\phi_2-\phi_1), \curl(\u_2-\u_1)\rangle dx\\
=&\int_{\p\O}\langle\nu\times(\u_{2T}-\u_{1T}),\h'_{1,2}-\h'_{1,1}+\nabla(\phi_2-\phi_1))\rangle dS=0.
\endaligned
$$
In the last line we used the fact that $\nu\times\u_{2T}=\nu\times\u_{1T}=\nu\times \u^0_T$. So
$\curl(\u_2-\u_1)=\0$, hence $\u_2-\u_1\in \Bbb H_2(\O)$.
This together with the fact $\u_2-\u_1=\bv_2-\bv_1\in \Bbb H_2(\O)^\perp_{L^2(\O)}$ implies that
$\u_2-\u_1=\0$, so $\bv_2-\bv_1=\0$.

Finally, since $\u_2=\u_1$,
$$\h_{1,2}'-\h_{1,1}'+\nabla(\phi_2-\phi_1)=\mH(x,\curl\u_2+\h_2)-\mH(x,\curl\u_1+\h_2)=\0.
$$
Since $\h_{1,2}'-\h_{1,1}'\in \Bbb H_1(\O)$, and $\nabla(\phi_2-\phi_1)\in \nabla H^1(\O)$ which is orthogonal to $\Bbb H_1(\O)$, we conclude that $\h_{1,2}'-\h_{1,1}'=\0$ and $\nabla(\phi_2-\phi_1)=\0$.
\end{proof}

\subsubsection{Existence of solutions of \eqref{eqdivB-MD-J} by the monotone operator method}\

\eqref{eqdivB-MD-J} is a quasilinear co-normal problem. Existence and regularity of weak solutions to such type problems have been well studied, see for instance \cite[Chapter 10]{LU}, \cite[Theorem 11.20]{Li}.

\begin{Prop}\label{Prop-sol}  Assume that $\O, \mH, \h_2, \u^0_T$ satisfy \eqref{cond-MJ} and $\j_0\in H^1(\O,\Bbb R^3)$.
\begin{itemize}
\item[(i)] For any $\h_1'\in \Bbb H_1(\O)$, \eqref{eqdivB-MD-J} has a unique solution $\phi\in \dot H^1(\O)$.
\item[(ii)] There exists $\h_1'\in \Bbb H_1(\O)$ such that \eqref{eqdivB-MD-J} has a solution $\phi\in \dot H^1(\O)$ which satisfies \eqref{orth-MD-J-B}.
\end{itemize}
\end{Prop}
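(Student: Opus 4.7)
The plan is to recast both parts as monotone operator problems on the appropriate function space, exploiting the fact that under \eqref{cond-MJ} the map $\mB$ satisfies $(B_1)$--$(B_2)$ (Remark \ref{Rem4.2}) and hence the strong monotonicity \eqref{mono-B}.

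For Part (i), with $\h_1'\in\Bbb H_1(\O)$ fixed I would seek $\phi\in\dot H^1(\O)$ satisfying the weak form
\[
\int_\O\mB(x,\j_0+\h_1'+\nabla\phi)\cdot\nabla\eta\,dx=\langle\nu\cdot(\h_2+\curl\u^0_T),\eta\rangle_{\p\O,1/2},\qquad\forall\,\eta\in H^1(\O).
\]
The compatibility $\langle\nu\cdot(\h_2+\curl\u^0_T),1\rangle_{\p\O,1/2}=0$ holds because $\h_2+\curl\mU$ is divergence-free in $\O$, so the right-hand side descends to a continuous linear functional on $\dot H^1(\O)$. Defining $\mA_{\h_1'}:\dot H^1(\O)\to\dot H^1(\O)^*$ by the left-hand side, $(B_2)$ makes $\mA_{\h_1'}$ Lipschitz continuous with linear growth, and \eqref{mono-B} combined with the Poincar\'e inequality on $\dot H^1(\O)$ gives the strong monotonicity
\[
\langle\mA_{\h_1'}(\phi_2)-\mA_{\h_1'}(\phi_1),\phi_2-\phi_1\rangle\geq\lam_0\|\nabla(\phi_2-\phi_1)\|_{L^2(\O)}^2.
\]
The Browder--Minty theorem then produces the unique weak solution $\phi=\phi(\h_1')\in\dot H^1(\O)$.

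For Part (ii), introduce $\mT:\Bbb H_1(\O)\to\Bbb H_1(\O)$ by
\[
\langle\mT(\h_1'),\h\rangle_{L^2(\O)}=\int_\O\langle\mB(x,\j_0+\h_1'+\nabla\phi(\h_1'))-\curl\mU,\h\rangle\,dx,\qquad\h\in\Bbb H_1(\O),
\]
so that a zero of $\mT$ corresponds exactly to an $\h_1'$ satisfying \eqref{orth-MD-J-B}. The crux is a cancellation identity: given $\h_a,\h_b\in\Bbb H_1(\O)$ with associated solutions $\phi_a,\phi_b$ from Part (i), setting $\xi_\alpha=\j_0+\h_\alpha+\nabla\phi_\alpha$ and testing the two weak equations with $\eta=\phi_b-\phi_a\in H^1(\O)$ followed by subtraction yields
\[
\int_\O[\mB(x,\xi_b)-\mB(x,\xi_a)]\cdot\nabla(\phi_b-\phi_a)\,dx=0.
\]
Because $\xi_b-\xi_a=(\h_b-\h_a)+\nabla(\phi_b-\phi_a)$ with the two summands $L^2(\O)$-orthogonal by \eqref{dec-0}, applying \eqref{mono-B} gives
\[
\langle\mT(\h_b)-\mT(\h_a),\h_b-\h_a\rangle_{L^2(\O)}=\int_\O[\mB(x,\xi_b)-\mB(x,\xi_a)]\cdot(\xi_b-\xi_a)\,dx\geq\lam_0\|\h_b-\h_a\|_{L^2(\O)}^2,
\]
so $\mT$ is strongly monotone on $\Bbb H_1(\O)$. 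The same computation, combined with the Lipschitz bound $|\nabla_\w\mB|\leq N_2$ from $(B_2)$, also forces $\h_1'\mapsto\phi(\h_1')$ to be Lipschitz, hence $\mT$ is continuous. On the finite-dimensional Hilbert space $\Bbb H_1(\O)$, a continuous strongly monotone (and therefore coercive, $\langle\mT(\h),\h\rangle\geq\lam_0\|\h\|^2-\|\mT(0)\|\,\|\h\|$) mapping is surjective, so in particular $\mT(\h_1')=0$ admits a (unique) solution.

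The principal technical point is the cross-term cancellation $\int_\O[\mB(\xi_b)-\mB(\xi_a)]\cdot\nabla(\phi_b-\phi_a)\,dx=0$, since this is what couples the infinite-dimensional Neumann problem of Part (i) to the finite-dimensional fixed-point problem of Part (ii). Once this identity is in hand, the $L^2$-orthogonality between $\Bbb H_1(\O)$ and $\nabla H^1(\O)$ supplied by \eqref{dec-0} transfers the strong monotonicity of $\mB$ to $\mT$ with no loss in the constant.
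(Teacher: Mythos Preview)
Your proof is correct. Part (i) is essentially identical to the paper's argument via Browder--Minty. For Part (ii), however, you take a genuinely different and cleaner route than the paper.

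The paper fixes an orthonormal basis of $\Bbb H_1(\O)$, writes the orthogonality condition \eqref{orth-MD-J-B} as a finite-dimensional equation $\f(\xi)=\bold c$, and then establishes the acute-angle condition $\langle\f(\xi)-\bold c,\xi\rangle\geq 0$ for large $|\xi|$ through a chain of estimates: an $H^1$ bound \eqref{H1est} on $\phi_\xi$, the lower bound $\|\h'_{1,\xi}+\nabla\phi_\xi\|_{L^2}^2\geq|\xi|^2$, and a direct computation of $\langle\f(\xi),\xi\rangle$ that uses the weak equation to trade $\h'_{1,\xi}$ for $\h'_{1,\xi}+\nabla\phi_\xi$ at the cost of a boundary term. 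Existence then follows from a topological fixed-point result.

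Your approach bypasses all of these estimates by recognizing that the finite-dimensional map $\mT$ is itself strongly monotone. The key device is the cancellation identity $\int_\O[\mB(x,\xi_b)-\mB(x,\xi_a)]\cdot\nabla(\phi_b-\phi_a)\,dx=0$, obtained by subtracting the two weak equations: this is exactly what lets the full monotonicity constant $\lam_0$ of $\mB$ pass undiluted to $\mT$ via the $L^2$-orthogonality of $\Bbb H_1(\O)$ and $\nabla H^1(\O)$. Besides being shorter, your argument yields uniqueness of $\h_1'$ automatically, whereas in the paper uniqueness is established separately in Lemma~\ref{Lem-uniq}.
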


\begin{proof} (i). As in the proof of Theorem \ref{Thm-Mono-MD-J}, we fix $\h_1'\in\Bbb H_1(\O)$ and define a map $\mT$ such that
$$
\langle \mT(\phi),\eta\rangle_{\dot H^1(\O)^*, \dot H^1(\O)}=\int_\O \langle\mB(x,\j_0+\h_1'+\nabla\phi), \nabla\eta\rangle dx,\q \forall \phi, \eta\in \dot H^1(\O),
$$
 Using condition $(B_1)$, $(B_2)$ we can show that $\mT: \dot H^1(\O)\mapsto \dot H^1(\O)^*$ is strongly monotone. By the Browder-Minty theorem $\mT$ is surjective, hence there exists a unique $\phi\in \dot H^1(\O)$ such that, for any $\eta\in \dot H^1(\O)$,
$$
\langle \mT(\phi),\eta\rangle_{\dot H^1(\O)^*, \dot H^1(\O)}=\langle  \nu\cdot (\h_2+\curl\A^0_T),\eta\rangle_{\p\O,1/2}.
$$
Then $\phi$ is the weak solution of \eqref{eqdivB-MD-J}.

(ii). Choose an orthonormal basis $\{\e_1,\cdots,\e_N\}$ of $\Bbb H_1(\O)$, and write
$\xi=(\xi_1,\cdots,\xi_N)^t$, $\h_1'=\h'_{1,\xi}=\sum_{i=1}^N\xi_i\e_i$,
and write the unique solution of \eqref{eqdivB-MD-J} associated with $\h'_{1,\xi}$ by $\phi=\phi_{\xi}$.

{\it Step 2.1}. We show that the map $\xi\mapsto \phi_\xi$
is continuous from $\Bbb R^N$ into $\dot H^1(\O)$. To prove, let $\xi_0\in\Bbb R^N$ and assume $\xi_j\to \xi_0$. Since $\Bbb H_1(\O)$ is of finite dimensions, and $\xi_j\to\xi_0$, so $\bold v_j=\h'_{1,\xi_j}-\h'_{1,\xi_0}\to 0$ in $H^1(\O,\Bbb R^3)$.
Let us denote
\eq\label{B1}
\mB_1(\w)=\mB(x,\bold j_0+\w).
\eeq
Set $\psi_j=\phi_{\xi_j}-\phi_{\xi_0}$, $\bold v_j=\h'_{1,\xi_j}-\h'_{1,\xi_0}$, $\w_0=\h'_{1,\xi_0}+\nabla\phi_{\xi_0}$.
We have
$$
\aligned
0=&\int_\O\langle \mB_1(\h'_{1,\xi_j}+\nabla\phi_{\xi_j})- \mB_1(\h'_{1,\xi_0}+\nabla\phi_{\xi_0}), \nabla(\phi_{\xi_j}-\phi_{\xi_0}\rangle dx\\
=&\int_\O\int_0^1\langle \nabla_{\w}\mB_1(\w_0 +t(\bold v_j+\nabla\psi_j))[\bold v_j+\nabla\psi_j],\nabla\psi_j \rangle dx dt\\
\geq &\lam_0\int_\O|\nabla\psi_j|^2 dx-N\int_\O|\bold v_j||\nabla\psi_j|dx.\\
\endaligned
$$
Hence $\|\nabla\psi_j\|_{L^2(\O)}\leq {N\over\lam_0}\|\bold v_j\|_{L^2(\O)}\to 0.$

{\it Step 2.2}. We can further show that there exists a constant $C(\O)$, which depends also on the constants in $(B_1), (B_2)$, such that
\eq\label{H1est}
\|\phi_\xi\|_{H^1(\O)}\leq C(\O)\{\|\nu\cdot(\h_2+\curl\u^0_T)\|_{H^{-1/2}(\p\O)}+\|\j_0+\h'_{1,\xi}\|_{L^2(\O)}+\|g_1\|_{L^2(\O)}\}.
\eeq
In fact, using the definition of weak solutions to \eqref{eqdivB-MD-J} we have
$$
\int_\O\langle \mB_1(\h'_{1,\xi}+\nabla\phi_\xi)-\mB_1(\h'_{1,\xi}), \nabla\phi_\xi\rangle dx=\langle\nu\cdot(\h_2+ \curl\u^0_T),\phi_\xi\rangle_{\p\O,1/2}
-\int_\O\langle \mB_1(\h'_{1,\xi}),\nabla\phi_\xi\rangle dx.
$$
Using $(B_1)$ and $(B_2)$ we get from the above inequality that
$$\aligned
\lam_0\|\nabla\phi_\xi\|_{L^2(\O)}^2\leq & \|\phi_\xi\|_{H^{1/2}(\p\O)}\|\nu\cdot(\h_2+\curl\u^0_T)\|_{H^{-1/2}(\p\O)}+\|\mB_1(\h'_{1,\xi})\|_{L^2(\O)}\|\nabla\phi_\xi\|_{L^2(\O)}\\
\leq &C(\O)\|\nabla\phi_\xi\|_{L^2(\O)}\|\nu\cdot(\h_2+\curl\u^0_T)\|_{H^{-1/2}(\p\O)}+\|\mB_1(\h'_{1,\xi})\|_{L^2(\O)}\|\nabla\phi_\xi\|_{L^2(\O)}.
\endaligned
$$
From this and $(B_1)$ we get \eqref{H1est}.

{\it Step 2.3}. We claim that
\eq\label{claim1}
\liminf_{|\xi|\to\infty}{\|\h'_{1,\xi}+\nabla\phi_\xi\|_{L^2(\O)}^2\over|\xi|^2}\geq 1.
\eeq
Since $\h'_{1,\xi}\in \Bbb H^1(\O)$, so $\h'_{1,\xi}$ and $\nabla\phi_\xi$ are orthogonal to each other with respect to the $L^2$ inner product. So $$\|\h'_{1,\xi}+\nabla\phi_\xi\|_{L^2(\O)}^2=\|\h'_{1,\xi}\|_{L^2(\O)}^2+\|\nabla\phi_\xi\|_{L^2(\O)}^2\geq \|\h'_{1,\xi}\|_{L^2(\O)}^2=|\xi|^2.
$$
So \eqref{claim1} is true.

{\it Step 2.4}. Using the basis $\{\e_1,\cdots,\e_N\}$ of $\Bbb H_1(\O)$, \eqref{orth-MD-J-B} can be written as
\eq\label{fccc}
\int_\O\langle\mB_1(\h'_{1,\xi}+\nabla\phi_\xi)-\curl\mU,\e_i\rangle dx=0,\q i=1,\cdots, N.
\eeq
Let us write
$$\aligned
& c_i=\int_\O\e_i\cdot\curl\mU dx,\qq \bold c=(c_1,\cdots,c_N)^t,\\
& f_i(\xi)=\int_\O\langle\mB_1(\h'_{1,\xi}+\nabla\phi_\xi),\e_i\rangle dx,\qq \f(\xi)=(f_1(\xi),\cdots, f_N(\xi))^t.
\endaligned
$$
Then \eqref{fccc} can be written as
\eq\label{fc}
\f(\xi)=\bold c.
\eeq
Since the map $\xi\mapsto \phi_\xi$ is continuous from $\Bbb R^N$ to $\dot H^1(\O)$, and the map
$\w\mapsto \mB(x,\w(x))$ is continuous from $L^2(\O,\Bbb R^3)$ to $L^2(\O,\Bbb R^3)$, so the map
$\xi\mapsto \mB_1(\h'_{1,\xi}+\nabla\phi_\xi)$
is continuous from $\Bbb R^N$ to $L^2(\O,\Bbb R^3)$. Thus $\f(\xi)$ is continuous from $\Bbb R^N$ to $\Bbb R^N$.

{\it Step 2.5}.
For any $\xi\in \Bbb R^N$ we have
$$\aligned
\langle\f(\xi),\xi\rangle=&\sum_{i=1}^N\xi_i\int_\O\langle\mB_1(\h'_{1,\xi}+\nabla\phi_\xi),\e_i\rangle dx
=\int_\O\langle\mB_1(\h'_{1,\xi}+\nabla\phi_\xi),\h'_{1,\xi}\rangle dx\\
=&\int_\O\langle\mB_1(\h'_{1,\xi}+\nabla\phi_\xi),\h'_{1,\xi}+\nabla\phi_\xi\rangle dx -\langle\nu\cdot(\h_2+ \curl\u^0_T), \phi_\xi\rangle_{\p\O,1/2}\\
=&\int_\O\langle\mB_1(\h'_{1,\xi}+\nabla\phi_\xi)-\B_0,\h'_{1,\xi}+\nabla\phi_\xi\rangle dx
+\int_\O\langle\B_0,\h'_{1,\xi}+\nabla\phi_\xi\rangle dx\\
&\qq  -\langle\nu\cdot(\h_2+ \curl\u^0_T),\phi_\xi\rangle_{\p\O,1/2}\\
\geq &\lam_0\int_\O|\h'_{1,\xi}+\nabla\phi_\xi|^2dx-\|\B_0\|_{L^2(\O)}\|\h'_{1,\xi}+\nabla\phi_\xi\|_{L^2(\O)}\\
&\qq -\|\phi_\xi\|_{H^{1/2}(\p\O)}\|\nu\cdot(\h_2+\curl\u^0_T)\|_{H^{-1/2}(\p\O)}\\
\geq &{3\lam_0\over 4}\|\h'_{1,\xi}+\nabla\phi_\xi\|_{L^2(\O)}^2-C_1\|\phi_\xi\|_{H^{1/2}(\p\O)}-C_2,
\endaligned
$$
where $\B_0=\mB(x,\bold j_0)$, $C_1=\|\B_0\|_{L^2(\O)}$ and $C_2=\|\nu\cdot(\h_2+\curl\u^0_T)\|_{H^{-1/2}(\p\O)}$. From this, \eqref{H1est},  \eqref{claim1} and the Cauchy inequality we get
\eq\label{fxixi}
\langle \f(\xi),\xi\rangle\geq {\lam_0\over 2}\|\h'_{1,\xi}+\nabla\phi_\xi\|_{L^2(\O)}^2-C_3.
\eeq

Using \eqref{claim1} and \eqref{fxixi} we conclude that there exists $R_0>0$ such that
$$\langle  \f(\xi),\xi\rangle\geq {\lam_0\over 3}|\xi|^2-C_4,\q\forall |\xi|\geq R_0.
$$
It follows that there exists $R>0$ such that the following acute angle condition is satisfied:
$$
\langle \f(\xi)-\bold c,\xi\rangle\geq 0,\q\forall \xi\in S_R.
$$
Here we denote $S_R=\{\xi\in \Bbb R^N: |\xi|=R\}$ and $B_R=\{\xi\in\Bbb R^N: |\xi|\leq R\}$. It then follows that there exists $\xi_0\in B_R$ such that $\f(\xi_0)-\bold c=\0$ (see Proposition 2.8 of \cite{Z1}, and Theorem
1.11 of \cite{G}). This $\xi_0$ solves equation \eqref{fc}.

For this $\xi_0$, let $\phi_{\xi_0}$ be the solution of \eqref{eqdivB-MD-J} associated with $\h'_{1,\xi_0}$. Then $(\h'_{1,\xi_0},\phi_{\xi_0})$ solves problem \eqref{eqdivB-MD-J}-\eqref{orth-MD-J-B}.
\end{proof}

Summarizing the above discussions we get the following

\begin{Thm} Assume \eqref{cond-MJ} and \eqref{condMD-J}.
Then \eqref{MD-J} has a weak solution $\u\in H^1(\O,\div0)$.
\end{Thm}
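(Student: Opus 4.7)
The plan is to chain together the reduction lemmas already established in this subsection, so that the existence statement reduces to Proposition \ref{Prop-sol}(ii) plus a lifting argument. I would not reprove anything; the entire content is to verify that the hypotheses are compatible across the three reduction steps.

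First I would fix the extension $\mU$ of $\u^0_T$ (divergence-free, curl-minimizing) granted by \eqref{condMD-J}, and set $\bold b = \curl\mU$, so that \eqref{MD-J} is equivalent to \eqref{MD-J0} in the unknown $\bv = \u - \mU$. Next, since $\J+\h_1 \in \mH^\Gamma(\O,\div 0)$ (recall $\Bbb H_1(\O) \subset \mH^\Gamma(\O,\div 0)$ by the discussion after \eqref{dec-0}), the image-of-$\curl$ identity in \eqref{im-curl} yields a (unique) $\j_0 \in H^1(\O,\Bbb R^3)$ solving \eqref{eq-j}. At this point the setting of Lemma \ref{Lem1-equiv-MD-J} applies verbatim: \eqref{MD-J0} is solvable iff, for some $\phi \in H^1(\O)$ and some $\h_1' \in \Bbb H_1(\O)$, system \eqref{eqH} is solvable, and by $(H_3)$ this is equivalent to the solvability of \eqref{eqB}.

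Then I would invoke Lemma \ref{Lem-reduction-MD-J}, which converts solvability of \eqref{eqB} for a prescribed $\h_1'$ into the pair of conditions: existence of a weak solution $\phi \in \dot H^1(\O)$ to the quasilinear Neumann problem \eqref{eqdivB-MD-J}, together with the orthogonality relation \eqref{orth-MD-J-B}. Finally, Proposition \ref{Prop-sol}(ii) produces a specific $\h_1' \in \Bbb H_1(\O)$ and an associated $\phi_{\h_1'} \in \dot H^1(\O)$ for which both conditions hold simultaneously. Tracing back through Lemmas \ref{Lem-reduction-MD-J} and \ref{Lem1-equiv-MD-J} then delivers a $\bv \in H^1_{t0}(\O,\div 0)$ solving \eqref{MD-J0}, and $\u = \bv + \mU \in H^1(\O,\div 0)$ is the desired weak solution of \eqref{MD-J}.

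The only subtle point in this assembly is verifying that no hidden regularity gap appears when passing from \eqref{eqH} to \eqref{MD-J0}: one must note that the ``$\nabla p$'' produced by the De Rham lemma actually vanishes, as in the last paragraph of the proof of Proposition \ref{Prop-min-MD-J} and the last step of the proof of Theorem \ref{Thm-Mono-MD-J}. This follows by testing against $\nabla\zeta$ for $\zeta$ solving $\Delta\zeta = p$ with $\zeta|_{\p\O}=0$ and using $\div(\J+\h_1)=0$. The truly substantive hard step, already done in Proposition \ref{Prop-sol}(ii), is the topological-degree / acute-angle argument producing the correct $\h_1'$; from the present theorem's viewpoint that is a black box, and the remaining work is purely bookkeeping to confirm that the $\h_2$-perpendicularity in \eqref{orth-MD-J-B} indeed matches the orthogonality $\h_2 \perp \Bbb H_1(\O)$ used implicitly.
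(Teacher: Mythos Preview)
Your proposal is correct and follows essentially the same route as the paper's own proof: define $\j_0$ via \eqref{eq-j}, apply Proposition \ref{Prop-sol}(ii) to obtain $(\h_1',\phi_{\h_1'})$ satisfying \eqref{eqdivB-MD-J}--\eqref{orth-MD-J-B}, then trace back through Lemmas \ref{Lem-reduction-MD-J} and \ref{Lem1-equiv-MD-J} to get $\bv$ and hence $\u=\bv+\mU$. Your final paragraph about the vanishing of $\nabla p$ is superfluous here, since the equivalence in Lemma \ref{Lem1-equiv-MD-J} already bypasses the De Rham step entirely; that argument belongs to the variational and monotone-operator proofs, not to the reduction method.
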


\begin{proof} Let $\j_0$ be determined by $\J$ and $\h_1$ by \eqref{eq-j}.  By Proposition \ref{Prop-sol} (ii) we know that we can choose $\h_1'$ (depending on $\h_1, \h_2$) such that \eqref{eqdivB-MD-J} has a solution $\phi=\phi_{\h_1'}\in \dot H^1(\O)$ which satisfies the orthogonality condition \eqref{orth-MD-J-B}. Then by Lemmas \ref{Lem1-equiv-MD-J} and \ref{Lem-reduction-MD-J} we know that for the given $\h_2$ and for the above $\h_1'$, \eqref{eqB} has a solution $\bold v\in H^1_{t0}(\O,\div0)$, hence \eqref{eqH} has a solution $\u\in H^1(\O,\div0)$, and $\u$ is actually a solution of \eqref{MD-J}.
\end{proof}

\section{Tangential Curl BVP of the Maxwell System with a Given Current}

In this section we consider the tangential curl BVP of the Maxwell system with a given current $\J$:
\eq\label{MC-J}
\left\{\aligned
&\curl [\mH(x,\curl\u+\h_2(x))]=\J(x)+\h_1(x)\q &\text{in }\O,\\
&\div\u=0\q &\text{in }\O,\\
&\nu\times\curl\u=\nu\times\B^0\q& \text{on }\p\O.
\endaligned\right.
\eeq
We assume condition \eqref{cond-MJ} and
\eq\label{condMC-J}
\nu\times\B^0\in T\!H^{1/2}(\p\O,\Bbb R^3).
\eeq
We shall look for weak solutions $\u\in \mH(\O,\curl,\div0)$.
If $\u_0$ is a solution, then the general solution can be written as
$\u=\u_0+\h_1'+\nabla\phi$ for any $\h_1'\in \Bbb H_1(\O)$ and $\phi\in H^1(\O)\cap\text{ker}(\Delta)$.
To avoid non-uniqueness we may look for a solution in $H^1_{n0}(\O,\div0)\cap\Bbb H_1(\O)^\perp_{L^2(\O)}$.

Denote by  $\g_\tau$ the tangential trace map $\u\mapsto \nu\times\u$. It yields a tangential trace map
\eq\label{T1T2}
T_1=\g_\tau|_{\mH^\Gamma(\O,\div0)}: \mH^\Gamma(\O,\div0)\mapsto T_1[\mH^\Gamma(\O,\div0)].
\eeq
A necessary condition for \eqref{MC-J} to have a weak solution is $\J\in \mH^\Gamma(\O,\div0)$ and
\eq\label{condJB}
\nu\times\B^0\in T_1[\mH^\Gamma(\O,\div0)].
\eeq
In fact, if \eqref{MC-J} has a weak solution $\u\in H^1(\O,\Bbb R^3)$, then $\curl\u\in \mH^\Gamma(\O,\div0)$, hence
$\nu\times\B^0=\nu\times \curl\u=T_1(\curl\u)\in T_1[\mH^\Gamma(\O,\div0)].$

\begin{Rem}\label{Rem-MCJ-1}
{\rm (i)} If $\nu\times\B^0$ has a  divergence-free $L^2$ extension to $\O$, namely if there exists $\B\in\mH(\O,\div0)$ such that $\nu\times\B=\nu\times\B^0$ on $\p\O$, then \eqref{condJB} is satisfied.

{\rm (ii)} If $\O$ is a bounded domain in $\Bbb R^3$ with a $C^2$ boundary and $\nu\times\B^0\in T\!H^{1/2}(\p\O,\Bbb R^3)$, then \eqref{condJB} is satisfied.
\end{Rem}

\begin{proof} (i) If $\nu\times\B^0$ has an $L^2$ extension $\B\in \mH(\O,\div0)$, we can write
$\B=\B_1+\h$ with $\B_1\in \mH^\Gamma(\O,\div0)$ and $\h\in\Bbb H_2(\O)$.
Then $\nu\times\B^0=\nu\times\B=\nu\times\B_1\in T_1[\mH^\Gamma(\O,\div0)].$

(ii) If $\p\O\in C^2$ and $\nu\times\B^0\in T\!H^{1/2}(\p\O,\Bbb R^3)$, then $\B^0_T=(\nu\times\B^0)\times\nu\in T\!H^{1/2}(\p\O,\Bbb R^3)$. So there exists a tangential component-preserving and divergence-free extension $\B\in \mH(\O,\div0)$ such that $\B_T=\B^0_T$, see \cite{P2}. Then $\nu\times\B=\nu\times\B_T=\nu\times\B^0_T=\nu\times\B^0$. Then from (i) we see that $\nu\times\B^0$ satisfies \eqref{condJB}.
\end{proof}

Assume  $\nu\times\B^0$ satisfies \eqref{condJB}. Then there exists $\bold b\in \mH^\Gamma(\O,\div0)$ such that
$\nu\times\bold b=\nu\times\B^0$, and  we can find $\U\in H^1_{n0}(\O,\div0)$ such that $\curl\U=\bold b$.
Let $\w=\u-\U$. Then \eqref{MC-J} is reduced to the following system
\eq\label{MC-J1}
\left\{\aligned
&\curl [\mH(x,\curl\w+\bold b(x)+\h_2(x))]=\J(x)+\h_1(x)\q &\text{in }\O,\\
&\div\w=0\q &\text{in }\O,\\
&\nu\times\curl\w=\0\q& \text{on }\p\O.
\endaligned\right.
\eeq

\subsection{Reduce \eqref{MC-J} to a nonlinear first order div-curl system \eqref{MC-J2}-\eqref{condZ}}\

\begin{Lem}\label{Lem-equiv1-MC-J} Let $\O, \J, \h_1, \h_2$ satisfy \eqref{cond-MJ} and $\bold b\in \mH^\Gamma(\O,\div0)$.
\begin{itemize}
\item[(i)] If $\w\in H^1(\O,\div0)$ is a weak solution of \eqref{MC-J1}, and let $\Z=\curl\w$, then
$\Z$ is a weak solution of
\eq\label{MC-J2}
\left\{\aligned
&\curl [\mH(x,\Z+\bold b(x)+\h_2(x))]=\J(x)+\h_1(x)\q &\text{\rm in }\O,\\
&\div\Z=0\q &\text{\rm in }\O,\\
&\nu\times\Z=\0\q& \text{\rm on }\p\O,
\endaligned\right.
\eeq
and
\eq\label{condZ}
\Z\in \mH^\Gamma(\O,\div0)\cap L^{2,-1/2}_{t0}(\O,\Bbb R^3).
\eeq

\item[(ii)] If \eqref{MC-J2} has a solution $\Z$ with
\eq\label{condZ1}
\Z\in \mH(\O,\div0)\cap L^{2,-1/2}_{t0}(\O,\Bbb R^3),
\eeq
then there exist $\hat\h_2\in \Bbb H_2(\O)$ and $\bold w\in H^1_{n0}(\O,\div0)$, such that $\curl\w=\Z-\hat\h_2$, and $\w$ is a weak solution of \eqref{MC-J1} with $\h_2$ replaced by $\h_2+\hat\h_2$.
If furthermore $\Z$ satisfies \eqref{condZ}, then $\hat\h_2=\0$, hence $\w$ is a solution of \eqref{MC-J1} with the given $\h_2$.
\end{itemize}
\end{Lem}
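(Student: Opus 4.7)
Part (i) is essentially verification. Setting $\Z=\curl\w$, the identity $\div\curl = 0$ gives $\div\Z=0$; the first equation of \eqref{MC-J2} follows by direct substitution into the first equation of \eqref{MC-J1}; and the boundary condition $\nu\times\Z = \nu\times\curl\w = \0$ is literally the third line of \eqref{MC-J1}. For the regularity \eqref{condZ}, I would invoke the first line of \eqref{im-curl}: since $\w \in H^1(\O,\Bbb R^3)$, we have $\Z = \curl\w \in \curl H^1(\O,\Bbb R^3) = \mH^\Gamma(\O,\div0)$. Combined with the boundary condition $\nu\times\Z=\0$ (interpreted through the weak formulation of \eqref{MC-J1}, where $\nu\times\curl\w \in H^{-1/2}(\p\O,\Bbb R^3)$ is well defined since $\curl[\mH(x,\cdot)]\in L^2(\O)$), this places $\Z$ in $L^{2,-1/2}_{t0}(\O,\Bbb R^3)$ as required.

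The substantive direction is Part (ii). Given $\Z$ satisfying \eqref{MC-J2} and \eqref{condZ1}, I apply the Hodge-type decomposition stated in the second equality of \eqref{dec-0}:
$$\mH(\O,\div0) = \mH^\Gamma(\O,\div0)\oplus_{L^2(\O)}\Bbb H_2(\O),$$
to split $\Z = \Z_1 + \hat\h_2$ with $\Z_1 \in \mH^\Gamma(\O,\div0)$ and $\hat\h_2 \in \Bbb H_2(\O)$. By the first line of \eqref{im-curl}, the curl operator maps $H^1_{n0}(\O,\div0)$ onto $\mH^\Gamma(\O,\div0)$, so there exists $\w \in H^1_{n0}(\O,\div0)$ with $\curl\w = \Z_1 = \Z - \hat\h_2$.

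I then verify that this $\w$ is a weak solution of \eqref{MC-J1} with $\h_2$ replaced by $\h_2+\hat\h_2$. By construction $\div\w = 0$. The key algebraic identity is
$$\curl\w + \bold b + (\h_2+\hat\h_2) \;=\; (\Z-\hat\h_2) + \bold b + \h_2 + \hat\h_2 \;=\; \Z + \bold b + \h_2,$$
so the first equation of \eqref{MC-J1} (with the modified $\h_2$) is identical to the first equation of \eqref{MC-J2}. For the tangential boundary condition,
$$\nu\times\curl\w = \nu\times\Z - \nu\times\hat\h_2 = \0 - \0 = \0\qquad\text{on }\p\O,$$
where $\nu\times\Z=\0$ comes from \eqref{MC-J2} and $\nu\times\hat\h_2=\0$ follows from $\hat\h_2\in\Bbb H_2(\O)$ (since $\u_T=\0$ on $\p\O$ is part of the definition of $\Bbb H_2$). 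Finally, if $\Z$ satisfies the stronger condition \eqref{condZ}, i.e., $\Z\in\mH^\Gamma(\O,\div0)$, then the uniqueness of the $L^2$-orthogonal splitting \eqref{dec-0} forces $\hat\h_2=\0$, so $\w$ solves \eqref{MC-J1} with the original $\h_2$.

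The only mildly delicate point I anticipate is giving a careful distributional meaning to $\nu\times\curl\w$ and $\nu\times\Z$ in the appropriate trace space (needed so the boundary conditions in \eqref{MC-J1} and \eqref{MC-J2} are literally comparable), but this is already encoded by the space $L^{2,-1/2}_{t0}(\O,\Bbb R^3)$ defined in the preliminaries, and for the given regularity of $\w$ and $\Z$ the standard trace theory for $\mH(\O,\curl)$-type spaces suffices.
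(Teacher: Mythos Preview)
Your proof is correct and follows essentially the same route as the paper: part (i) is direct verification, and for part (ii) you use the orthogonal decomposition $\mH(\O,\div0)=\mH^\Gamma(\O,\div0)\oplus_{L^2(\O)}\Bbb H_2(\O)$ from \eqref{dec-0} to write $\Z=\Z_1+\hat\h_2$, invoke \eqref{im-curl} to realize $\Z_1$ as $\curl\w$ with $\w\in H^1_{n0}(\O,\div0)$, and then check the resulting system. Your explicit justification of $\nu\times\curl\w=\0$ via $\nu\times\hat\h_2=\0$ is a detail the paper leaves implicit.
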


\begin{proof} (i) is obvious. We prove (ii).
If \eqref{MC-J2} has a solution $\Z$ satisfying \eqref{condZ1},
then since $\div\Z=0$, using the second equality of \eqref{dec-0} we can write
$\Z=\bold g+\hat\h_2$ with $\bold g\in \mH^\Gamma(\O,\div0)$ and $\hat\h_2\in \Bbb H_2(\O)$.
Take $\bold w\in H^1_{n0}(\O,\div0)$ such that $\curl\bold w=\bold g$. Then $\bold w$ satisfies
$$
\left\{\aligned
&\curl [\mH(x,\curl \bold w+\bold b(x)+\h_2'(x))]=\J(x)+\h_1(x)\q &\text{in }\O,\\
&\div\bold w=0\q &\text{in }\O,\\
&\nu\times\curl\bold w=\0\q& \text{on }\p\O,
\endaligned\right.
$$
where $\h_2'=\h_2+\hat\h_2\in \Bbb H_2(\O)$. Thus,  \eqref{MC-J} with the given $\h_2$ replaced by $\h_2'$ has a solution $\u=\w+\U$.
If $\Z$  satisfies \eqref{condZ}, then $\hat\h_2=\0$, and  $\curl\bold w=\Z$. Hence $\u=\bold w+\U$ is  a solution of \eqref{MC-J1} for the given $\h_2$.
\end{proof}

\begin{Lem}\label{Lem-equiv2-MC-J}
Assume $\O, \J, \h_1, \h_2$ satisfy \eqref{cond-MJ}, $\mH(x,\z)$ is of $C^1$ and satisfies $(H_3)$,  and
 $\bold b\in C^1(\bar\O,\Bbb R^3)\cap \mH^\Gamma(\O,\div0)$ satisfying $\nu\times\bold b=\nu\times\B^0$.
 \begin{itemize}
 \item[(i)] If $\Z$ is a $C^1$ solution of \eqref{MC-J2} and set $\Y(x)=\mH(x,\Z(x)+\bold b(x)+\h_2(x))$,
 then $\Y(x)$ is a $C^1$ solution of
 \eq\label{MC-J4}
\left\{\aligned
&\curl \Y=\J(x)+\h_1(x)\q &\text{\rm in }\O,\\
&\div\mB(x,\Y)=0\q &\text{\rm in }\O,\\
&\nu\times\mB(x,\Y)=\nu\times\B^0\q& \text{\rm on }\p\O.
\endaligned\right.
\eeq
  \item[(ii)] If $\Y(x)$ is a $C^1$ solution of \eqref{MC-J4} and set
  \eq\label{bd-Z}
\Z(x)=\mB(x,\Y(x))-\bold b(x)-\h_2(x).
\eeq
then $\Z$ is a $C^1$ solution of \eqref{MC-J2}. Moreover, $\Z\in \mH^\Gamma(\O,\div0)$ if and only if
\eq\label{orth6.9}
\mB(x,\Y(x))-\bold b(x)-\h_2\perp_{L^2(\O)}\Bbb H_2(\O).
\eeq
\end{itemize}
\end{Lem}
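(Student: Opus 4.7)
The plan is to use the inversion relation $(H_3)$ between $\mH$ and $\mB$ to pass directly between solutions of \eqref{MC-J2} and \eqref{MC-J4}, together with elementary computations of $\div$ and $\nu\times$ acting on sums. The orthogonal decomposition of $\mH(\O,\div0)$ from \eqref{dec-0} will take care of the final characterization.

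For part (i), starting from $\Y(x)=\mH(x,\Z(x)+\bold b(x)+\h_2(x))$, condition $(H_3)$ gives the pointwise identity $\mB(x,\Y)=\Z+\bold b+\h_2$. The curl equation of \eqref{MC-J4} is then immediate: $\curl\Y=\curl[\mH(x,\Z+\bold b+\h_2)]=\J+\h_1$. For the second equation I would compute $\div\mB(x,\Y)=\div\Z+\div\bold b+\div\h_2$; the three terms vanish because $\Z$ satisfies \eqref{MC-J2}, $\bold b\in\mH^\Gamma(\O,\div0)$, and $\h_2\in\Bbb H_2(\O)$. For the boundary equation, $\nu\times\mB(x,\Y)=\nu\times\Z+\nu\times\bold b+\nu\times\h_2$; here $\nu\times\Z=\0$ by \eqref{MC-J2}, $\nu\times\bold b=\nu\times\B^0$ by the choice of $\bold b$, and $\nu\times\h_2=\0$ because $\h_{2,T}=\0$ in the definition of $\Bbb H_2(\O)$.

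Part (ii) is symmetric: defining $\Z$ by \eqref{bd-Z} yields the identity $\Z+\bold b+\h_2=\mB(x,\Y)$, so $\mH(x,\Z+\bold b+\h_2)=\Y$ by $(H_3)$, and the same three arithmetic identities run in reverse give the three conditions in \eqref{MC-J2}. For the final characterization, the computation above shows $\Z\in\mH(\O,\div0)$ with $\nu\times\Z=\0$ automatically. By the orthogonal decomposition
$$\mH(\O,\div0)=\mH^\Gamma(\O,\div0)\oplus_{L^2(\O)}\Bbb H_2(\O)$$
in \eqref{dec-0}, membership $\Z\in\mH^\Gamma(\O,\div0)$ is equivalent to $\Z\perp_{L^2(\O)}\Bbb H_2(\O)$, which is exactly \eqref{orth6.9}.

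No step here is a real obstacle; the lemma is essentially a bookkeeping exercise in the inverse relation between $\mH$ and $\mB$. The one point requiring mild care is the boundary algebra in part (i): one must remember that $\h_2\in\Bbb H_2(\O)$ not only has $\div\h_2=0$ but also $\nu\times\h_2=\0$, so that the boundary trace collapses cleanly to $\nu\times\B^0$. All other computations are term-by-term manipulation.
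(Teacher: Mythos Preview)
Your proposal is correct and follows essentially the same approach as the paper: the paper declares (i) ``obvious'' and for (ii) invokes $(H_3)$ to get $\mH(x,\Z+\bold b+\h_2)=\Y$ and then appeals to the second equality of \eqref{dec-0} for the orthogonality characterization, exactly as you do. You have simply spelled out the term-by-term verification that the paper omits.
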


\begin{proof} (i) is obvious and we prove (ii). If $\Y$ is a solution of \eqref{MC-J4} and set $\Z$ by \eqref{bd-Z}, then $\Z$ is of $C^1$ and
$\mH(x,\Z(x)+\bold b(x)+\h_2(x))=\Y$. Using this and \eqref{MC-J4} we see that $\Z$ is a solution of \eqref{MC-J2}.
Since $\Z\in \mH(\O,\div0)$, from the second equality of \eqref{dec-0} we see that, $\Z\in \mH^\Gamma(\O,\div0)$ if and only if \eqref{orth6.9} holds.
\end{proof}

\subsection{Reduce \eqref{MC-J2} to a second order scalar equation \eqref{eqdivB-MC-J}}\

\begin{Lem}\label{Lem-equiv3-MC-J} Assume $\O, \J, \h_1, \h_2$ satisfy \eqref{cond-MJ}, $\mH(x,\z)$ satisfies $(H_3)$ and $\bold b\in C^1(\bar\O,\Bbb R^3)\cap \mH^\Gamma(\O,\div0)$ satisfying $\nu\times\bold b=\nu\times\B^0$. Then we can find $\Y^0$ satisfying
\eq\label{Y0}
\Y^0\in H^1_{n0}(\O,\div0),\q  \curl \Y^0=\J+\h_1,\q \Y^0\perp_{L^2(\O)}\Bbb H_1(\O).
\eeq
\begin{itemize}
\item[(i)] If \eqref{MC-J4} has a solution $\Y\in \mH(\O,\curl)$, then $\Y$ can be written as
\eq\label{form-Y}
\Y=\Y^0+\h^0_1+\nabla\phi,\q \h_1^0\in\Bbb H_1(\O),\q\phi\in H^1(\O),
\eeq
and  $\phi$ is a weak solution of the following problem
\eq\label{eqdivB-MC-J}
\left\{\aligned
&\div [\mB(x,\Y^0+\h^0_1+\nabla\phi)]=0\q&\text{\rm in }\O,\\
&\nu\times\mB(x,\Y^0+\h^0_1+\nabla\phi)=\nu\times\B^0\q&\text{\rm on }\p\O.
\endaligned\right.
\eeq
\item[(ii)] If there exists $\h_1^0\in \Bbb H_1(\O)$ such that \eqref{eqdivB-MC-J} has a solution $\phi\in H^1(\O)$, and set $\Y$ by \eqref{form-Y}, then $\Y$ is a solution of \eqref{MC-J4}.
\end{itemize}
\end{Lem}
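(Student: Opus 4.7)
The proof will be bookkeeping built on the orthogonal decomposition from \eqref{dec-0}, namely $\mathcal H(\O,\curl0)=\grad H^1(\O)\oplus_{L^2(\O)}\Bbb H_1(\O)$, together with the image characterization \eqref{im-curl}.

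First I would construct the reference field $\Y^0$. Since \eqref{cond-MJ} gives $\J+\h_1\in\mH^\Gamma(\O,\div0)$, by \eqref{im-curl} there exists $\tilde\Y^0\in H^1_{n0}(\O,\div0)$ with $\curl\tilde\Y^0=\J+\h_1$. Subtracting its $L^2$-orthogonal projection onto the finite-dimensional space $\Bbb H_1(\O)$ and noting that every element of $\Bbb H_1(\O)$ is curl-free, divergence-free, and has vanishing normal trace, the difference $\Y^0$ still lies in $H^1_{n0}(\O,\div0)$, still satisfies $\curl\Y^0=\J+\h_1$, and now $\Y^0\perp_{L^2(\O)}\Bbb H_1(\O)$, giving all properties listed in \eqref{Y0}.

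For part (i), let $\Y\in\mH(\O,\curl)$ solve \eqref{MC-J4}. Then $\curl(\Y-\Y^0)=\0$ in $\O$, so $\Y-\Y^0\in\mH(\O,\curl0)$. Applying the first line of \eqref{dec-0} produces $\phi\in H^1(\O)$ and $\h_1^0\in\Bbb H_1(\O)$ with $\Y-\Y^0=\nabla\phi+\h_1^0$, which is exactly \eqref{form-Y}. Substituting this representation into the second and third equations of \eqref{MC-J4} yields \eqref{eqdivB-MC-J}, where the weak interpretation of the tangential boundary condition is the one inherited from $\Y$ lying in $\mH(\O,\curl)$ together with $\mB(x,\Y)\in\mH(\O,\div0)$ (so that $\nu\times\mB(x,\Y)$ is well-defined in $H^{-1/2}(\p\O,\R^3)$).

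For part (ii), given $\phi\in H^1(\O)$ and $\h_1^0\in\Bbb H_1(\O)$ solving \eqref{eqdivB-MC-J}, define $\Y$ by \eqref{form-Y}. Then $\curl\Y=\curl\Y^0+\curl\h_1^0+\curl\nabla\phi=\J+\h_1$, since $\h_1^0$ is curl-free and $\curl\nabla\phi=\0$; the remaining two conditions of \eqref{MC-J4} are precisely those imposed by \eqref{eqdivB-MC-J}. Hence $\Y$ solves \eqref{MC-J4}.

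I expect the main obstacle to be purely notational: pinning down the exact weak sense in which the tangential boundary equation in \eqref{eqdivB-MC-J} is to be read, so that the equivalence between \eqref{MC-J4} and \eqref{eqdivB-MC-J} is not just formal. The cleanest way is to interpret \eqref{eqdivB-MC-J} via $\mB(x,\Y^0+\h_1^0+\nabla\phi)-\B^*\in\mH^\Sigma_0(\O,\div0)$ for a fixed divergence-free, tangential-trace-preserving extension $\B^*$ of $\B^0$ (available by the extension result cited in Remark \ref{Rem-MCJ-1}(ii) and \cite{P2}); once this is fixed, both directions (i) and (ii) are straightforward substitutions, and no extra analytic input beyond \eqref{dec-0} and \eqref{im-curl} is required.
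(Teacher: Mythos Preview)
Your proposal is correct and follows essentially the same route as the paper: existence of $\Y^0$ from $\J+\h_1\in\mH^\Gamma(\O,\div0)$ via \eqref{im-curl}, then the decomposition $\mH(\O,\curl0)=\grad H^1(\O)\oplus_{L^2(\O)}\Bbb H_1(\O)$ from \eqref{dec-0} applied to $\Y-\Y^0$, followed by direct substitution into \eqref{MC-J4}. Your additional care in spelling out the weak sense of the tangential boundary condition and the explicit removal of the $\Bbb H_1(\O)$ component from $\tilde\Y^0$ is more detailed than the paper's treatment, but the argument is the same.
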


\begin{proof} We only need to prove (i). Assume \eqref{MC-J4} has a solution $\Y\in \mH(\O,\curl)$. Then
$$
\J+\h_1=\curl\Y\in \curl\mH(\O,\curl)=\curl H^1_{n0}(\O,\div0)= \mH^\Gamma(\O,\div0).
$$
So there exists a unique $\Y^0$ satisfying \eqref{Y0}.
Since $\curl(\Y-\Y^0)=\0$, from the first equality in \eqref{dec-0} we get \eqref{form-Y}.  Since $\div(\bold b+\h_2)=0$ in $\O$ and $\nu\times(\bold b+\h_2)=\nu\times\B^0$ on $\p\O$, from \eqref{MC-J4} we see that $\phi$ satisfies \eqref{eqdivB-MC-J}.
\end{proof}

\subsection{Reduce \eqref{eqdivB-MC-J} to Dirichlet problem \eqref{eqphi}}\

We assume that $\mH(x,\z)$ satisfies $(H_3)$ and the following condition:
\begin{itemize}
\item[$(H_{3T})$] For any $x\in\p\O$ and $\y, \z\in\Bbb R^3$, the equality $\z_T=\mB_T(x,\y)$ holds if and only if $\y_T=\mH_T(x,\z_T)$.
\end{itemize}
Here we use the notation
$$\z_T=\nu\times(\z\times\nu),\q \mH_T(x,\z)=\nu\times[\mH(x,\z)\times\nu],\q \mB_T(x,\w)=\nu\times[\mB(x,\w)\times\nu].
$$
We shall show that, BVP \eqref{eqdivB-MC-J} is equivalent to the following Dirichlet problem
\eq\label{eqphi}
\div [\mB(x,\Y^0+\h^0_1+\nabla\phi)]=0\q\text{\rm in }\O,\q
\phi=\phi_0\q\text{\rm on }\p\O,
\eeq
where $\phi_0$ is any function on $\p\O$ satisfying
\eq\label{dphi0}
(\nabla\phi_0)_T=\mH_T(x,\B^0_T)-\Y^0_T-\h^0_{1,T}\q\text{\rm on }\p\O.
\eeq
For a function $\phi$ defined on $\p\O$, we denote by $\nabla_T\phi$ the tangential gradient of $\phi$. If $\phi$ is extended over $\bar\O$, then
$$
\nabla_T\phi=(\nabla\phi)_T=(\nu\times\nabla\phi)\times\nu.
$$
Recall (see \cite[Lemma 2.5]{NW}) that, given a vector field $\w\in TC^1(\p\O,\Bbb R^3)$, a necessary and sufficient condition for existence of a function $\phi$ satisfying $\nabla_T\phi=\w$ is
\eq\label{A.2}
\nu\cdot\curl\w=0,\q \int_{\p\O} \langle\nu\times\w, \h_1\rangle dS=0\q\forall \h_1\in \Bbb H_1(\O).
\eeq

\begin{Prop}\label{Prop-eqdivB} Assume $\O, \J, \h_1$ satisfy \eqref{cond-MJ}, $\nu\times\B^0$ satisfies \eqref{condMC-J}, $\mH$ satisfies $(H_3)$ and  $(H_{3T})$.
\begin{itemize}
\item[(i)] Given $\Y^0$ satisfying \eqref{Y0} and $\h_1^0\in\Bbb H_1(\O)$, a necessary condition for solvability of \eqref{eqdivB-MC-J} is
\eq\label{condB3}
\aligned
&\nu\cdot\curl[\mH(x,\B^0_T)]=\nu\cdot\J\q\text{\rm on }\p\O,\\
&\int_{\p\O}\langle\nu\times[\mH_T(x,\B^0_T)-\Y^0-\h^0_{1}],\hat\h_1\rangle dS=0,\q\forall \hat\h_1\in\Bbb H_1(\O).
\endaligned
\eeq
\item[(ii)] If  $\B^0_T$ satisfies \eqref{condB3} and $\h^0_1\in\Bbb H_1(\O)$, then there exists a function $\phi_0$ on $\p\O$, which depends on $\h^0_1$,  such that \eqref{dphi0} holds, and BVP  \eqref{eqdivB-MC-J} is reduced to the Dirichlet BVP \eqref{eqphi}.

\item[(iii)] In addition to the assumption in (ii), if furthermore $\mH(x,\z)$ satisfies $(H_1)$ with $g_2\in L^2(\O)\cap L^2(\p\O)$, then
we can choose $\phi_0\in H^1(\p\O)$ such that
$$
\|\phi_0\|_{H^1(\p\O)}\leq C(\O)\{\|\mH_T(x,\B^0_T)\|_{L^2(\p\O)}+\|\J+\h_1\|_{L^2(\O)}+\|\h^0_1\|_{L^2(\O)}\}.
$$
Moreover, if $\p\O$ is of $C^{2,\a}$, $\J\in C^\a(\bar\O,\Bbb R^3)$, and if the right side of \eqref{dphi0} is of $C^{1,\a}$, then we can choose $\phi_0\in C^{2,\a}(\p\O)$ such that
$$
\|\phi_0\|_{C^{2,\a}(\p\O)}\leq C(\O,\a)\{\|\mH_T(x,\B^0_T)\|_{C^{1,\a}(\p\O)}+\|\J\|_{C^\a(\bar\O)}+
\|\h_1\|_{L^2(\O)}+\|\h^0_1\|_{L^2(\O)}\}.
$$
\end{itemize}
\end{Prop}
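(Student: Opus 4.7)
The plan is to read the boundary condition $\nu\times\mB(x,\Y)=\nu\times\B^0$ in \eqref{eqdivB-MC-J} through the pointwise inverse relation $(H_{3T})$, turning it into a prescribed value of the tangential gradient $\nabla_T\phi$ on $\p\O$, and then invoke the compatibility criterion \eqref{A.2} (i.e.\ \cite[Lemma 2.5]{NW}) to produce $\phi_0$. For part (i), suppose $\phi$ solves \eqref{eqdivB-MC-J} and set $\Y=\Y^0+\h^0_1+\nabla\phi$, so that $\curl\Y=\J+\h_1$ in $\O$ and $\mB(x,\Y)_T=\B^0_T$ on $\p\O$. Then $(H_{3T})$ forces $\Y_T=\mH_T(x,\B^0_T)$ on $\p\O$. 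Since the normal component of a curl on $\p\O$ depends only on the tangential trace of the field, from $\nu\cdot\curl\Y=\nu\cdot(\J+\h_1)=\nu\cdot\J$ (using $\nu\cdot\h_1=0$) one obtains the first equality of \eqref{condB3}. Setting $\w:=\mH_T(x,\B^0_T)-\Y^0_T-\h^0_{1,T}$, the identity $\w=\nabla_T\phi$ on $\p\O$ forces the second equality of \eqref{condB3} as the integral compatibility condition in \eqref{A.2}.

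For part (ii) the same definition $\w:=\mH_T(x,\B^0_T)-\Y^0_T-\h^0_{1,T}$ is used in reverse. A direct check gives $\nu\cdot\curl\w=0$ on $\p\O$: normal curls depend only on tangential traces, the first equality of \eqref{condB3} handles $\mH(x,\B^0_T)$, the identity $\curl\Y^0=\J+\h_1$ together with $\nu\cdot\h_1=0$ handles $\Y^0_T$, and $\curl\h^0_1=\0$ handles $\h^0_{1,T}$. The second equality of \eqref{condB3} is exactly the integral condition in \eqref{A.2}. Thus \cite[Lemma 2.5]{NW} produces $\phi_0$ on $\p\O$ with $\nabla_T\phi_0=\w$. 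If now $\phi$ solves the Dirichlet problem \eqref{eqphi} with boundary value $\phi_0$, then on $\p\O$ one has $\Y_T=\Y^0_T+\h^0_{1,T}+\nabla_T\phi_0=\mH_T(x,\B^0_T)$, so $(H_{3T})$ gives $\mB(x,\Y)_T=\B^0_T$, i.e.\ $\nu\times\mB(x,\Y)=\nu\times\B^0$; combined with $\div[\mB(x,\Y)]=0$ in $\O$ this is precisely \eqref{eqdivB-MC-J}.

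For part (iii), I would pin $\phi_0$ down by requiring $\int_{\Gamma_j}\phi_0\,dS=0$ on each connected component $\Gamma_j$ and solving the Laplace--Beltrami equation $-\Delta_{\p\O}\phi_0=-\mathrm{div}_{\p\O}\w$ on $\p\O$; standard elliptic theory on the compact manifold $\p\O$ then yields $\|\phi_0\|_{H^1(\p\O)}\le C(\O)\|\w\|_{L^2(\p\O)}$. The three pieces of $\w$ are estimated as follows: $\|\mH_T(x,\B^0_T)\|_{L^2(\p\O)}$ appears already on the right-hand side of the claimed bound; $\|\Y^0_T\|_{L^2(\p\O)}\le C\|\Y^0\|_{H^1(\O)}\le C\|\curl\Y^0\|_{L^2(\O)}=C\|\J+\h_1\|_{L^2(\O)}$ via the trace theorem and the div-curl-gradient inequality applied to \eqref{Y0}; and $\|\h^0_{1,T}\|_{L^2(\p\O)}\le C\|\h^0_1\|_{H^1(\O)}\le C\|\h^0_1\|_{L^2(\O)}$ via \eqref{H1H2}. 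The $C^{2,\a}$ version follows by upgrading $\Y^0$ to $C^{1,\a}(\bar\O)$ using Schauder-type regularity for the div-curl system together with $\curl\Y^0=\J\in C^\a(\bar\O)$ and $\p\O\in C^{2,\a}$, and then applying Schauder estimates on $\p\O$ for the Laplace--Beltrami operator.

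The main obstacle I anticipate is the careful verification in part (i) that the two identities in \eqref{condB3} are exactly the necessary conditions produced by $\Y_T=\mH_T(x,\B^0_T)$, $\nu\cdot\curl\Y=\nu\cdot\J$, and the integrability condition \eqref{A.2} for $\w$ to be a tangential gradient, with nothing beyond; together with the careful bookkeeping of the constants on each $\Gamma_j$ inherent in Lemma 2.5 of \cite{NW}. Once these links are pinned down, both the existence reduction in (ii) and the norm bookkeeping in (iii) are routine consequences of cited estimates.
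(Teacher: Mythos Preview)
Your proposal is correct and follows essentially the same route as the paper: converting the boundary condition via $(H_{3T})$ into $(\nabla\phi)_T=\mH_T(x,\B^0_T)-\Y^0_T-\h^0_{1,T}$, invoking the compatibility criterion \eqref{A.2} (\cite[Lemma~2.5]{NW}) both to derive \eqref{condB3} and to produce $\phi_0$, and then estimating the three pieces of $\w$ via the trace theorem, the div-curl estimate for $\Y^0$ from \eqref{Y0}, and \eqref{H1H2}. Your explicit normalization of $\phi_0$ through the Laplace--Beltrami equation on $\p\O$ makes the $H^1(\p\O)$ and $C^{2,\a}(\p\O)$ bounds more transparent than the paper's terse ``this implies $\phi_0\in H^1(\p\O)$'', but the underlying argument is the same; one small slip is that in the $C^{2,\a}$ step you write $\curl\Y^0=\J$ rather than $\curl\Y^0=\J+\h_1$, though this is harmless since $\h_1\in\Bbb H_1(\O)\subset C^{1,\a}(\bar\O,\Bbb R^3)$.
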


\begin{proof}
 (i). By $(H_{3T})$, boundary condition in \eqref{eqdivB-MC-J} is equivalent to
\eq\label{bd3}
(\nabla\phi)_T=\mH_T(x,\B^0_T)-\Y^0_T-\h^0_{1,T}\q\text{on }\p\O.
\eeq
\eqref{bd3} implies that the vector field $\mH_T(x,\B^0_T)-\Y^0_T-\h^0_{1,T}$ is a tangential component of a gradient, which is possible if and only if (see \eqref{A.2})
$$\aligned
&\nu\cdot\curl[\mH_T(x,\B^0_T)-\Y^0_T-\h^0_{1,T}]=0\q\text{on }\p\O,\\
&\int_{\p\O}\langle\nu\times[\mH_T(x,\B^0_T)-\Y^0_T-\h^0_{1,T}],\hat\h_1\rangle dS=0,\q\forall \hat\h_1\in\Bbb H_1(\O).
\endaligned
$$
This condition is equivalent to \eqref{condB3} because
$$\aligned
&\nu\cdot\curl\Y^0_T=\nu\cdot\curl\Y^0=\nu\cdot\J,\q \nu\cdot\curl\h^0_{1,T}=\nu\cdot\curl\h^0_1=0,\\
&\nu\cdot\curl[\mH_T(x,\B^0_T)]=\nu\cdot\curl[\mH(x,\B^0_T)].
\endaligned
$$

(ii). Assume $\B^0_T$ satisfies \eqref{condB3}. Then we can find $\phi_0\in H^1(\O)$ satisfying \eqref{dphi0}, such that
\eqref{bd3} can be written as $(\nabla\phi)_T=(\nabla\phi_0)_T$ on $\p\O$.
So we can reduce \eqref{eqdivB-MC-J} to \eqref{eqphi}.

(iii) From $(H_1)$ we have
$$
\|(\nabla\phi_0)_T\|_{L^2(\p\O)}\leq \|\mH_T(x,\B^0_T)\|_{L^2(\p\O)}+\|\Y^0_T\|_{L^2(\p\O)}+\|\h^0_1\|_{L^2(\p\O)}.
$$
This implies that $\phi_0\in H^1(\p\O)$. From \eqref{Y0} we have
$$
\|\Y^0_T\|_{L^2(\p\O)}\leq C(\O)\|\Y^0\|_{H^1(\O)}\leq C(\O)\|\J+\h_1\|_{L^2(\O)}.
$$
Using this and \eqref{H1H2} we have
$$
\|\phi_0\|_{H^1(\p\O)}\leq C\{\|\mH_T(x,\B^0_T)\|_{L^2(\p\O)}+\|\J+\h_1\|_{L^2(\O)}+\|\h^0_1\|_{L^2(\O)}\}.
$$

Next assume \eqref{condB3} holds, $\p\O$ is of $C^{2,\a}$, and the right side of \eqref{dphi0} is of $C^{1,\a}$. Then $(\nabla\phi_0)_T$ is of $C^{1,\a}$, so $\phi_0\in C^{2,\a}(\p\O)$. Since $\Y^0\in H^1_{n0}(\O,\div0)\cap \Bbb H_1(\O)^\perp_{L^2(\O)}$ and $\curl\Y^0=\J+\h_1\in C^\a(\bar\O,\Bbb R^3)$, by the Schauder regularity of the div-curl system we have $\Y^0\in C^{1,\a}(\bar\O,\Bbb R^3)$, and
$$
\|\Y^0\|_{C^{1,\a}(\bar\O)}\leq C(\O,\a)\|\J+\h_1\|_{C^\a(\bar\O)}\leq C(\O,\a)(\|\h_1\|_{L^2(\O)}+\|\J\|_{C^\a(\bar\O)}).
$$
Here we used the fact that $\h_1, \h^0_1\in \Bbb H_1(\O)$, hence
$$\|\h_1\|_{C^{1,\a}(\bar\O)}\leq C(\O,\a)\|\h_1\|_{L^2(\O)},\q \|\h^0_1\|_{C^{1,\a}(\bar\O)}\leq C(\O,\a)\|\h^0_1\|_{L^2(\O)}.
$$
Finally from \eqref{bd3} we have
$$\aligned
\|(\nabla\phi_0)_T\|_{C^{1,\a}(\p\O)}\leq& \|\mH_T(x,\B^0_T)\|_{C^{1,\a}(\p\O)}+C(\O,\a)\{\|\h_1\|_{L^2(\O)}+\|\h^0_1\|_{L^2(\O)}+\|\J\|_{C^\a(\bar\O)}\}.
\endaligned
$$
\end{proof}

\subsection{Solvability of \eqref{eqphi}}\

The Dirichlet problem \eqref{eqphi} has been very well studied.
We list the following existence result for \eqref{eqphi}, see for instance \cite[Chpater 4, Theorem 8.3]{LU} and \cite[Theorem 15.11]{GT}.

\begin{Lem}\label{Lem-Dphi} Assume $0<\a<1$, $\O$ is a bounded domain in $\Bbb R^3$ with a $C^{2,\a}$ boundary, $\mB\in C^{1,\a}_{\loc}(\bar\O\times\Bbb R^3,\Bbb R^3)$ satisfies $(B_1), (B_2)$, $\Y^0, \h^0_1\in C^{1,\a}(\bar\O,\Bbb R^3)$ and $\phi_0\in C^{2,\a}(\p\O)$. Then \eqref{eqphi} has a unique solution $\phi\in C^{2,\a}(\bar\O)$.
\end{Lem}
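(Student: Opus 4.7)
The plan is to handle existence, uniqueness, and regularity separately, reducing to known theorems in the quoted references.

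First I would reduce the Dirichlet condition to homogeneous data. Since $\p\O$ is of $C^{2,\a}$ and $\phi_0\in C^{2,\a}(\p\O)$, extend $\phi_0$ to $\Phi_0\in C^{2,\a}(\bar\O)$ and write $\phi=\Phi_0+\psi$ with $\psi\in H^1_0(\O)$. Then $\psi$ must solve
\eq\label{planeq}
\div[\mB(x,\Y^0+\h^0_1+\nabla\Phi_0+\nabla\psi)]=0\q\text{in }\O,\q \psi=0\q\text{on }\p\O.
\eeq
Define $\mT:H^1_0(\O)\to H^1_0(\O)^*$ by
$$\langle \mT(\psi),\eta\rangle=\int_\O\langle\mB(x,\Y^0+\h^0_1+\nabla\Phi_0+\nabla\psi),\nabla\eta\rangle dx.$$
The growth bound in $(B_1)$ shows $\mT$ is well-defined and bounded on bounded sets, while $(B_2)$ together with the monotonicity inequality \eqref{mono-B} gives
$$\langle\mT(\psi_2)-\mT(\psi_1),\psi_2-\psi_1\rangle\geq \lam_0\int_\O|\nabla(\psi_2-\psi_1)|^2 dx,$$
so $\mT$ is strongly monotone, and the Lipschitz bound $|\nabla_\w\mB|\leq N_2$ from $(B_2)$ makes it continuous. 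The Browder-Minty theorem then yields a unique $\psi\in H^1_0(\O)$ with $\mT(\psi)=0$, which is the desired weak solution of \eqref{eqphi}; uniqueness in $H^1$ follows immediately from the strong monotonicity.

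Second, for the regularity upgrade, I would view \eqref{planeq} as a quasilinear divergence-form equation whose principal coefficients $a_{ij}(x,p)=\p B_i/\p w_j(x,p)$ are uniformly elliptic by $(B_2)$ and Hölder continuous in $(x,p)$ by the $C^{1,\a}_{\loc}$ assumption on $\mB$; the lower order terms inherit Hölder regularity from $\Y^0, \h^0_1\in C^{1,\a}(\bar\O,\Bbb R^3)$ and from the $C^{2,\a}$ extension $\Phi_0$. Applying a De Giorgi-Nash-Moser argument gives $\psi\in C^{0,\b}(\bar\O)$ for some $\b>0$, after which the a priori $C^{1,\a}$ estimate for quasilinear divergence-form problems with $C^{2,\a}$ boundary data (Ladyzhenskaya-Uraltseva \cite{LU} Chapter 4, Theorem 8.3, or equivalently Gilbarg-Trudinger \cite{GT} Theorem 15.11) yields $\psi\in C^{1,\a}(\bar\O)$. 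Once this is known, rewriting the equation in non-divergence form produces a linear elliptic equation in $\psi$ whose coefficients are $C^\a(\bar\O)$, so linear Schauder estimates (interior plus boundary) promote $\psi$, and hence $\phi=\Phi_0+\psi$, to $C^{2,\a}(\bar\O)$.

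The main obstacle is the global $C^{1,\a}$ step: interior $C^{1,\a}$ regularity for quasilinear divergence-form equations is classical, but the boundary estimate requires flattening $\p\O$ and controlling the $x$-dependence of $\mB$, which is exactly where the hypotheses $\p\O\in C^{2,\a}$, $\phi_0\in C^{2,\a}(\p\O)$, and the $C^{1,\a}_{\loc}$ regularity of $\mB$ are needed. Since this is precisely the content of the cited theorems in \cite{LU} and \cite{GT}, the proof is completed by citation once the equation has been put in the standard form above, and the subsequent $C^{2,\a}$ upgrade is routine.
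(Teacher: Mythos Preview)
Your proposal is correct and follows essentially the same approach as the paper: reduce to homogeneous Dirichlet data by extending $\phi_0$, apply the Browder--Minty theorem to the strongly monotone operator $\mT$ on $H^1_0(\O)$ built from $\mB$, and then invoke Schauder-type regularity. Your regularity discussion is in fact more explicit than the paper's, which simply says ``by the Schauder regularity theory of elliptic equations'' after obtaining the $H^1_0$ weak solution, whereas you spell out the De Giorgi--Nash--Moser $\to$ $C^{1,\a}$ $\to$ linear Schauder bootstrap.
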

\begin{proof}
Given $\phi_0\in C^{2,\a}(\p\O)$, we can extend $\phi_0$ to a $C^{2,\a}$ function on $\bar\O$ and set $\psi=\phi-\phi_0$. Then $\phi$ solves \eqref{eqphi} if and only if $\psi$ solves
\eq\label{eqphi0}
\left\{\aligned
&\div [\mB(x,\Y^0+\h^0_1+\nabla\phi_0+\nabla\psi)]=0\q&\text{\rm in }\O,\\
&\psi=0\q&\text{\rm on }\p\O.
\endaligned\right.
\eeq
As in the proof of Proposition \ref{Prop-sol} (i), we define a map $\mT$ such that
$$
\langle \mT(\psi),\eta\rangle_{H^{-1}(\O), H^1_0(\O)}=\int_\O \langle\mB(x,\Y^0+\h_1^0+\nabla\phi_0+\nabla\psi), \nabla\eta\rangle dx,\q\forall \psi, \eta\in H^1_0(\O).
$$
Using conditions $(B_1)$ and $(B_2)$ we can show that $\mT: H^1_0(\O)\to H^{-1}(\O)$ is hemi-continuous and strongly monotone, hence it is surjective. So there exists a unique $\psi\in H^1_0(\O)$ such that
$\mT(\psi)=0,$
Thus $\psi$ is the unique solution of \eqref{eqphi0} in $H^1_0(\O)$.

Then using the conditions on $\mB$ and by the Schauder regularity theory of elliptic equations we see that $\psi\in C^{2,\a}(\bar\O)$.
\end{proof}

\subsection{Solvability of \eqref{MC-J}}\

Using Lemma \ref{Lem-Dphi} we have existence for \eqref{MC-J} under the following assumption
\eq\label{cond-for-MC-J}
\aligned
&\text{$\O$ is a bounded domain in $\Bbb R^3$ with a $C^{3,\a}$ boundary,}\q 0<\a<1,\\
&\text{$\mH\in C^{1,\a}_{\loc}(\bar\O\times\Bbb R^3,\Bbb R^3)$ satisfying $(H_1), (H_2), (H_3)$ and $(H_3T)$,}\\
&\J\in C^\a(\bar\O,\Bbb R^3)\cap \mH^\Gamma(\O,\div0), \q \h_1\in \Bbb H_1(\O),\q \nu\times\B^0\in T\!C^{1,\a}(\p\O,\Bbb R^3).
\endaligned
\eeq

\begin{Thm} Assume $\O, \mH, \nu\times\B^0, \J, \h_1$ satisfy \eqref{cond-for-MC-J}, $\Y^0$ is determined by $\J$ and $\h_1$ by \eqref{Y0}.
 For any $\h^0_1\in \Bbb  H_1(\O)$ such that \eqref{condB3} holds,  there exists an $\h_2^0\in \Bbb H_2(\O)$ which depends on $\h_1^0$, such that  \eqref{MC-J} with $\h_2=\h_2^0$ has a solution $\u\in C^{2,\a}(\bar\O,\Bbb R^3)$.

In particular, if $\O$ has no holes, and if there exists $\h^0_1\in \Bbb  H_1(\O)$ such that \eqref{condB3} holds, then \eqref{MC-J} has a solution.
\end{Thm}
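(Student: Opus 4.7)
The plan is to chain the reduction Lemmas \ref{Lem-equiv1-MC-J}--\ref{Lem-equiv3-MC-J} with Proposition \ref{Prop-eqdivB} and Lemma \ref{Lem-Dphi}, then absorb an unavoidable $\Bbb H_2(\O)$-component into $\h^0_2$.

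First I would collect regularity of the auxiliary data. From $(H_1)$--$(H_3)$ and Remark \ref{Rem4.2}, $\mB$ satisfies $(B_1), (B_2)$; the inverse function theorem together with $(H_2)$ and $\mH\in C^{1,\a}_{\loc}$ upgrades this to $\mB\in C^{1,\a}_{\loc}(\bar\O\times\Bbb R^3,\Bbb R^3)$. Since $\J+\h_1\in\mH^\Gamma(\O,\div0)\cap C^\a(\bar\O,\Bbb R^3)$, Schauder regularity for div-curl systems gives a unique $\Y^0\in C^{1,\a}(\bar\O,\Bbb R^3)$ satisfying \eqref{Y0}. I likewise pick $\bold b\in\mH^\Gamma(\O,\div0)\cap C^{1,\a}(\bar\O,\Bbb R^3)$ with $\nu\times\bold b=\nu\times\B^0$, and $\mathcal U\in H^1_{n0}(\O,\div0)\cap C^{2,\a}(\bar\O,\Bbb R^3)$ with $\curl\mathcal U=\bold b$.

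Next I run the reductive chain. With the given $\h^0_1$ satisfying \eqref{condB3}, Proposition \ref{Prop-eqdivB}(ii)--(iii) produces boundary data $\phi_0\in C^{2,\a}(\p\O)$ converting the co-normal problem \eqref{eqdivB-MC-J} into the Dirichlet problem \eqref{eqphi}, which Lemma \ref{Lem-Dphi} then solves uniquely by some $\phi\in C^{2,\a}(\bar\O)$. Setting $\Y:=\Y^0+\h^0_1+\nabla\phi\in C^{1,\a}(\bar\O,\Bbb R^3)$, Lemma \ref{Lem-equiv3-MC-J}(ii) gives that $\Y$ solves \eqref{MC-J4}.

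The main obstacle comes at the next step. Adopting the provisional choice $\h_2=\0$ in \eqref{bd-Z}, define $\Z:=\mB(x,\Y(x))-\bold b(x)\in C^{1,\a}(\bar\O,\Bbb R^3)$; Lemma \ref{Lem-equiv2-MC-J}(ii) applied with $\h_2=\0$ says $\Z$ solves \eqref{MC-J2} for $\h_2=\0$ and satisfies \eqref{condZ1}. However, there is no reason the orthogonality \eqref{orth6.9} should hold: a single $\h^0_1\in\Bbb H_1(\O)$ cannot be expected to cancel the $\Bbb H_2(\O)$-part of $\mB(x,\Y)-\bold b$, so $\Z$ typically fails to lie in $\mH^\Gamma(\O,\div0)$. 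I bypass this via Lemma \ref{Lem-equiv1-MC-J}(ii): decompose $\Z=\bold g+\hat\h_2$ with $\bold g\in\mH^\Gamma(\O,\div0)$ and $\hat\h_2\in\Bbb H_2(\O)$, set $\h^0_2:=\hat\h_2$, and choose $\w\in H^1_{n0}(\O,\div0)$ with $\curl\w=\bold g$. Then $\w$ solves \eqref{MC-J1} with $\h_2=\0+\hat\h_2=\h^0_2$, hence $\u:=\w+\mathcal U$ is a weak solution of \eqref{MC-J} with $\h_2=\h^0_2$. Since $\bold g\in C^{1,\a}(\bar\O,\Bbb R^3)$ and $\nu\cdot\w=0$, Schauder div-curl regularity promotes $\w$ to $C^{2,\a}(\bar\O,\Bbb R^3)$, so $\u\in C^{2,\a}(\bar\O,\Bbb R^3)$. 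For the final assertion, if $\O$ has no holes then $m=0$ and $\Bbb H_2(\O)=\{\0\}$, forcing $\hat\h_2=\0=\h^0_2$, and the constructed $\u$ solves \eqref{MC-J} itself.
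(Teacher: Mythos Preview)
Your proof is correct and follows essentially the same route as the paper's: assemble $C^{1,\a}$ regularity for $\bold b$, $\mathcal U$, $\Y^0$, $\mB$; use Proposition \ref{Prop-eqdivB} and Lemma \ref{Lem-Dphi} to produce $\phi\in C^{2,\a}(\bar\O)$ and hence $\Y$ solving \eqref{MC-J4}; then invoke Lemmas \ref{Lem-equiv2-MC-J}(ii) and \ref{Lem-equiv1-MC-J}(ii) to pass back to \eqref{MC-J} at the cost of a $\Bbb H_2(\O)$-shift. The only cosmetic difference is that the paper first defines $\h^0_2$ explicitly by the projection formula \eqref{h20} and then sets $\Z=\mB(x,\Y)-\bold b-\h^0_2\in\mH^\Gamma(\O,\div0)$, whereas you set $\Z=\mB(x,\Y)-\bold b$ first and extract $\h^0_2$ as its $\Bbb H_2(\O)$-component afterward; these are the same object.
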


\begin{proof} Since $\nu\times\B^0\in T\!C^{1,\a}(\p\O,\Bbb R^3)$, from Remark \ref{Rem-MCJ-1} (ii) we know that $\nu\times\B^0\in T_1[C^{1,\a}(\bar\O,\Bbb R^3)\cap \mH^\Gamma(\O,\div0)]$. Hence there exists $\bold b\in C^{1,\a}(\bar\O,\Bbb R^3)\cap \mH^\Gamma(\O,\div0)$ such that $\nu\times\bold b=\nu\times\B^0$ on $\p\O$. Then  $\bold b=\curl\U$ for some $\U\in H^1_{n0}(\O,\div0)\cap C^{2,\a}(\bar\O,\Bbb R^3)$, and we reduce \eqref{MC-J} to \eqref{MC-J1}.

Since $\p\O$ is of $C^{2,\a}$, so $\h_1, \h^0_1\in \Bbb H_1(\O)\subset C^{1,\a}(\bar\O,\Bbb R^3)\cap \mH^\Gamma(\O,\div0)$.
Since $\J\in  C^\a(\bar\O,\Bbb R^3)\cap\mH^\Gamma(\O,\div0)$, then $\J+\h_1\in C^{1,\a}(\bar\O,\Bbb R^3)\cap \mH^\Gamma(\O,\div0)$. So we can find a unique $\Y^0$ satisfying \eqref{Y0}, and by regularity of div-curl system we have $\Y^0\in C^{1,\a}(\bar\O,\Bbb R^3)$.

Since $\B^0_T, \Y^0, \h^0_1$ satisfy \eqref{condB3} and  $\mH(x,\z)$ satisfies $(H_{3T})$, from Proposition \ref{Prop-eqdivB}   we can find a function $\phi_0$ defined on $\p\O$ such that \eqref{dphi0} holds. Since $\mH$ is of $C^{1,\a}$, $\B^0_T$ and $\h^0_1$ are of $C^{1,\a}(\p\O,\Bbb R^3)$, so the right side of \eqref{dphi0} is of $C^{1,\a}$, hence $\phi_0$ can be chosen such that $\phi_0\in C^{2,\a}(\p\O)$.
By Lemma \ref{Lem-Dphi} the Dirichlet problem \eqref{eqphi} has a solution $\phi_{\h^0_1}\in C^{2,\a}(\bar\O)$, and by Proposition \ref{Prop-eqdivB}, $\phi_{\h^0_1}$ is a solution of \eqref{eqdivB-MC-J}.
Set
$$\Y_{\h_1^0}=\Y^0+\h_1^0+\nabla\phi_{\h_1^0}.
$$
By Lemma \ref{Lem-equiv3-MC-J} $\Y_{\h_1^0}$ is a $C^{1,\a}$ solution of \eqref{MC-J4}.

Now we show that, for the above given $\h^0_1$, we can find a unique $\h_2^0\in \Bbb H_2(\O)$ (and $\h_2^0$ depends on $\h^0_1$), such that
$\mB(x,\Y_{\h_1^0}(x))-\bold b(x)-\h_2^0(x)$
satisfies \eqref{orth6.9}.
To prove, let us write  the orthonormal basis of $\Bbb H_2(\O)$ as $\{\y^1,\cdots,\y^m\}$, and write the orthogonality condition \eqref{orth6.9} for $\B(x,\Y_{\h_1^0})$ as follows:
\eq\label{orth22}
\int_\O\langle \mB(x,\Y^0+\h^0_1+\nabla\phi_{\h^0_1}),\y^k\rangle dx=\int_\O(\bold b+\h_2^0)\cdot\y^k dx,\q k=1,\cdots,m.
\eeq
Obviously, for the given $\h^0_1\in\Bbb H_1(\O)$ we can find a unique $\h_2^0\in\Bbb H_2(\O)$ such that the above equalities hold. In fact we can choose
\eq\label{h20}
\h_2^0=\sum_{k=1}^m a^k\y^k,\q a^k=\int_\O\langle \mB(x,\Y^0+\h^0_1+\nabla\phi_{\h^0_1})-\bold b,\y^k\rangle dx.
\eeq
Then \eqref{orth6.9} holds.

For the above choice of $\h_2^0$, let
$$\Z_{\h_1^0,\h_2^0}=\mB(x,\Y_{\h_1^0})-\bold b(x)-\h^0_2.
$$
From Lemmas \ref{Lem-equiv1-MC-J}, \ref{Lem-equiv2-MC-J}, $\Z=\Z_{\h_1^0,\h_2}$ is a $C^{1,\a}$ solution of \eqref{MC-J2} and satisfies  \eqref{condZ}. Hence $\Z_{\h^0_1,\h^0_2}=\curl\w$ for some $\w\in H^1_{n0}(\O,\div0)$. Then $\w$ is a weak solution of \eqref{MC-J1} with $\h_2=\h_2^0$. Since $\Z_{\h^0_1,\h^0_2}\in C^{1,\a}(\bar\O,\Bbb R^3)$, we see that $\w\in C^{2,\a}(\bar\O,\Bbb R^3)$. Let $\u=\U+\w$. Then $\u\in C^{2,\a}(\bar\O,\Bbb R^3)$ is a solution of \eqref{MC-J} with $\h_2=\h^0_2$.
\end{proof}

\section{Other BVPs of the Maxwell System with a Given Current}

\subsection{The normal curl BVP of the Maxwell System}\

In this subsection we consider the following problem
\eq\label{MnC-J}
\left\{\aligned
&\curl [\mH(x,\curl\u+\h_2(x))]=\J(x)+\h_1(x)\q &\text{in }\O,\\
&\div\u=0\q &\text{in }\O,\\
&\nu\cdot\curl\u+\nu\cdot\h_2=B^0_n\q& \text{on }\p\O.
\endaligned\right.
\eeq
We assume \eqref{cond-MJ} and
\eq\label{condMnC-J}
B_n^0\in \dot H^{-1/2}(\p\O).
\eeq
It is natural to consider weak solutions of \eqref{MnC-J} in $\mH(\O,\curl,\div0)$.
A necessary condition for \eqref{MnC-J} to have a weak solution is  $\int_{\p\O} B_n^0 dS=0$ and $\J\in \mH^\Gamma(\O,\div0)$.

\begin{Lem} Assume $\J, \h_1$ satisfy \eqref{cond-MJ} and $B^0_n$ satisfies \eqref{condMnC-J}. Then \eqref{MnC-J} is solvable in $\mH(\O,\curl)$ for some $\h_2\in\Bbb H_2(\O)$ if and only if the following system has a solution  $\Z\in \mH(\O,\div0)$:
\eq\label{MnC-HZ}
\left\{
\aligned
&\curl\mH(x,\Z)=\J+\h_1,\qq\div\Z=0\q&\text{\rm in }\O,\\
&\nu\cdot\Z=B_n^0\q&\text{\rm on }\p\O.
\endaligned\right.
\eeq
\end{Lem}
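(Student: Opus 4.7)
The claim is purely structural, and I would dispatch it using only the $L^2$ decompositions in \eqref{dec-0} together with the image identification \eqref{im-curl}; none of the nonlinear analysis from the earlier sections is needed.

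For the forward direction, suppose $\u \in \mH(\O,\curl)$ with $\div\u=0$ is a weak solution of \eqref{MnC-J} for some $\h_2 \in \Bbb H_2(\O)$. I would simply set
\[
\Z := \curl\u + \h_2 .
\]
Then $\Z \in L^2(\O,\Bbb R^3)$, and $\div\Z = 0$ since $\div\curl\u = 0$ and $\h_2$ is divergence-free. By the first and third lines of \eqref{MnC-J}, $\curl\mH(x,\Z) = \J + \h_1$ in $\O$ and $\nu\cdot\Z = B_n^0$ on $\p\O$, so $\Z \in \mH(\O,\div 0)$ solves \eqref{MnC-HZ}.

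For the backward direction, suppose $\Z \in \mH(\O,\div 0)$ solves \eqref{MnC-HZ}. I would use the second identity in \eqref{dec-0} to decompose uniquely
\[
\Z = \Z_1 + \h_2, \qq \Z_1 \in \mH^\Gamma(\O,\div 0), \q \h_2 \in \Bbb H_2(\O).
\]
By the first line of \eqref{im-curl}, $\mH^\Gamma(\O,\div 0) = \curl H^1_{n0}(\O,\div 0)$, so I can lift $\Z_1 = \curl\u$ for some $\u \in H^1_{n0}(\O,\div 0) \subset \mH(\O,\curl)$. Then $\curl\u + \h_2 = \Z$, so the first equation of \eqref{MnC-J} follows directly from that of \eqref{MnC-HZ}, the condition $\div\u = 0$ is built in, and taking the $H^{-1/2}(\p\O)$ normal trace gives $\nu\cdot\curl\u + \nu\cdot\h_2 = \nu\cdot\Z = B_n^0$. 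This exhibits the required pair $(\u,\h_2)$.

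I do not foresee a real obstacle. The only point to monitor is the legitimacy of the normal trace identities on $\p\O$, but $\Z$, $\curl\u$ and $\h_2$ all lie in the divergence-free subspace of $L^2$, where $\nu\cdot(\cdot)\in H^{-1/2}(\p\O)$ is the standard trace from $\mH(\O,\div)$. The zero-mean requirement $B_n^0 \in \dot H^{-1/2}(\p\O)$ from \eqref{condMnC-J} is automatically compatible with both sides, since in each case $\int_{\p\O} B_n^0\, dS = \int_\O \div\Z\, dx = 0$.
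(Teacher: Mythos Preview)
Your proof is correct and follows essentially the same approach as the paper: in both directions you set $\Z=\curl\u+\h_2$, and for the converse you project $\Z$ onto $\mH^\Gamma(\O,\div0)\oplus\Bbb H_2(\O)$ via \eqref{dec-0} and lift the first component using \eqref{im-curl}. The paper phrases the backward step as ``choose $\h_2$ so that $\Z-\h_2\perp\Bbb H_2(\O)$'', which is exactly your orthogonal decomposition.
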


\begin{proof}
If $\u\in \mH(\O,\curl)$ is a solution of \eqref{MnC-J} and let $\Z=\curl\u+\h_2$, then $\Z\in \mH(\O,\div0)$ is a solution of \eqref{MnC-HZ}.
On the other hand, if \eqref{MnC-HZ} has a solution $\Z\in \mH(\O,\div0)$, then we can always take $\h_2\in \Bbb H_2(\O)$ (which depends on $\Z$) such that $\Z-\h_2\perp_{L^2(\O)} \Bbb H_2(\O)$. Then $\Z-\h_2\in \mH^\Gamma(\O,\div0)$, so there exists $\u\in H^1_{n0}(\O,\div0)$ such that $\curl\u=\Z-\h_2$. Then  $\u$ is a solution of \eqref{MnC-J} corresponding to this $\h_2$.
\end{proof}

\begin{Lem}\label{Lem-equiv-MnC-J} Assume that $\mH$ satisfies $(H_1), (H_3)$, $\J, \h_1$ satisfy \eqref{cond-MJ} and $B^0_n$ satisfies \eqref{condMnC-J}. Let $\Y^0$ be determined by $\J$ and $\h_1$ by \eqref{Y0}.
\begin{itemize}
\item[(i)] If there exists $\h_2\in \Bbb H_2(\O)$ such that \eqref{MnC-J} has a solution $\u\in \mH(\O,\curl)$, then there exists $\h^0_1\in \Bbb H_1(\O)$ (depending on $\h_2$) such that the following problem
    \eq\label{eqdivB-MnC-J}
\left\{\aligned
&\div[\mB(x,\Y^0+\h^0_1+\nabla\phi)]=0\q&\text{\rm in }\O,\\
&\nu\cdot\mB(x,\Y^0+\h^0_1+\nabla\phi)=B^0_n\q&\text{\rm on }\p\O,
\endaligned\right.
\eeq
has a solution $\phi\in H^1(\O)$ which satisfies the orthogonality condition
\eq\label{orth-BYH2}
\mB(x,\Y^0+\h^0_1+\nabla\phi)-\h_2\perp_{L^2(\O)}\Bbb H_2(\O).
\eeq

\item[(ii)] If there exists $\h^0_1\in \Bbb H_1(\O)$  such that \eqref{eqdivB-MnC-J} has a solution $\phi$, then there exists $\h_2\in\Bbb H_2(\O)$ (depending on $\h_1^0$) such that  $(\phi,\h_2)$ satisfies \eqref{orth-BYH2}, and hence \eqref{MnC-J} has a solution $\u\in H^1_{n0}(\O,\div0)$.
    \end{itemize}
\end{Lem}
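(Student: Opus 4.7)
The plan is to prove both directions constructively by passing between the magnetic induction $\Z = \curl\u + \h_2$ and the magnetic field $\Y = \mH(x,\Z)$, inverting this relation via $(H_3)$, and using the Hodge-type decompositions \eqref{dec-0} and \eqref{im-curl} to identify the scalar potential $\phi$ and the harmonic pieces $\h_1^0,\h_2$. The structure parallels that of Lemma \ref{Lem-equiv3-MC-J}, but here the boundary role is played by $\nu\cdot\B$ in place of $\nu\times\B$.

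For part (i), given $\u$ and $\h_2$ solving \eqref{MnC-J}, set $\Z=\curl\u+\h_2$ and $\Y=\mH(x,\Z)$. From \eqref{MnC-J} one reads off $\div\Z=0$, $\nu\cdot\Z=B_n^0$, and $\curl\Y=\J+\h_1=\curl\Y^0$. Condition $(H_1)$ ensures $\Y\in L^2(\O,\Bbb R^3)\cap\mH(\O,\curl)$, so the first line of \eqref{dec-0} gives $\Y-\Y^0=\h_1^0+\nabla\phi$ with $\h_1^0\in\Bbb H_1(\O)$ and $\phi\in H^1(\O)$. Inverting via $(H_3)$ produces $\Z=\mB(x,\Y^0+\h_1^0+\nabla\phi)$, so the two conditions on $\Z$ translate into exactly \eqref{eqdivB-MnC-J}. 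The orthogonality \eqref{orth-BYH2} reduces to $\curl\u\perp_{L^2(\O)}\Bbb H_2(\O)$, which I would verify directly by integration by parts: for any $\h'\in\Bbb H_2(\O)\subset H^1(\O,\Bbb R^3)$, Green's identity produces $\int_\O\u\cdot\curl\h'\,dx$ plus a boundary term involving $\nu\times\h'$, and both vanish because $\curl\h'=\0$ and $\h'_T=\0$ forces $\nu\times\h'=\0$.

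For part (ii), I reverse the construction. Setting $\Y=\Y^0+\h_1^0+\nabla\phi$ and $\Z=\mB(x,\Y)$, the two equations of \eqref{eqdivB-MnC-J} yield $\Z\in\mH(\O,\div0)$ with $\nu\cdot\Z=B_n^0$; the second line of \eqref{dec-0} then splits $\Z=\Z_1+\h_2$ uniquely with $\Z_1\in\mH^\Gamma(\O,\div0)$ and $\h_2\in\Bbb H_2(\O)$, making \eqref{orth-BYH2} immediate. Next \eqref{im-curl} provides $\u\in H^1_{n0}(\O,\div0)$ with $\curl\u=\Z_1=\Z-\h_2$, and a direct check confirms that $\u$ solves \eqref{MnC-J}: the main equation holds since $\curl[\mH(x,\curl\u+\h_2)]=\curl\Y=\curl\Y^0=\J+\h_1$, the divergence-free condition by construction of $\u$, and the boundary condition since $\nu\cdot(\curl\u+\h_2)=\nu\cdot\Z=B_n^0$.

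The main subtlety I foresee is the orthogonality check in part (i): because $\u$ is only assumed in $\mH(\O,\curl)$ rather than $H^1(\O,\Bbb R^3)$, one cannot directly cite \eqref{im-curl} to conclude $\curl\u\in\mH^\Gamma(\O,\div0)$, and the integration-by-parts argument above is what substitutes for that regularity step. Apart from this, the proof is a bookkeeping exercise in applying the decompositions already catalogued in Section 2.
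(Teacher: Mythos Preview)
Your proposal is correct and follows essentially the same route as the paper's own proof, which is extremely terse: it cites the preceding lemma to pass from \eqref{MnC-J} to the $\Z$-system \eqref{MnC-HZ}, then observes that $\Z\mapsto\Y=\mH(x,\Z)$ converts this into the $\Y$-system \eqref{MnC-J-Y}, and finally defers to the argument of Lemma~\ref{Lem-equiv3-MC-J} for the reduction of the $\Y$-system to the scalar problem \eqref{eqdivB-MnC-J}. Your write-up simply unpacks these steps in one pass, and your explicit integration-by-parts verification of the orthogonality $\curl\u\perp_{L^2(\O)}\Bbb H_2(\O)$ in part~(i) --- exploiting that $\h'\in\Bbb H_2(\O)\subset H^1(\O,\Bbb R^3)$ by \eqref{H1H2}, with $\curl\h'=\0$ and $\h'_T=\0$ --- fills in a detail the paper leaves implicit.
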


\begin{proof}
 $\Z$ is a solution of \eqref{MnC-HZ} if and only if  $\Y(x)=\mH(x,\Z(x))$ is a solution of
\eq\label{MnC-J-Y}
\left\{\aligned
&\curl\Y=\J+\h_1,\qq \div[\mB(x,\Y(x))]=0\q&\text{in }\O,\\
&\nu\cdot\mB(x,\Y(x))=0\q&\text{on }\p\O.
\endaligned\right.
\eeq
Similar to the proof of Lemma \ref{Lem-equiv3-MC-J} we can show that \eqref{MnC-J-Y} is equivalent to \eqref{eqdivB-MnC-J}.
\end{proof}

\begin{Thm}\label{Thm1-ex-MnC-J} Assume $\O, \mH(x,\z), \J, \h_1$ satisfy \eqref{cond-MJ} and $B^0_n$ satisfies \eqref{condMnC-J}. For any $\h_1^0\in\Bbb H_1(\O)$, there exists
$\h_2\in\Bbb H_2(\O)$ (depending $\h_1, \h_1^0$) such that \eqref{MnC-J} has a solution.
\end{Thm}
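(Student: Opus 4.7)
The plan is to invoke Lemma \ref{Lem-equiv-MnC-J}(ii): it suffices to show that for every $\h_1^0\in\Bbb H_1(\O)$ the Neumann problem \eqref{eqdivB-MnC-J} has a weak solution $\phi\in\dot H^1(\O)$. Once this is done, Lemma \ref{Lem-equiv-MnC-J}(ii) delivers the $\h_2\in\Bbb H_2(\O)$ (determined by the $L^2$-projection of $\mB(x,\Y^0+\h_1^0+\nabla\phi)$ onto $\Bbb H_2(\O)$) so that \eqref{orth-BYH2} holds, and the corresponding $\u\in H^1_{n0}(\O,\div0)$ solves \eqref{MnC-J}.

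First I would fix $\h_1^0\in\Bbb H_1(\O)$ and let $\Y^0\in H^1_{n0}(\O,\div0)$ be the unique vector field associated to $\J,\h_1$ through \eqref{Y0} (whose existence uses $\J+\h_1\in\mH^\Gamma(\O,\div0)$). By Remark \ref{Rem4.2}, the hypotheses $(H_1),(H_2),(H_3)$ packaged in \eqref{cond-MJ} imply that $\mB$ satisfies $(B_1),(B_2)$; in particular, $\mB(x,\cdot)$ is Lipschitz and strongly monotone on $\Bbb R^3$.

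Second, mimicking the monotone-operator argument in Proposition \ref{Prop-sol}(i), I would introduce $\mT:\dot H^1(\O)\to \dot H^1(\O)^*$ by
$$
\langle \mT(\phi),\eta\rangle_{\dot H^1(\O)^*,\dot H^1(\O)}=\int_\O\langle \mB(x,\Y^0+\h_1^0+\nabla\phi),\nabla\eta\rangle\,dx,\qquad \forall\,\phi,\eta\in\dot H^1(\O).
$$
The linear growth in $(B_1)$ makes $\mT$ well defined and bounded, the Lipschitz bound in $(B_2)$ makes it hemicontinuous, and the strong monotonicity \eqref{mono-B} together with the Poincar\'e inequality on $\dot H^1(\O)$ yields
$$
\langle \mT(\phi_2)-\mT(\phi_1),\phi_2-\phi_1\rangle_{\dot H^1(\O)^*,\dot H^1(\O)}\geq \lam_0\|\nabla(\phi_2-\phi_1)\|_{L^2(\O)}^2\geq c(\O)\|\phi_2-\phi_1\|_{H^1(\O)}^2.
$$
Hence by the Browder--Minty theorem, $\mT$ is a bijection of $\dot H^1(\O)$ onto $\dot H^1(\O)^*$.

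Third, the condition \eqref{condMnC-J}, namely $B^0_n\in\dot H^{-1/2}(\p\O)$, means exactly that $\langle B^0_n,1\rangle_{\p\O,1/2}=0$, so the map $\eta\mapsto \langle B^0_n,\eta\rangle_{\p\O,1/2}$ descends to a bounded linear functional on $\dot H^1(\O)$. Surjectivity of $\mT$ produces a unique $\phi\in\dot H^1(\O)$ with $\langle\mT(\phi),\eta\rangle=\langle B^0_n,\eta\rangle_{\p\O,1/2}$ for all $\eta\in\dot H^1(\O)$; adding a constant test function (both sides vanish on it) extends the identity to all $\eta\in H^1(\O)$. This $\phi$ is the desired weak solution of \eqref{eqdivB-MnC-J}, and Lemma \ref{Lem-equiv-MnC-J}(ii) concludes the proof.

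I do not anticipate a serious obstacle: the reduction to the scalar quasilinear Neumann problem has already been carried out in Lemma \ref{Lem-equiv-MnC-J}, and the Browder--Minty argument is essentially the one used in Proposition \ref{Prop-sol}(i) for the Dirichlet-type tangential BVP. The only point requiring a moment of care is the compatibility of the Neumann datum, which is precisely the $\dot H^{-1/2}(\p\O)$ hypothesis on $B^0_n$; no additional condition on $\h_1^0$ is needed because the orthogonality \eqref{orth-BYH2} is absorbed into the choice of $\h_2$.
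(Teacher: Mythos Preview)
Your proposal is correct and follows essentially the same route as the paper's own proof: reduce via Lemma \ref{Lem-equiv-MnC-J}(ii) to the scalar Neumann problem \eqref{eqdivB-MnC-J}, solve it by the Browder--Minty theorem applied to the monotone operator $\mT$ on $\dot H^1(\O)$ (exactly as in Proposition \ref{Prop-sol}(i)), and then choose $\h_2$ to satisfy \eqref{orth-BYH2}. Your additional remarks on the compatibility of $B^0_n\in\dot H^{-1/2}(\p\O)$ and the extension to constant test functions are correct refinements of details the paper leaves implicit.
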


\begin{proof} Let $\Y^0$ be determined by $\J$ and $\h_1$ by \eqref{Y0}, and let $\h_1^0\in\Bbb H_1(\O)$.
As in the proof of Proposition \ref{Prop-sol} (i), we define a map $\mT$ by
$$
\langle \mT(\phi),\eta\rangle_{\dot H^1(\O)^*, \dot H^1(\O)}=\int_\O \langle\mB(x,\Y^0(x)+\h_1^0(x)+\nabla\phi), \nabla\eta\rangle dx,\q\forall\phi, \eta\in \dot H^1(\O).
$$
Conditions $(B_1)$, $(B_2)$ imply that $\mT$ is hemi-continuous and strongly monotone from $\dot H^1(\O)$ to $\dot H^1(\O)^*$, hence $\mT$ is surjective. So there exists a unique $\phi\in \dot H^1(\O)$ such that
$$
\langle \mT(\phi),\eta\rangle_{\dot H^1(\O)^*, \dot H^1(\O)}=\langle  B^0_n,\eta\rangle_{\p\O,1/2},\q\forall \eta\in \dot H^1(\O).
$$
$\phi$ is the unique weak  solution of \eqref{eqdivB-MnC-J} in $\dot H^1(\O)$. Note that $\phi$ depends on $\h_1,\h_1^0$. By choosing $\h_2\in\Bbb H_2(\O)$ depending on $\h_1, \h_1^0$ such that \eqref{orth-BYH2} holds, and using Lemma \ref{Lem-equiv-MnC-J} (ii), we conclude  that Eq. \eqref{MnC-J} with this choice of $\h_2$  has a weak solution $\u$.
\end{proof}

\subsection{The natural BVP of the Maxwell System}\

In this section we consider the natural BVP of the Maxwell system with a given current\footnote{Let us mention that in  \cite{MP1}, the electro-static problem is reduced to a problem of the form \eqref{MNa-J}.}:
\eq\label{MNa-J}
\left\{\aligned
&\curl [\mH(x,\curl\u+\h_2(x))]=\J(x)+\h_1(x)\q &\text{in }\O,\\
&\div\u=0\q &\text{in }\O,\\
&\nu\times\mH(x,\curl\u+\h_2(x))=\nu\times\H^0\q& \text{on }\p\O.
\endaligned\right.
\eeq
We assume \eqref{cond-MJ} and
\eq\label{condMNa-J1}
\nu\times\H^0\in T\!H^{1/2}(\p\O,\Bbb R^3),\q \nu\cdot\curl\H^0_T\in H^{-1/2}(\p\O).
\eeq

Let us denote by  $\pi_\tau$ the tangential trace map $\w\mapsto\w_T$ from $\mH(\O,\curl)$ to $H^{-1/2}(\p\O,\Bbb R^3)$, where $\w_T=(\nu\times\w)\times\nu$. In particular $\H^0_T=(\nu\times\H^0)\times\nu$.

\begin{Def} Assume $\O, \J, \h_1, \h_2$ satisfy \eqref{cond-MJ} and $\nu\times\H^0$ satisfies \eqref{condMNa-J1}.
$\u$ is called a weak solution of \eqref{MNa-J} if $\u\in \mH(\O,\curl,\div0)$,
 $\mH(x,\curl\u(x)+\h_2(x))\in L^{2,-1/2}_t(\O,\Bbb R^3)$, and for any $\w\in \mH(\O,\curl)$ it holds that
\eq\label{wk-MNa-J}
\int_\O (\J+\h_1)\cdot\w dx=\int_\O\mH(x,\curl\u+\h_2)\cdot\curl\w dx+\langle  \pi_\tau\w, \nu\times\H^0\rangle_{\p\O,1/2}.
\eeq
$\u$ is called an $H^1$-weak solution of \eqref{MNa-J} if $\u$ is a weak solution and $\u\in H^1(\O,\Bbb R^3)$.
\end{Def}

If $\u$ is a solution of \eqref{MNa-J}, then for any $\bold v\in \mH(\O,\curl0,\div0)$, $\u+\bold v$ is also a weak solution of \eqref{MNa-J}.
Therefore solvability of \eqref{MnC-J} in $\mH(\O,\curl,\div0)$ is equivalent to solvability of $H^1$-weak solutions in $H^1_{n0}(\O,\div0)\cap \Bbb H_1(\O)^\perp_{L^2(\O)}$.

\begin{Lem}\label{Lem-nec3} Assume $\O, \J, \h_1, \h_2$ satisfy \eqref{cond-MJ} and $\nu\times\H^0$ satisfies \eqref{condMNa-J1}.
If \eqref{MNa-J} has a weak solution $\u\in \mH(\O,\curl,\div0)$, then
\eq\label{condMNa-J}
\aligned
&\nu\cdot\curl\H^0_T=\nu\cdot\J\q\text{\rm on }\p\O,\\
&\int_\O(\J+\h_1)\cdot\h dx=\langle \pi_\tau\h, \nu\times\H^0\rangle_{\p\O,1/2},\q\forall \h\in \Bbb H_1(\O).
\endaligned
\eeq
\end{Lem}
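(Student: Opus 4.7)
My plan is to derive each of the two conditions in \eqref{condMNa-J} by substituting an appropriate test field $\w$ into the weak formulation \eqref{wk-MNa-J}. The second condition should come from testing against a Neumann field, while the first should come from testing against a gradient followed by two integrations by parts (one in the bulk, one on the boundary).

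For the second identity in \eqref{condMNa-J}, I would take $\w=\h\in \Bbb H_1(\O)$. Since $\Bbb H_1(\O)\subset \mH(\O,\curl,\div0)\subset \mH(\O,\curl)$ and $\curl\h=\0$, the integral $\int_\O\mH(x,\curl\u+\h_2)\cdot\curl\w\,dx$ in \eqref{wk-MNa-J} vanishes identically, leaving exactly the claimed identity of pairings.

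For the first identity $\nu\cdot\curl\H^0_T=\nu\cdot\J$ on $\p\O$, I would test against $\w=\nabla\eta$ for arbitrary $\eta\in H^2(\O)$. Then $\w\in \mH(\O,\curl)$ with $\curl\w=\0$, so the bulk curl integral again drops out. On the left, integration by parts, together with $\div(\J+\h_1)=0$ in $\O$ and $\nu\cdot\h_1=0$ on $\p\O$ (the latter because $\h_1\in\Bbb H_1(\O)$), reduces the left-hand side to $\langle\nu\cdot\J,\eta\rangle_{\p\O,1/2}$. On the right, $\pi_\tau(\nabla\eta)=\nabla_T\eta$, and a surface integration by parts on $\p\O$, using the intrinsic identity that relates the normal component of $\curl\H^0$ at the boundary to the tangential surface divergence of $\nu\times\H^0$, converts the boundary pairing into $\langle\nu\cdot\curl\H^0_T,\eta\rangle_{\p\O,1/2}$. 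Since this equality holds for every $\eta\in H^2(\O)$ and the corresponding boundary traces are dense in $H^{1/2}(\p\O)$, the first line of \eqref{condMNa-J} follows as an equality in $H^{-1/2}(\p\O)$.

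The only delicate step is the surface integration by parts, which must be interpreted as a duality identity between $H^{-1/2}(\p\O)$ and $H^{1/2}(\p\O)$ rather than as a pointwise equality, since $\u$ is not assumed to be in $H^1$ and $\mH(x,\curl\u+\h_2)$ lies only in $L^{2,-1/2}_t(\O,\Bbb R^3)$. The two regularity assumptions in \eqref{condMNa-J1}, namely $\nu\times\H^0\in T\!H^{1/2}(\p\O,\Bbb R^3)$ and $\nu\cdot\curl\H^0_T\in H^{-1/2}(\p\O)$, are precisely tailored so that both sides of the resulting identity are well-defined dual pairings; beyond this, the argument is formal and uses only the definitions already in place.
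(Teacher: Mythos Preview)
Your proposal is correct and follows essentially the same approach as the paper. For the second identity you both test against $\h\in\Bbb H_1(\O)$; for the first, the paper takes the normal trace of the strong equation $\curl[\mH(x,\curl\u+\h_2)]=\J+\h_1$ and invokes the surface identity $\nu\cdot\curl\w=\nu\cdot\curl\w_T$ together with the boundary condition $\mH_T=\H^0_T$, which is exactly the computation you carry out in weak form by testing against $\nabla\eta$ and integrating by parts in the bulk and on $\partial\O$.
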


\begin{proof} Assume \eqref{MNa-J} has a weak solution $\u\in H^1(\O,\Bbb R^3)$. Then
$$\aligned
&\nu\cdot\J=\nu\cdot\curl [\mH(x,\curl\u+\h_2)]=\nu\cdot\curl [\mH_T(x,\curl\u+\h_2)]
=\nu\cdot\curl\H^0_T.
\endaligned
$$
Taking $\w=\h\in \Bbb H_1(\O)$ in \eqref{wk-MNa-J} we get the last equality in \eqref{condMNa-J}.
\end{proof}

\begin{Rem} If $\J\in L^2(\O,\Bbb R^3)$, $\nu\times \H^0\in T\!H^{1/2}(\p\O,\Bbb R^3)$ and they satisfy the first equality in \eqref{condMNa-J}, then
\eq\label{eq-JH}
\int_\O\J\cdot\nabla\phi dx=\langle \pi_\tau(\nabla\phi), \nu\times\H^0\rangle_{\p\O,1/2},\q\forall \phi\in H^1(\O).
\eeq
\end{Rem}

\begin{proof} Let $\H^0$ be a divergence-free and tangential component preserving extension of $\H^0_T$ on to $\O$. From the first equality in \eqref{condMNa-J}, for any $\phi\in H^1(\O)$ we have
$$\aligned
&\int_\O\J\cdot\nabla\phi dx=\int_{\p\O}\phi\,\nu\cdot\J dS=\int_{\p\O}\phi\,\nu\cdot\curl\H^0_T dS
=\int_\O\nabla\phi\cdot\curl\H^0 dx\\
=&\int_\O\div(\H^0\times\nabla\phi)dx
=\langle \nu\cdot(\H^0\times\nabla\phi), 1\rangle_{\p\O,1/2}=\langle (\nabla\phi)_T, \nu\times\H^0\rangle_{\p\O,1/2}.
\endaligned
$$
\end{proof}

We have the following two existence results.

\begin{Prop} Assume $\O, \J, \h_1, \h_2, \nu\times\H^0$ satisfy \eqref{cond-MJ}, \eqref{condMNa-J1} and \eqref{condMNa-J}, and
$\mH(x,\z)=\nabla_\z P(x,\z)$
for a function $P$ satisfying $(P_1)$. Then  \eqref{MNa-J} has an $H^1$-weak solution $\u\in H^1_{n0}(\O,\div0)\cap\Bbb H_1(\O)^\perp_{L^2(\O)}$.
\end{Prop}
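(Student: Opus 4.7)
The plan is to apply the direct method of the calculus of variations on the space $X=H^1_{n0}(\O,\div0)\cap \Bbb H_1(\O)^\perp_{L^2(\O)}$, on which, by the div-curl-gradient inequalities, $\|\curl\cdot\|_{L^2(\O)}$ is equivalent to the full $H^1(\O)$-norm. I would introduce
\begin{equation*}
E[\u]=\int_\O P(x,\curl\u+\h_2)\,dx-\int_\O(\J+\h_1)\cdot\u\,dx+\langle\pi_\tau\u,\nu\times\H^0\rangle_{\p\O,1/2},
\end{equation*}
whose Euler--Lagrange equation, tested against $\w\in X$, is exactly the weak identity \eqref{wk-MNa-J} after moving the trace term to the right side. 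By $(P_1)$ the functional is continuous on $X$, and the convexity of $P(x,\cdot)$ in $\z$ makes the leading term weakly lower semicontinuous. For coercivity I would use that $\curl\u\in\mH^\Gamma(\O,\div0)$ is $L^2$-orthogonal to $\Bbb H_2(\O)$ (by \eqref{dec-0}), so $\|\curl\u+\h_2\|_{L^2}^2=\|\curl\u\|_{L^2}^2+\|\h_2\|_{L^2}^2\geq\|\curl\u\|_{L^2}^2$. Combined with $P(x,\z)\geq c_1|\z|^2-p_1(x)$, the trace estimate $\|\pi_\tau\u\|_{H^{-1/2}(\p\O)}\leq C\|\u\|_{H^1(\O)}$, and the norm equivalence on $X$, this yields $E[\u]\geq C_1\|\u\|_{H^1(\O)}^2-C_2\|\u\|_{H^1(\O)}-C_3$, hence $E$ is coercive.

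The direct method then produces a minimizer $\u\in X$ satisfying
\begin{equation*}
\int_\O \mH(x,\curl\u+\h_2)\cdot\curl\w\,dx=\int_\O(\J+\h_1)\cdot\w\,dx-\langle\pi_\tau\w,\nu\times\H^0\rangle_{\p\O,1/2},\q\forall\,\w\in X.
\end{equation*}
The remaining task is to upgrade this identity to every test field $\w\in\mH(\O,\curl)$. Using the $L^2$-decomposition \eqref{dec-1}, I write $\w=\w_0+\h+\nabla\phi$ with $\w_0\in\mH^\Sigma_0(\O,\div0)$, $\h\in\Bbb H_1(\O)$ and $\phi\in H^1(\O)$. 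Since $\curl\w_0=\curl\w\in L^2(\O)$ and $\p\O$ is $C^2$, div-curl regularity places $\w_0$ in $H^1_{n0}(\O,\div0)$; being $L^2$-orthogonal to $\Bbb H_1(\O)$, it lies in $X$. The identity then holds on $\w_0$ by Euler--Lagrange; for $\w=\h$ the curl term vanishes and what remains is precisely the second line of \eqref{condMNa-J}; for $\w=\nabla\phi$ the curl term again vanishes, $\int_\O\h_1\cdot\nabla\phi\,dx=0$ because $\h_1\in\Bbb H_1(\O)$ is divergence free with vanishing normal trace, and the $\J$-part matches by the identity \eqref{eq-JH} obtained from the first line of \eqref{condMNa-J}.

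The only step that is not textbook convex minimization is this last extension argument. The subtle point is that $\w_0$ produced by the decomposition is a priori only in $L^2$, and it is the $H^1$-regularity of divergence-free, curl-controlled fields with vanishing normal trace on a $C^2$ domain that places $\w_0$ into the admissible space $X$ where the Euler--Lagrange identity was derived; the two compatibility relations in \eqref{condMNa-J} are exactly what is needed to absorb the $\Bbb H_1$- and $\nabla H^1$-components of $\w$.
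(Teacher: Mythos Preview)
Your proposal is correct and follows essentially the same variational approach as the paper: minimize the energy $E$ over $Y=H^1_{n0}(\O,\div0)\cap\Bbb H_1(\O)^\perp_{L^2(\O)}$ using $(P_1)$ for coercivity and convexity for weak lower semicontinuity, then extend the Euler--Lagrange identity to all of $\mH(\O,\curl)$ via the compatibility conditions \eqref{condMNa-J} and \eqref{eq-JH}. The only cosmetic difference is that the paper phrases the extension step as the invariance $\mE[\w+\h+\nabla\phi]=\mE[\w]$ (so the minimizer on $Y$ is automatically a minimizer on $\mH(\O,\curl)=H^1_{n0}(\O,\div0)\oplus\grad H^1(\O)$), whereas you verify the weak identity on each summand $\w_0,\h,\nabla\phi$ separately after invoking div-curl regularity to place $\w_0$ back into $X$; both arguments are equivalent.
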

\begin{proof} Set $P_2(x,\z)=P(x,\z+\h_2(x))$.
Then
$\nabla_{\z}P_2(x,\z)=\mH(x,\z+\h_2(x))$, and
$P_2(x,\z)$ also satisfies the condition $(P_1)$ (with different constants and functions $g_1, g_2$).
Define
\eq\label{sp-Y}
Y=H^1_{n0}(\O,\div0)\cap \Bbb H_1(\O)^\perp_{L^2(\O)}=\mH^\Sigma_0(\O,\div0)\cap H^1(\O,\Bbb R^3),
\eeq
and
$$\mE[\w]=\int_\O\{P_2(x,\curl\w)-(\J+\h_1)\cdot\w\}dx+\langle\nu\times\H^0, \w_T\rangle_{\p\O,1/2}.
$$
By the div-curl-gradient inequalities we have
$$\|\w\|_{H^1(\O)}\leq C(\O)\|\curl\w\|_{L^2(\O)},\q\forall \w\in Y.
$$
As in the proof of Proposition 3.1 in \cite{P4} we can show that $\mE$ is coercive in $Y$, and $\mE$ has a minimizer $\w_0\in Y$.
From \eqref{condMNa-J} and \eqref{eq-JH} we have
$$
\int_\O(\J+\h_1)\cdot(\h+\nabla\phi) dx=\langle \pi_\tau(\h+\nabla\phi),\nu\times\H^0\rangle_{\p\O,1/2},\q\forall \h\in\Bbb H_1(\O),\q \phi\in H^1(\O).
$$
Hence
$\mE[\w+\h+\nabla\phi]=\mE[\w]$ for all $\w\in H^1_{n0}(\O,\div0)$, $\h\in \Bbb H_1(\O)$ and $\phi\in H^1(\O)$.
Recall that $\mH(\O,\curl)=H^1_{n0}(\O,\div0)\oplus \grad H^1(\O).$ Therefore the minimizer $\w_0$ of $\mE$ on $Y$ is also a minimizer of $\mE$ on $\mH(\O,\curl)$.
\end{proof}

\begin{Thm}\label{Thm-MNa-Mono} Assume $\O, \J, \h_1, \h_2, \nu\times\H^0$ satisfy \eqref{cond-MJ}, \eqref{condMNa-J1} and \eqref{condMNa-J}, and $\mH$ satisfies $(H_1), (H_2)$. Then  \eqref{MNa-J} has an $H^1$-weak solution $\u$,  and $\u$ is unique in $H^1_{n0}(\O,\div0)\cap \Bbb H_1(\O)^\perp_{L^2(\O)}$. The solution map $(\h_1,\J)\mapsto \u$ is continuous from $\Bbb H_1(\O)\times \mH^\Gamma(\O,\div0)$ to $H^1_{n0}(\O,\div0)$.
\end{Thm}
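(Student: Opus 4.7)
The plan is to mimic the monotone-operator proof of Theorem \ref{Thm-Mono-MD-J}, with the spaces and boundary bookkeeping adjusted to the natural boundary condition; the datum $\nu\times\H^0$ will be absorbed into the right-hand side as a linear functional. Set
$$Y=H^1_{n0}(\O,\div0)\cap \Bbb H_1(\O)^\perp_{L^2(\O)},$$
which, thanks to $\nu\cdot\w=0$ on $\p\O$ and $\w\perp\Bbb H_1(\O)$, is a separable Hilbert space on which the div-curl-gradient inequality yields $\|\w\|_{H^1(\O)}\leq C(\O)\|\curl\w\|_{L^2(\O)}$, so that $\|\curl\cdot\|_{L^2(\O)}$ is an equivalent norm. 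Denote its dual by $Y^*$.

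For fixed $\h_2\in\Bbb H_2(\O)$ I define $\mA_{\h_2}\colon Y\to Y^*$ by
$$\langle \mA_{\h_2}(\u),\w\rangle_{Y^*,Y}=\int_\O\langle \mH(x,\curl\u+\h_2),\curl\w\rangle dx.$$
Exactly as in Step 1 of the proof of Theorem \ref{Thm-Mono-MD-J}, condition $(H_1)$ makes $\mA_{\h_2}$ bounded and continuous, and $(H_2)$ (via the consequence $(H_4)$) gives the strong monotonicity
$$\langle \mA_{\h_2}(\u_2)-\mA_{\h_2}(\u_1),\u_2-\u_1\rangle_{Y^*,Y}\geq \mu C_1(\O)\|\u_2-\u_1\|_Y^2,$$
so the Browder-Minty theorem produces a Lipschitz bijection $\mA_{\h_2}\colon Y\to Y^*$. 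I then define $L\in Y^*$ by
$$\langle L,\w\rangle=\int_\O(\J+\h_1)\cdot\w dx-\langle \pi_\tau\w,\nu\times\H^0\rangle_{\p\O,1/2},$$
whose boundedness follows from $\J+\h_1\in L^2(\O,\Bbb R^3)$, from $\nu\times\H^0\in H^{-1/2}(\p\O,\Bbb R^3)$, and from the continuity of the trace $\pi_\tau\colon H^1(\O,\Bbb R^3)\to H^{1/2}(\p\O,\Bbb R^3)$. The unique $\u=\mA_{\h_2}^{-1}(L)\in Y$ then satisfies \eqref{wk-MNa-J} tested against every $\w\in Y$.

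The decisive step is to upgrade the test class from $Y$ to the full $\mH(\O,\curl)$ appearing in \eqref{wk-MNa-J}; this is the real content of the compatibility conditions \eqref{condMNa-J}. Using the decompositions $\mH(\O,\curl)=H^1_{n0}(\O,\div0)\oplus \grad H^1(\O)$ and $H^1_{n0}(\O,\div0)=Y\oplus_{L^2(\O)}\Bbb H_1(\O)$, every $\w\in\mH(\O,\curl)$ splits uniquely as $\w=\w_0+\h+\nabla\phi$ with $\w_0\in Y$, $\h\in\Bbb H_1(\O)$, $\phi\in H^1(\O)$, and $\curl\w=\curl\w_0$, so it suffices to verify \eqref{wk-MNa-J} for $\w=\h$ and for $\w=\nabla\phi$. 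For $\w=\h$ the curl integrals vanish and what remains is precisely the second half of \eqref{condMNa-J}; for $\w=\nabla\phi$ one has $\int_\O\h_1\cdot\nabla\phi dx=0$ (since $\h_1\in\Bbb H_1(\O)$), and the residual identity $\int_\O\J\cdot\nabla\phi dx=\langle \pi_\tau\nabla\phi,\nu\times\H^0\rangle_{\p\O,1/2}$ is \eqref{eq-JH}, whose validity is in turn guaranteed by the first half of \eqref{condMNa-J}. This establishes $\u$ as the unique $H^1$-weak solution in $Y$, and the Lipschitz continuity of $\mA_{\h_2}^{-1}$ immediately provides continuous dependence of $\u$ on $(\h_1,\J)\in \Bbb H_1(\O)\times \mH^\Gamma(\O,\div0)$. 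The monotone-operator step itself is routine once patterned on Theorem \ref{Thm-Mono-MD-J}; the only delicate point is this last extension, where the two parts of \eqref{condMNa-J} must be used at exactly the right moment, and it is the precise reason those compatibility conditions are imposed in the hypothesis.
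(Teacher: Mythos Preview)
Your argument is correct and follows essentially the same route as the paper's own proof: work on the space $Y=H^1_{n0}(\O,\div0)\cap\Bbb H_1(\O)^\perp_{L^2(\O)}$, apply Browder--Minty to the strongly monotone operator $\mA_{\h_2}$ against the functional $L$ that already carries the boundary datum $\nu\times\H^0$, and then enlarge the test class to all of $\mH(\O,\curl)$ by checking separately on $\Bbb H_1(\O)$ and on $\grad H^1(\O)$ using the two halves of \eqref{condMNa-J} (the second directly, the first via \eqref{eq-JH}). Your write-up is in fact slightly more explicit than the paper's in isolating which compatibility condition handles which summand.
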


\begin{proof} The proof is similar to the proof of Theorem \ref{Thm-Mono-MD-J}.
Define $Y$ as in \eqref{sp-Y}.
For any given $\h_2\in \Bbb H_2(\O)$, we can define a map $\mA_{\h_2}: Y\to \Y^*$ such that
$$
A_{\h_2,\w}[\bv]=\int_\O \langle\mH(x,\curl\w+\h_2), \curl\bv\rangle dx,\q \forall \bv\in Y.
$$
We can show that $\mA_{\h_2}: Y\to Y^*$ is hemi-continuous and strongly monotone, hence surjective. We can also define an operator $\mL: L^2(\O,\Bbb R^3)\to Y^*$, such that
$$
\langle \mL(\u),\bv\rangle_{X^*,X}=\int_\O \u\cdot\bv dx-\langle\pi_\tau\bv,\nu\times\H^0\rangle_{\p\O,1/2},\q \forall \bv\in Y.
$$
Since $\mA_{\h_2}$ is surjective, there exists $\u\in Y$ such that
$\mA_{\h_2}(\u)=\mL(\J+\h_1),$
that is,
\eq\label{mAJ-2}
\int_\O \langle\mH(x,\curl\u+\h_2), \curl\bv\rangle dx=\int_\O(\J+\h_1)\cdot\bv dx-\langle\pi_\tau\bv,\nu\times\H^0\rangle_{\p\O,1/2},\q \forall \bv\in Y.
\eeq

For all $\h\in \Bbb H_1(\O)$ and $\phi\in H^1(\O)$ we have
$$\langle \mA_{\h_2}(\u),\h+\nabla\phi\rangle_{Y^*,Y}=0.
$$
On the other hand, from \eqref{condMNa-J} we have
$$\langle \mL(\J+\h_1),\h+\nabla\phi\rangle_{X^*,X}=\int_\O(\J+\h_1)\cdot(\h+\nabla\phi) dx-\langle\pi_\tau(\h+\nabla\phi),\nu\times\H^0\rangle_{\p\O,1/2}=0.
$$
From these and \eqref{mAJ-2} we see that \eqref{wk-MNa-J} holds for all $\w=\bv+\h+\nabla\phi$, where $\bv\in Y$, $\h\in \Bbb H_1(\O)$ and $\phi\in H^1(\O)$. So \eqref{wk-MNa-J} holds for all $\w\in \mH(\O,\curl)$.
Hence $\u\in Y$ is a weak solution of \eqref{MNa-J}.
By the strongly monotonicity of $\mA_{\h_2}$ we know that the weak solution $\u$ is unique in $Y$, and the map $\J+\h_1\mapsto \u$ is continuous from $Y^*$ to $Y$.
\end{proof}

\subsection{The co-normal BVP of the Maxwell System}\

In this section we consider the following
\eq\label{MCo-J}
\left\{\aligned
&\curl [\mH(x,\curl\u+\h_2(x))]=\J(x)+\h_1(x)\q &\text{in }\O,\\
&\div\u=0\q &\text{in }\O,\\
&\nu\cdot\mH(x,\curl\u+\h_2(x))=H^0_n\q&\text{on }\p\O.
\endaligned\right.
\eeq

\begin{Def} Assume $\O, \J, \h_1, \h_2$ satisfy \eqref{cond-MJ} and $H^0_n\in H^{-1/2}(\p\O)$. We say $\u$ is a weak solution of \eqref{MCo-J} if
\begin{itemize}
\item[(i)] $\u\in \mH(\O,\curl,\div0)$, $\mH(x,\curl\u(x)+\h_2(x))\in L^{2,-1/2}_\nu(\O,\Bbb R^3)$;
 \item[(ii)] the equality $\nu\cdot\mH(x,\curl\u(x)+\h_2(x))=H^0_n$ holds in the $H^{-1/2}(\p\O)$ sense;
\item[(iii)] for all $\bv\in H^1_{t0}(\O,\Bbb R^3)$ it holds that
$$\int_\O\langle\mH(x,\curl\u+\h_2),\curl\bv\rangle dx
=\int_\O\langle \J+\h_1,\bv\rangle dx.
$$
\end{itemize}
\end{Def}

To prove existence of solutions to \eqref{MCo-J},  in addition to \eqref{cond-MJ}, we assume the following conditions:
\begin{itemize}
\item[$(H_{3N})$] For any $x\in\p\O$ and $\z,\w\in\Bbb R^3$, the equality $\nu\cdot\z=\nu\cdot\mB(x,\w)$ holds if and only if $\nu\cdot\w=\nu\cdot\mH(x,\z)$.
\end{itemize}
\begin{itemize}
\item[($H^0$)] $H^0_n\in H^{1/2}(\p\O)$, and there exists $\H^0\in H^{1/2}(\p\O,\Bbb R^3)$ such that $\nu\cdot\H^0=H^0_n$.
\end{itemize}

\begin{Lem}\label{Lem-MCo} Assume that  $\O, \J, \h_1$ satisfy \eqref{cond-MJ}, $\mH$ satisfies $(H_1), (H_3), (H_{3N})$, $H^0_n$ satisfies  $(H^0$). Let $\Y^0$ be determined by $\J$ and $\h_1$ by \eqref{Y0}.
\begin{itemize}
\item[(i)] Given $\h_2\in\Bbb H_2(\O)$, if  \eqref{MCo-J} has a weak solution  $\u\in \mH(\O,\curl)$, then there exists $\h^0_1\in \Bbb H_1(\O)$ (depending on $\h_1, \h_2$) such that the following problem
  \eq\label{MCo-J2}
\left\{\aligned
&\div[\mB(x,\Y^0+\h^0_1+\nabla\phi)]=0\q &\text{\rm in }\O,\\
&\nu\cdot\mB(x,\Y^0+\h^0_1+\nabla\phi)=\nu\cdot\mB(x,\H^0)\q&\text{\rm on }\p\O,
\endaligned\right.
\eeq
 has a weak solution $\phi$, and $\phi$ satisfies orthogonality condition \eqref{orth-BYH2}.
\item[(ii)] If there exists $\h^0_1\in \Bbb H_1(\O)$  such that \eqref{MCo-J2} has a weak solution $\phi\in H^1(\O)$, and let $\h_2\in \Bbb H_2(\O)$ (depending on $\h_1, \h_1^0$) be such that  \eqref{orth-BYH2} holds, then  \eqref{MCo-J} has a weak solution $\u\in H^1_{n0}(\O,\div0)$.
\end{itemize}
\end{Lem}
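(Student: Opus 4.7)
The plan is to prove (i) and (ii) as mutually inverse constructions, in direct analog of Lemma \ref{Lem-equiv-MnC-J}: I use the $L^2$-decomposition \eqref{dec-0} together with the identification of $\curl H^1_{n0}(\O,\div0)$ in \eqref{im-curl} to move between the first-order Maxwell system and the scalar Neumann-type problem for $\phi$, with $(H_3)$ handling the interior nonlinearity and $(H_{3N})$ handling the boundary translation between $\nu\cdot\mH(\cdot)$ and $\nu\cdot\mB(\cdot)$.

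For part (i), given a weak solution $\u$, I set $\Y(x)=\mH(x,\curl\u(x)+\h_2(x))$. The equation gives $\curl\Y=\J+\h_1=\curl\Y^0$, so $\Y-\Y^0\in\mathcal H(\O,\curl0)$ and by the first line of \eqref{dec-0} there exist $\h^0_1\in\Bbb H_1(\O)$ and $\phi\in H^1(\O)$ with $\Y=\Y^0+\h^0_1+\nabla\phi$. Inverting through $(H_3)$ yields $\curl\u+\h_2=\mB(x,\Y^0+\h^0_1+\nabla\phi)$; taking divergence gives the first equation of \eqref{MCo-J2}, and reading $\nu\cdot\Y=H^0_n=\nu\cdot\H^0$ on $\p\O$ through $(H_{3N})$ converts this into the Neumann boundary data $\nu\cdot\mB(x,\H^0)$. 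The orthogonality \eqref{orth-BYH2} is then automatic, because $\mB(\cdot)-\h_2=\curl\u\in\curl H^1_{n0}(\O,\div0)=\mathcal H^\Gamma(\O,\div0)\subset\Bbb H_2(\O)^\perp_{L^2(\O)}$ by \eqref{im-curl}.

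For part (ii), I would run the same argument backwards. Set $\w(x)=\mB(x,\Y^0+\h^0_1+\nabla\phi)-\h_2$. The first equation of \eqref{MCo-J2} gives $\div\w=0$, while \eqref{orth-BYH2} forces $\w\perp_{L^2(\O)}\Bbb H_2(\O)$, so the second line of \eqref{dec-0} places $\w\in\mathcal H^\Gamma(\O,\div0)$. The surjectivity in \eqref{im-curl} then produces $\u\in H^1_{n0}(\O,\div0)$ with $\curl\u=\w$; equivalently $\mH(x,\curl\u+\h_2)=\Y^0+\h^0_1+\nabla\phi$ by $(H_3)$. Taking curl of both sides recovers the first equation of \eqref{MCo-J}; pairing with $\nu$ on $\p\O$ and applying $(H_{3N})$ to the Neumann condition in \eqref{MCo-J2} gives the co-normal condition $\nu\cdot\mH(x,\curl\u+\h_2)=H^0_n$. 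Finally, for $\bv\in H^1_{t0}(\O,\Bbb R^3)$, integration by parts against the decomposition $\Y^0+\h^0_1+\nabla\phi$ yields the weak identity $\int_\O\mH(\cdot)\cdot\curl\bv\,dx=\int_\O(\J+\h_1)\cdot\bv\,dx$: the $\Y^0$ piece reproduces the right-hand side via $\curl\Y^0=\J+\h_1$, the $\h^0_1$ piece vanishes since $\curl\h^0_1=\0$, and the $\nabla\phi$ piece vanishes because $\bv_T=\0$ forces $\nu\cdot\curl\bv=0$ on $\p\O$.

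The main obstacle will be keeping the boundary data consistent across all three formulations: the co-normal condition in \eqref{MCo-J} lives in $H^{-1/2}(\p\O)$, the Neumann condition in \eqref{MCo-J2} is tested against $H^{1/2}(\p\O)$, and the equivalence provided by $(H_{3N})$ is pointwise in nature. I expect to handle this by interpreting both boundary equalities as identities of normal traces in $\mathcal H(\O,\div0)$, where the duality pairing $\langle\cdot,\cdot\rangle_{\p\O,1/2}$ makes the correspondence through $(H_{3N})$ unambiguous once $\curl\u+\h_2=\mB(x,\Y^0+\h^0_1+\nabla\phi)$ has been established in the interior. Everything else is bookkeeping with the $L^2$-decompositions already catalogued in \eqref{dec-0} and \eqref{im-curl}.
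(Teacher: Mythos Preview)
Your proposal is correct and follows essentially the same route as the paper's proof: both set $\Y=\mH(x,\curl\u+\h_2)$, use the first line of \eqref{dec-0} to write $\Y=\Y^0+\h^0_1+\nabla\phi$, invert via $(H_3)$, read off \eqref{MCo-J2} from $\div(\curl\u+\h_2)=0$ and the co-normal condition translated through $(H_{3N})$ and $(H^0)$, and obtain \eqref{orth-BYH2} from $\curl\u\in\mathcal H^\Gamma(\O,\div0)$; the converse direction is likewise identical. Your explicit verification of the weak identity in part (ii) via the three pieces $\Y^0$, $\h^0_1$, $\nabla\phi$ is a welcome addition that the paper leaves implicit; just note that for the $\h^0_1$ piece you also need the boundary term $\langle\nu\times\h^0_1,\bv\rangle_{\p\O,1/2}$ to vanish, which it does since $\bv_T=\0$.
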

\begin{proof} {\it Step 1}. If $\u\in \mH(\O,\curl)$ is a weak solution of \eqref{MCo-J}, let $\Z=\curl\u$. From the first equation in \eqref{MCo-J} we see that there exist $\h^0_1\in \Bbb H_1(\O)$ and $\phi\in H^1(\O)$ such that
$\mH(x,\Z+\h_2)=\Y^0+\h^0_1+\nabla\phi$.
By $(H_3)$ we have
$\Z=\mB(x,\Y^0+\h^0_1+\nabla\phi)-\h_2,
$
and from the second and third equalities in \eqref{MCo-J} we have
\eq\label{MCo-J3}
\left\{\aligned
&\div[\mB(x,\Y^0+\h^0_1+\nabla\phi)]=0\q &\text{in }\O,\\
&\nu\cdot\mH(x,\Z+\h_2)=H^0_n\q&\text{on }\p\O.
\endaligned\right.
\eeq
Since $\Z\in \curl \mH(\O,\curl,0)=\mH^\Gamma(\O,\div0)$, it satisfies the orthogonality condition \eqref{orth-BYH2}.
From $(H_{3N})$ and  $(H^0)$ we see that the boundary condition in \eqref{MCo-J3} is equivalent to the boundary condition in \eqref{MCo-J2}.

{\it Step 2}. Assume $(\h^0_1,\phi)$ solves \eqref{MCo-J2} and there exists $\h_2$ such that  \eqref{orth-BYH2} holds. Then $$\Z\equiv \mB(x,\Y^0(x)+\h^0_1(x)+\nabla\phi(x))-\h_2\in \mH^\Gamma(\O,\div0).
$$
Hence there exists $\u\in H^1_{n0}(\O,\div0)$ such that $\curl\u=\Z$. Then
$$
\curl\u+\h_2=\mB(x,\Y^0(x)+\h^0_1(x)+\nabla\phi(x)).
$$
By $(H_3)$, $(H_{3N})$, $(H^0)$ and \eqref{MCo-J2} we see that  $\u$ is a weak solution of \eqref{MCo-J} for this $\h_2$.
\end{proof}

\begin{Thm} Assume $\O, \J, \h_1, \mH(x,\z)$ satisfy \eqref{cond-MJ}, $(H_{3N})$, and $H^0_n$ satisfies $(H^0)$. For any $\h_1^0\in\Bbb H_1(\O)$, there exists
$\h_2\in\Bbb H_2(\O)$ (depending $\h_1, \h_1^0$) such that \eqref{MCo-J} has a weak solution $\u\in H^1_{n0}(\O,\div0)$.
\end{Thm}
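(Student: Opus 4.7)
The plan is to mirror the proof of Theorem~\ref{Thm1-ex-MnC-J}: use Lemma~\ref{Lem-MCo} to reduce the co-normal system \eqref{MCo-J} to the scalar quasilinear Neumann problem \eqref{MCo-J2} coupled with the orthogonality condition \eqref{orth-BYH2}, produce $\phi$ via Browder--Minty, select $\h_2\in\Bbb H_2(\O)$ so that \eqref{orth-BYH2} is enforced, and then invoke Lemma~\ref{Lem-MCo}(ii) to lift back to $\u$. The only structural change compared with Theorem~\ref{Thm1-ex-MnC-J} is that the Neumann data for the scalar problem is now $\nu\cdot\mB(x,\H^0)$, obtained from $H^0_n$ via $(H^0)$ and the equivalence built into $(H_{3N})$.

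Concretely, I would fix $\h_1^0\in\Bbb H_1(\O)$ and let $\Y^0$ be the unique field produced by \eqref{Y0} from $(\J,\h_1)$. By Remark~\ref{Rem4.2}, the hypotheses \eqref{cond-MJ} transfer from $\mH$ to $\mB$ as $(B_1)$ and $(B_2)$. Exactly as in Step~1 of the proof of Theorem~\ref{Thm-Mono-MD-J}, the formula
$$
\langle \mT(\phi),\eta\rangle_{\dot H^1(\O)^*,\dot H^1(\O)}=\int_\O \langle\mB(x,\Y^0+\h_1^0+\nabla\phi),\nabla\eta\rangle dx
$$
defines a hemi-continuous, strongly monotone operator $\mT:\dot H^1(\O)\to\dot H^1(\O)^*$, which is surjective by the Browder--Minty theorem. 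The boundary functional $\langle F,\eta\rangle=\langle\nu\cdot\mB(x,\H^0),\eta\rangle_{\p\O,1/2}$ belongs to $\dot H^1(\O)^*$ because $\H^0\in H^{1/2}(\p\O,\Bbb R^3)$ by $(H^0)$ and $\mB(x,\cdot)$ is Lipschitz by $(B_2)$, so that $\nu\cdot\mB(x,\H^0)\in H^{-1/2}(\p\O)$. Solving $\mT(\phi)=F$ yields a unique $\phi=\phi_{\h_1^0}\in\dot H^1(\O)$ which realizes the weak form of \eqref{MCo-J2}.

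With this $\phi$ in hand, I would define $\h_2\in\Bbb H_2(\O)$ by projecting $\mB(x,\Y^0+\h_1^0+\nabla\phi_{\h_1^0})$ onto $\Bbb H_2(\O)$ in $L^2(\O,\Bbb R^3)$: given an orthonormal basis $\{\y^1,\dots,\y^m\}$ of $\Bbb H_2(\O)$, set
$$
\h_2=\sum_{k=1}^m \Bigl(\int_\O\langle \mB(x,\Y^0+\h_1^0+\nabla\phi_{\h_1^0}),\y^k\rangle dx\Bigr)\y^k.
$$
By construction, $\mB(x,\Y^0+\h_1^0+\nabla\phi_{\h_1^0})-\h_2\perp_{L^2(\O)}\Bbb H_2(\O)$, which is \eqref{orth-BYH2}. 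Lemma~\ref{Lem-MCo}(ii) then produces a weak solution $\u\in H^1_{n0}(\O,\div0)$ of \eqref{MCo-J} with this choice of $\h_2$, finishing the argument.

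The main obstacle I anticipate is in the third step above: verifying that the $\phi$ produced by the $\dot H^1$-Browder--Minty machinery actually qualifies as a weak solution of \eqref{MCo-J2} in the sense required by Lemma~\ref{Lem-MCo}(ii), and that the reconstructed $\u$ satisfies $\nu\cdot\mH(x,\curl\u+\h_2)=H^0_n$ in $H^{-1/2}(\p\O)$. The translation between the normal trace of $\mH(\cdot,\curl\u+\h_2)$ and that of $\mB(\cdot,\H^0)$ rests crucially on $(H_{3N})$, and since the $\dot H^1$ formulation only enforces the Neumann identity against zero-mean test functions, one must check that no compatibility obstruction is hiding in the constant-test direction before concluding that the lifted $\u$ really satisfies the co-normal boundary condition pointwise in the trace sense.
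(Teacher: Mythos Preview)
Your proposal is correct and follows essentially the same route as the paper: reduce via Lemma~\ref{Lem-MCo} to the scalar Neumann problem \eqref{MCo-J2}, solve it by the Browder--Minty argument on $\dot H^1(\O)$ exactly as in Proposition~\ref{Prop-sol}(i) and Theorem~\ref{Thm1-ex-MnC-J}, then pick $\h_2$ to enforce \eqref{orth-BYH2} and lift back through Lemma~\ref{Lem-MCo}(ii). The compatibility concern you flag in the last paragraph is not addressed explicitly in the paper either; it is absorbed into the assertion that the $\dot H^1$-variational solution of \eqref{MCo-J2} qualifies as the weak solution needed by Lemma~\ref{Lem-MCo}(ii).
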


\begin{proof} Given any $\h_1$, let $\Y^0$ be determined by $\J$ and $\h_1$ by \eqref{Y0}. Then $\Y^0\in H^1(\O,\Bbb R^3)$.
By using the monotone operator method (see the proofs of Proposition \ref{Prop-sol} (i), Lemma \ref{Lem-Dphi}),  we can show that for any $\h_1^0\in \Bbb H_1(\O)$, \eqref{MCo-J2} has a unique solution $\phi\in \dot H^1(\O)$
(and $\phi$ depends on $\h_1,\h_1^0$). We choose $\h_2\in\Bbb H_2(\O)$ which depends on $\h_1, \h_1^0$ such that \eqref{orth-BYH2} holds. Then from Lemma \ref{Lem-MCo}, for this choice of $\h_2$,  \eqref{MCo-J} has a weak solution $\u$.
\end{proof}

\section{The Maxwell-Stokes System with Solution-dependent Current}

In this section we consider the Maxwell-Stokes system where the current is of the form $\f(x,\curl\u+\h_2)$:
\eq\label{MSDN-f}
\left\{\aligned
&\curl [\mH(x,\curl\u+\h_2(x))]=\f(x,\curl\u+\h_2)+\h_1(x)+\nabla p\q &\text{in }\O,\\
&\div\u=0\q &\text{in }\O.\\
\endaligned\right.
\eeq
Throughout this section we assume that
\eq\label{cond-MSf}
\aligned
&\text{$\O$ is a bounded domain in $\Bbb R^3$ with a $C^2$ boundary,}\\
&\text{$\mH(x,\z)$ satisfies $(H_1), (H_2), (H_3)$,\q  $\f(x,\z)$ satisfies $(f_1)$,}\\
&\h_1\in\Bbb H_1(\O),\q\h_2\in\Bbb H_2(\O).
\endaligned
\eeq

It was observed in \cite{P4} that, if \eqref{MSDN-f} has a solution $\u$, then
$\f(x,\curl\u+\h_2)+\h_1+\nabla p\in \mH^\Gamma(\O,\div0),$
which holds if and only if
$$
\aligned
&-\Delta p=\div[\f(x,\curl\u(x)+\h_2(x))]\q\text{in }\O,\\
&\int_{\Gamma_i}\{{\p p\over\p\nu}+\nu\cdot\f(x,\curl\u(x)+\h_2(x))\}dS=0,\q i=1,\cdots, m+1.
\endaligned
$$
It suggests $p$ satisfies  Neumann type boundary condition. In this section, in addition to a boundary condition for $\u$.\footnote{The case where $\u$ satisfies a Dirichlet boundary condition $\u_T=\u^0_T$ can be treated by the methods in \cite{P4}, hence will not be discussed here. We also mention that most of analyzes in this section are valid for the nonlinear term of the form $\f(x,\u+\h_2)$.} we assume $p$ satisfies the following boundary condition
$${\p p\over\p\nu}=-\nu\cdot\f(x,\curl\u(x)+\h_2(x))\q \text{on }\p\O,
$$

\subsection{The normal curl BVP}\

In this subsection we consider the following problem
\eq\label{MSnCN-f1}
\left\{\aligned
&\curl [\mH(x,\curl\u+\h_2(x))]=\f(x,\curl\u+\h_2(x))+\h_1(x)+\nabla p\q &\text{in }\O,\\
&\div\u=0\q &\text{in }\O,\\
&\nu\cdot\curl\u+\nu\cdot\h_2=B^0_n,\q {\p p\over\p\nu}=-\nu\cdot\f(x,\curl\u+\h_2)\q &\text{on }\p\O.
\endaligned\right.
\eeq
We assume \eqref{cond-MSf} and $B^0_n$ satisfies \eqref{condMnC-J}.

\begin{Def}\label{Def-MSnCN-f1} We say that $(\u,p)$ is a weak solution of \eqref{MSnCN-f1} if $\u\in \mH(\O,\curl,\div0)$ and $p\in H^1(\O)$ such that
\begin{itemize}
\item[(i)] $\mH(x,\curl\u(x)+\h_2(x))\in L^2(\O,\Bbb R^3)$, $\f(x,\curl\u(x)+\h_2(x))\in L^{2,-1/2}(\O,\Bbb R^3)$;
\item[(ii)] the equality $\nu\cdot\curl\u(x)+\nu\cdot\h_2(x)=B^0_n$ holds in $H^{-1/2}(\p\O)$;
\item[(iii)]
\eq\label{wk-MSnCN-f1}
\aligned
&\int_\O\langle\mH(x,\curl\u+\h_2),\curl\bv\rangle dx\\
=&\int_\O\langle \f(x,\curl\u+\h_2)+\h_1+\nabla p,\bv\rangle dx, \q \forall \bv\in H^1_{t0}(\O,\Bbb R^3),
\endaligned
\eeq
\eq\label{wk-MSnCN-f1p}
\int_\O\langle \f(x,\curl\u+\h_2)+\nabla p,\nabla\eta\rangle dx=0,\q\forall \eta\in H^1(\O).
\eeq
\end{itemize}
\end{Def}

We shall show the relation between \eqref{MSnCN-f1} with the $Z$-system
\eq\label{MSnCN-f1Z}
\left\{\aligned
&\curl [\mH(x,\Z+\h_2)]=\mP_\nu[\f(x,\Z+\h_2)]+\h_1\q &\text{in }\O,\\
&\div\Z=0\q &\text{in }\O,\\
&\nu\cdot\Z+\nu\cdot\h_2=B^0_n\q& \text{on }\p\O,
\endaligned\right.
\eeq
and we shall show the equivalence of $Z$-system \eqref{MSnCN-f1Z} and the $Y$-system
\eq\label{MSnCN-f1Y}
\left\{\aligned
&\curl\Y=\mP_\nu[\f(x,\mB(x,\Y))]+\h_1\q &\text{in }\O,\\
&\div\mB(x,\Y)=0\q &\text{in }\O,\\
&\nu\cdot\mB(x,\Y)=B^0_n\q& \text{on }\p\O.
\endaligned\right.
\eeq
Here $\mP_\nu$ is the projection operator onto $\mH^\Gamma(\O,\div0)$, see section \ref{SecA}.

\begin{Def}\label{Def-MSnCN-f1Z} {\rm (i)} We say $\Z$ is a weak solution of \eqref{MSnCN-f1Z} if $\Z\in \mH(\O,\div0)$ such that
$$\aligned
&\mH(x,\Z(x)+\h_2(x))\in L^2(\O,\Bbb R^3),\q \f(x,\Z(x)+\h_2(x))\in L^2(\O,\Bbb R^3),\\
&\nu\cdot\Z+\nu\cdot\h_2=B^0_n\q\text{holds in the sense of $H^{-1/2}(\p\O)$,}\\
&\int_\O\langle \mH(z,\Z+\h_2), \curl\bv\rangle dx
=\int_\O \langle \mP_\nu[\f(x,\Z+\h_2)]+\h_1,\bv\rangle dx,\q\forall \bv\in H^1_{t0}(\O,\Bbb R^3).
\endaligned
$$

{\rm (ii)} We say $\Y$ is a weak solution of \eqref{MSnCN-f1Y} if $\Y\in L^2(\O,\Bbb R^3)$ such that
$$\aligned
&\mB(x,\Y(x))\in \mH(\O,\div0),\q \f(x,\mB(x,\Y(x)))\in L^2(\O,\Bbb R^3),\\
&\nu\cdot\mB(x,\Y(x))=B^0_n\q\text{holds in the sense of $H^{-1/2}(\p\O)$,}\\
&\int_\O\langle \Y, \curl\bv\rangle dx=\int_\O \langle \mP_\nu[\f(x,\mB(x,\Y))]+\h_1,\bv\rangle dx,\q\forall \bv\in H^1_{t0}(\O,\Bbb R^3).
\endaligned
$$
\end{Def}

\begin{Lem}\label{Lem-MSnCN-f1Y}  Assume \eqref{cond-MSf} and \eqref{condMnC-J}.
\begin{itemize}
\item[(i)] If $\Y\in L^2(\O,\Bbb R^3)$ is a weak solution of \eqref{MSnCN-f1Y}, then $\Y\in \mH(\O,\curl)$, and the first equality in \eqref{MSnCN-f1Y} holds a.e. in $\O$.

\item[(ii)] If $(\u,p)\in \mH(\O,\curl,\div0)\times H^1(\O)$ is a weak solution of \eqref{MSnCN-f1} and letting $\Z=\curl\u$, then $\Z\in \mH(\O,\div0)$ and it is a weak solution of \eqref{MSnCN-f1Z}. On the other hand, if $\Z\in \mH(\O,\div0)$ is a weak solution of \eqref{MSnCN-f1Z}, then there exists $\h_2^0\in \Bbb H_2(\O)$ such that \eqref{MSnCN-f1} with $\h_2$ replaced by $\h_2+\h_2^0$ has a weak solution $(\u,p)\in H^1_{n0}(\O,\div0)\times H^1(\O)$, and $\curl\u=\Z-\h_2^0$. If furthermore $\Z\perp_{L^2(\O)}\Bbb H_2(\O)$, then $\h^0_2=\0$.
\item[(iii)] If $\Z\in \mH(\O,\div0)$ is a weak solution of \eqref{MSnCN-f1Z} and letting
$\Y=\mH(x,\Z(x)+\h_2(x)),$
then $\Y\in \mH(\O,\curl)$ and it is a weak solution of \eqref{MSnCN-f1Y}. On the other hand, if $\Y\in L^2(\O,\Bbb R^3)$ is a weak solution of \eqref{MSnCN-f1Y} and letting
$\Z=\mB(x,\Y(x))-\h_2(x),$
then $\Z\in \mH(\O,\div0)$ and it is a weak solution of \eqref{MSnCN-f1Z}.
\end{itemize}
\end{Lem}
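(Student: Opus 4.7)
The overall strategy is to unwind each weak formulation and exploit three structural facts from earlier in the paper: first, Definition of $\mP_\nu$ characterizes it via the Neumann problem \eqref{wkNeumann}; second, the orthogonal decomposition $\mH(\O,\div0)=\mH^\Gamma(\O,\div0)\oplus_{L^2(\O)}\Bbb H_2(\O)$ from \eqref{dec-0}, together with $\curl H^1_{n0}(\O,\div0)=\mH^\Gamma(\O,\div0)$ from \eqref{im-curl}; and third, the pointwise bijection $\w=\mH(x,\z)\Leftrightarrow \z=\mB(x,\w)$ supplied by $(H_3)$. Part (i) is essentially a regularity remark, part (ii) requires identifying $\nabla p$ with the Neumann projection and carrying the $\Bbb H_2$-correction, and part (iii) is a direct substitution using $(H_3)$.

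For (i), read the weak form of \eqref{MSnCN-f1Y} as $\curl\Y=\mP_\nu[\f(x,\mB(x,\Y))]+\h_1$ in the sense of distributions tested against $H^1_{t0}(\O,\Bbb R^3)$. The right-hand side belongs to $L^2(\O,\Bbb R^3)$ by $(f_1)$, $(B_1)$ and the boundedness of $\mP_\nu$ on $L^2$ (Remark~\ref{Lem-A.2}), so $\curl\Y\in L^2(\O,\Bbb R^3)$, hence $\Y\in\mH(\O,\curl)$ and the first equation of \eqref{MSnCN-f1Y} holds a.e. For (iii), given a weak $\Z$-solution put $\Y=\mH(x,\Z+\h_2)$; then $\mB(x,\Y)=\Z+\h_2$ by $(H_3)$, and since $\div\h_2=0$, $\nu\cdot\h_2=0$ (note $\h_2\in\Bbb H_2(\O)$ has vanishing tangential component, not normal; but the normal trace $\nu\cdot\h_2$ is nonetheless well defined and one checks directly that $\nu\cdot\mB(x,\Y)=\nu\cdot\Z+\nu\cdot\h_2=B^0_n$), and the weak equation transfers verbatim. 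The reverse substitution $\Z=\mB(x,\Y)-\h_2$ is symmetric, and the curl-regularity of $\Y$ follows from~(i).

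The substantive step is (ii). Forward: given $(\u,p)$, the test equation \eqref{wk-MSnCN-f1p} with $\w:=\f(x,\curl\u+\h_2)$ coincides with \eqref{wkNeumann} defining $\phi_\w$, and by the uniqueness of the weak Neumann solution in $\dot H^1(\O)$ we get $\nabla p=\nabla\phi_\w$; therefore
\[
\mP_\nu[\f(x,\curl\u+\h_2)]=\f(x,\curl\u+\h_2)+\nabla p.
\]
Substituting into \eqref{wk-MSnCN-f1} turns it into the weak form of \eqref{MSnCN-f1Z} with $\Z=\curl\u$; the boundary and divergence conditions transfer trivially. Reverse: given $\Z\in\mH(\O,\div0)$ solving \eqref{MSnCN-f1Z}, decompose $\Z=\tilde\Z+\h_2^0$ with $\tilde\Z\in\mH^\Gamma(\O,\div0)$ and $\h_2^0\in\Bbb H_2(\O)$ by \eqref{dec-0}; \eqref{im-curl} yields $\u\in H^1_{n0}(\O,\div0)$ with $\curl\u=\tilde\Z=\Z-\h_2^0$. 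Define $p\in\dot H^1(\O)$ as the unique Neumann solution associated with $\w=\f(x,\Z+\h_2)$, so that $\mP_\nu[\w]=\w+\nabla p$. Replacing $\h_2$ by $\h_2+\h_2^0$ in \eqref{MSnCN-f1} one verifies the three weak-solution conditions of Definition~\ref{Def-MSnCN-f1}: the boundary relation reads $\nu\cdot\curl\u+\nu\cdot(\h_2+\h_2^0)=\nu\cdot\Z+\nu\cdot\h_2=B^0_n$; the identity \eqref{wk-MSnCN-f1} follows from the weak form of \eqref{MSnCN-f1Z} using $\curl\u+\h_2+\h_2^0=\Z+\h_2$ and the $\mP_\nu$-identity above; and \eqref{wk-MSnCN-f1p} holds by the very definition of $p$. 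When $\Z\perp_{L^2(\O)}\Bbb H_2(\O)$, $\h_2^0=\0$ by uniqueness of the decomposition.

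The main obstacle is the bookkeeping in (ii), especially keeping track of the $\Bbb H_2(\O)$-correction $\h_2^0$ that appears in the reverse direction and verifying that $p$ — built from the Neumann projection of $\f(x,\Z+\h_2)$, which already involves the \emph{shifted} $\h_2+\h_2^0$ via $\curl\u$ — correctly produces the prescribed Neumann boundary datum $-\nu\cdot\f(x,\curl\u+\h_2+\h_2^0)$. This works because $\Z+\h_2=\curl\u+\h_2+\h_2^0$, so the two $\f$'s are literally the same vector field, and no additional orthogonality is needed.
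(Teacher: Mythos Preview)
Your proof is correct and, for part~(i), follows essentially the same route as the paper: both identify $\curl\Y$ distributionally via the weak formulation and observe that the right-hand side lies in $L^2$, forcing $\Y\in\mH(\O,\curl)$ with the first equation holding a.e. The paper in fact only writes out part~(i) and leaves~(ii) and~(iii) to the reader; your arguments for these---identifying $\nabla p$ with the Neumann-projection correction via \eqref{wkNeumann}, carrying the $\Bbb H_2(\O)$-component $\h_2^0$ through the decomposition \eqref{dec-0}, and the direct substitution via $(H_3)$---are exactly the intended fillings-in.
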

\begin{proof} We only prove (i). Assume $\Y\in L^2(\O,\Bbb R^3)$ is a weak solution of \eqref{MSnCN-f1Y}. By Definition \ref{Def-MSnCN-f1Z} (ii) and by $(f_1)$ we see that, for all $\bv\in H^1_{t0}(\O,\Bbb R^3)$ we have
$$\aligned
\int_\O\langle\Y,\curl\bv\rangle dx=&\int_\O\langle\mP_\nu[\f(x,\mB(x,\Y))]+\h_1,\bv\rangle dx\\
\leq & \|\mP_\nu[ \f(x,\mB(x,\Y))]+\h_1\|_{L^2(\O)}\|\bv\|_{L^2(\O)}.
\endaligned
$$
So $\curl\Y$ exists as a functional on $H^1_{t0}(\O,\Bbb R^3)$. For any $\w\in \mathcal D(\O,\Bbb R^3)$ we have
$$\aligned
&\langle\curl\Y,\w\rangle_{\mathcal D(\O,\Bbb R^3), \mathcal D'(\O,\Bbb R^3)}=\int_\O\langle\Y,\curl\w\rangle dx\\
=&\int_\O\langle\mP_\nu[\f(x,\mB(x,\Y))]+\h_1,\w\rangle dx
\leq \|\mP_\nu[ \f(x,\mB(x,\Y))]+\h_1\|_{L^2(\O)}\|\w\|_{L^2(\O)}.
\endaligned
$$
Hence $\curl\Y\in L^2(\O,\Bbb R^3)$,   and
$\curl\Y=\mP_\nu[\f(x,\mB(x,\Y))]+\h_1.$
\end{proof}

To solve \eqref{MSnCN-f1Y}, we use \eqref{dec-1} to write $\Y=\w+\hat\h_1+\nabla\psi$, with $\w\in \mH^\Sigma_0(\O,\div0)$, $\hat\h_1\in\Bbb H_1(\O)$, $\psi\in H^1(\O)$.
If $\Y$ satisfies \eqref{MSnCN-f1Y} then $\psi$ satisfies
\eq\label{MSnCN-f1B}
\left\{\aligned
&\div\mB(x,\w+\hat\h_1+\nabla\psi)=0\q &\text{in }\O,\\
&\nu\cdot\mB(x,\w+\hat\h_1+\nabla\psi)=B^0_n\q&\text{on }\p\O.
\endaligned\right.
\eeq

\begin{Prop}\label{Prop-ex-MnC-BY} Assume $\O, \mH(x,\z), B_n^0$ satisfy \eqref{cond-MSf} and \eqref{condMnC-J}, $\w\in \mH^\Sigma_0(\O,\div0)$ and $\hat\h_1\in\Bbb H_1(\O)$.
Then  \eqref{MSnCN-f1B} has a unique solution $\psi=\psi_{\w,\hat\h_1}\in \dot H^1(\O)$.
 The map $(\w,\hat\h_1)\mapsto \nabla\psi_{\w,\hat\h_1}$ is a locally Lipschitz and bounded map from $\mH^\Sigma_0(\O,\div0)\times\Bbb H_1(\O)$ into $\text{grad} H^1(\O)$.
\end{Prop}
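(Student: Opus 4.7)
The equation \eqref{MSnCN-f1B} is the Euler--Lagrange equation of a variational/monotone operator problem for $\psi \in \dot H^1(\O)$. The plan is to reduce existence and uniqueness to the Browder--Minty surjectivity theorem applied to a strongly monotone operator built from $\mB$, and then to extract the Lipschitz and boundedness estimates directly from the strong monotonicity and the pointwise Lipschitz bound on $\mB$ provided by condition $(B_2)$.

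\textbf{Step 1: Weak formulation and monotone operator.} I would first reformulate \eqref{MSnCN-f1B} as: find $\psi\in \dot H^1(\O)$ such that
\[
\int_\O \langle \mB(x,\w+\hat\h_1+\nabla\psi),\nabla\eta\rangle\,dx = \langle B^0_n,\eta\rangle_{\p\O,1/2}, \qquad \forall\,\eta\in H^1(\O),
\]
which is meaningful since $B^0_n\in\dot H^{-1/2}(\p\O)$ (the constants in $\eta$ drop out). For fixed $(\w,\hat\h_1)$, define $\mT_{\w,\hat\h_1}:\dot H^1(\O)\to \dot H^1(\O)^*$ by
\[
\langle \mT_{\w,\hat\h_1}(\psi),\eta\rangle = \int_\O \langle \mB(x,\w+\hat\h_1+\nabla\psi),\nabla\eta\rangle\,dx.
\]
Using $(B_2)$ together with Remark~\ref{Lem-mono-B} (after recognizing that $\|\nabla\psi\|_{L^2(\O)}$ is an equivalent norm on $\dot H^1(\O)$ by Poincar\'e), $\mT_{\w,\hat\h_1}$ is continuous and strongly monotone with constant $\lam_0$. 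By the Browder--Minty theorem (exactly as in the proofs of Proposition~\ref{Prop-sol}(i) and Theorem~\ref{Thm1-ex-MnC-J}), $\mT_{\w,\hat\h_1}$ is surjective; strong monotonicity then yields uniqueness. This produces the unique solution $\psi_{\w,\hat\h_1}\in\dot H^1(\O)$.

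\textbf{Step 2: Boundedness.} Testing the weak equation with $\eta=\psi_{\w,\hat\h_1}$ and splitting
\[
\int_\O\langle \mB(x,\w+\hat\h_1+\nabla\psi)-\mB(x,\w+\hat\h_1),\nabla\psi\rangle\,dx = \langle B^0_n,\psi\rangle_{\p\O,1/2} - \int_\O\langle \mB(x,\w+\hat\h_1),\nabla\psi\rangle\,dx,
\]
the left side is bounded below by $\lam_0\|\nabla\psi\|_{L^2(\O)}^2$ via \eqref{mono-B}. Applying $(B_1)$ to control $\|\mB(x,\w+\hat\h_1)\|_{L^2(\O)}\leq c_4(\|\w\|_{L^2(\O)}+\|\hat\h_1\|_{L^2(\O)})+\|g_4\|_{L^2(\O)}$, and combining the trace inequality with the Poincar\'e inequality on $\dot H^1(\O)$ to estimate $|\langle B^0_n,\psi\rangle_{\p\O,1/2}|\leq C(\O)\|B^0_n\|_{H^{-1/2}(\p\O)}\|\nabla\psi\|_{L^2(\O)}$, one divides through by $\|\nabla\psi\|_{L^2(\O)}$ to obtain a linear-growth bound
\[
\|\nabla\psi_{\w,\hat\h_1}\|_{L^2(\O)}\leq C\bigl(\|\w\|_{L^2(\O)}+\|\hat\h_1\|_{L^2(\O)}+\|g_4\|_{L^2(\O)}+\|B^0_n\|_{H^{-1/2}(\p\O)}\bigr),
\]
with $C=C(\O,\lam_0,c_4)$. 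This gives the asserted boundedness of the solution map.

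\textbf{Step 3: Lipschitz dependence.} For $(\w_i,\hat\h_{1,i})$, $i=1,2$, let $\psi_i=\psi_{\w_i,\hat\h_{1,i}}$ and write $\mathbf{a}_i=\w_i+\hat\h_{1,i}+\nabla\psi_i$. Subtracting the two weak equations and testing with $\eta=\psi_2-\psi_1$ yields
\[
\int_\O \langle \mB(x,\mathbf{a}_2)-\mB(x,\mathbf{a}_1),\nabla(\psi_2-\psi_1)\rangle\,dx = 0.
\]
Writing $\nabla(\psi_2-\psi_1)=(\mathbf{a}_2-\mathbf{a}_1)-(\w_2-\w_1)-(\hat\h_{1,2}-\hat\h_{1,1})$ and using \eqref{mono-B} together with the Lipschitz estimate $|\mB(x,\mathbf{a}_2)-\mB(x,\mathbf{a}_1)|\leq N_2|\mathbf{a}_2-\mathbf{a}_1|$ from $(B_2)$, one arrives at
\[
\lam_0\|\mathbf{a}_2-\mathbf{a}_1\|_{L^2(\O)}^2 \leq N_2\|\mathbf{a}_2-\mathbf{a}_1\|_{L^2(\O)}\bigl(\|\w_2-\w_1\|_{L^2(\O)}+\|\hat\h_{1,2}-\hat\h_{1,1}\|_{L^2(\O)}\bigr),
\]
hence $\|\mathbf{a}_2-\mathbf{a}_1\|_{L^2(\O)}\leq (N_2/\lam_0)(\|\w_2-\w_1\|_{L^2(\O)}+\|\hat\h_{1,2}-\hat\h_{1,1}\|_{L^2(\O)})$, and triangle inequality on $\nabla(\psi_2-\psi_1)$ yields a global Lipschitz estimate with constant $N_2/\lam_0+1$. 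This is stronger than local Lipschitz, so the proposition follows.

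\textbf{Main obstacle.} None of the steps is genuinely hard: once the correct monotone operator is identified and condition $(B_2)$ is used, the argument essentially mimics Proposition~\ref{Prop-sol}(i). The only minor care needed is to arrange the bookkeeping in Step~3 so that $\nabla(\psi_2-\psi_1)$ rather than $\mathbf{a}_2-\mathbf{a}_1$ is estimated on the left of the monotonicity inequality, and to verify the compatibility of the boundary datum $B^0_n\in\dot H^{-1/2}(\p\O)$ with the use of $\dot H^1(\O)$ as test space.
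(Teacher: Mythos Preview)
Your proof is correct and follows the same monotone operator strategy as the paper: define $\mT_{\w,\hat\h_1}$ on $\dot H^1(\O)$, invoke Browder--Minty for existence and uniqueness, then extract the a priori bound and the Lipschitz dependence from the strong monotonicity. Your Step~2 splitting (subtracting $\mB(x,\w+\hat\h_1)$ and using \eqref{mono-B}) differs cosmetically from the paper's (which applies the coercivity part of $(B_1)$ to the full argument $\w+\hat\h_1+\nabla\psi$), but both yield the same linear-growth bound on $\|\nabla\psi_{\w,\hat\h_1}\|_{L^2(\O)}$.

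The one substantive difference is in Step~3. The paper bounds $\mB(x,\mathbf a_2)-\mB(x,\mathbf a_1)$ using the growth estimate in $(B_1)$, which produces a constant depending linearly on $\|\w+\hat\h_1\|_{L^2(\O)}+\|\w'+\hat\h_1'\|_{L^2(\O)}$ and hence only a \emph{locally} Lipschitz conclusion. You instead use the pointwise Lipschitz bound $|\nabla_\w\mB|\leq N_2$ from $(B_2)$, which gives a uniform constant $N_2/\lam_0+1$ and therefore a \emph{global} Lipschitz estimate. Your argument is slightly sharper and proves more than the proposition states; the paper's route, on the other hand, would still work under the weaker hypothesis that $\mB$ satisfies $(B_1)$ and the monotonicity \eqref{mono-B} without the uniform gradient bound.
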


\begin{proof} As in the proof of Proposition \ref{Prop-sol} (i), we can define a map $\mT$ such that
$$
\langle \mT(\psi),\eta\rangle_{\dot H^1(\O)^*, \dot H^1(\O)}=\int_\O \langle\mB(x,\w(x)+\hat\h_1(x)+\nabla\psi), \nabla\eta\rangle dx,\q\forall \psi, \eta\in \dot H^1(\O).
$$
Using $(B_1)$ and $(B_2)$ we can show $\mT: \dot H^1(\O)\mapsto \dot H^1(\O)^*$ is Lipschitz and strongly monotone, hence $\mT$ is surjective. Hence there exists a unique $\psi\in \dot H^1(\O)$ such that
$$
\langle \mT(\psi),\eta\rangle_{\dot H^1(\O)^*, \dot H^1(\O)}=\langle  B^0_n,\eta\rangle_{\p\O,1/2},\q\forall \eta\in \dot H^1(\O).
$$
From this and $(B_1)$ we have
$$\aligned
&{c_3\over 2}\int_\O|\w+\hat\h_1+\nabla\psi_{\w,\hat\h_1}|^2-{1\over 2}\|g_3\|_{L^2(\O)}^2\\
\leq& \int_\O\langle \mB(x,\w+\hat\h_1+\nabla\psi_{\w,\hat\h_1}),\w+\hat\h_1+\nabla\psi_{\w,\hat\h_1}\rangle dx\\
=&\int_\O\langle \mB(x,\w+\hat\h_1+\nabla\psi_{\w,\hat\h_1}),\w+\hat\h_1\rangle dx+\langle B_n^0,\psi_{\w,\hat\h_1}\rangle_{\p\O,1/2}\\
\leq& {c_3\over 4}\int_\O|\w+\hat\h_1+\nabla\psi_{\w,\hat\h_1}|^2dx+({4c_4^2\over c_3}+{c_4^2\over 2})\int_\O|\w+\hat\h_1|^2dx+{1\over 2}\|g_4\|_{L^2(\O)}^2\\
&+{c_3\over 8}\|\nabla\psi_{\w,\hat\h_1}\|_{L^2(\O)}+C\|B^0_n\|_{H^{-1/2}(\p\O)},
\endaligned
$$
here we have used the Poincar\'e inequality for $\psi_{\w,\hat\h_1}\in \dot H^1(\O)$.
Hence
\eq\label{H1est-MSnCN-f1B}
\|\nabla\psi_{\w,\hat\h_1}\|_{L^2(\O)}\leq C(\|\w+\hat\h_1\|_{L^2(\O)}+\|B^0_n\|_{H^{-1/2}(\p\O)}+1),
\eeq
where $C$ depends only on the constants and $g_3, g_4$ in $(B_1)$. Thus $(\w,\hat\h_1)\mapsto \nabla\psi_{\w,\hat\h_1}$ is a bounded map with respect to the $L^2$ norms.

Using $(B_2)$ and Remark \ref{Lem-mono-B} we have
$$\aligned
0=&\int_\O\langle \mB(x,\w+\hat\h_1+\nabla\psi_{\w,\hat\h_1})-\mB(x,\w'+\hat\h_1'+\nabla\psi_{\w',\hat\h_1'}), \nabla(\psi_{\w,\hat\h_1}-\psi_{\w,\hat\h_1'})\rangle dx\\
=&\int_\O\big\langle \mB(x,\w+\hat\h_1+\nabla\psi_{\w,\hat\h_1})-\mB(x,\w'+\hat\h_1'+\nabla\psi_{\w',\hat\h_1'}), \\
&\qq \w+\hat\h_1+\nabla\psi_{\w,\hat\h_1}-\w'-\hat\h_1'-\nabla\psi_{\w',\hat\h_1'}\rangle dx\\
&-\int_\O \langle \mB(x,\w+\hat\h_1+\nabla\psi_{\w,\hat\h_1})-\mB(x,\w'+\hat\h_1'+\nabla\psi_{\w',\hat\h_1'}), \w+\hat\h_1-\w'-\hat\h_1'\big\rangle dx\\
\geq &\lam_0\|\w+\hat\h_1+\nabla\psi_{\w,\hat\h_1}-\w'-\hat\h_1'-\nabla\psi_{\w',\hat\h_1'}\|_{L^2(\O)}^2\\
&-\int_\O\langle \mB(x,\w+\hat\h_1+\nabla\psi_{\w,\hat\h_1})-\mB(x,\w'+\hat\h_1'+\nabla\psi_{\w',\hat\h_1'}), \w+\hat\h_1-\w'-\hat\h_1'\rangle dx.
\endaligned
$$
From this and using $(B_1)$ we have
$$\aligned
&\lam_0\|\w+\hat\h_1+\nabla\psi_{\w,\hat\h_1}-\w'-\hat\h_1'-\nabla\psi_{\w',\hat\h_1'}\|_{L^2(\O)}^2\\
\leq &\int_\O\langle \mB(x,\w+\hat\h_1+\nabla\psi_{\w,\hat\h_1})-\mB(x,\w'+\hat\h_1'+\nabla\psi_{\w',\hat\h_1'}), \w+\hat\h_1-\w'-\hat\h_1'\rangle dx\\
\leq & C\{\|\w+\hat\h_1+\nabla\psi_{\w,\hat\h_1}\|_{L^2(\O)}+\|\w'+\hat\h_1'+\nabla\psi_{\w',\hat\h_1'}\|_{L^2(\O)}+\|g_4\|_{L^2(\O)}\}
\|\w+\hat\h_1-\w'-\hat\h_1'\|_{L^2(\O)}\\
\leq &C\{\|\w+\hat\h_1\|_{L^2(\O)}+\|\w'+\hat\h_1'\|_{L^2(\O)}+1\}\|\w+\hat\h_1-\w'-\hat\h_1'\|_{L^2(\O)}.
\endaligned
$$
Here we have used \eqref{H1est-MSnCN-f1B}. It follows that
$$
\|\nabla\psi_{\w,\hat\h_1}-\nabla\psi_{\w',\hat\h_1'}\|_{L^2(\O)}\leq C\|\w+\hat\h_1-\w'-\hat\h_1'\|_{L^2(\O)},
$$
where the constant $C$ is linearly increases in $\|\w+\hat\h_1\|_{L^2(\O)}+\|\w'+\hat\h_1'\|_{L^2(\O)}$. It follows that
the map $(\w,\hat\h_1)\mapsto \nabla\psi_{\w,\hat\h_1}$ is locally Lipschitz in $L^2(\O,\Bbb R^3)$.
\end{proof}

For any $\bold w\in \mH^\Sigma_0(\O,\div0)$, $\hat\h_1\in\Bbb H_1(\O)$, we denote
$$
\Y_{\w,\hat\h_1}=\w+\hat\h_1+\nabla\psi_{\w,\hat\h_1},\q
\Z_{\w,\hat\h_1}= \mB(x,\w+\hat\h_1+\nabla\psi_{\w,\hat\h_1})-\h_2.
$$
We look for $\bold v\in \mH^\Sigma_0(\O,\div0)$  such that
\eq\label{MSnCN-f1v}
\left\{\aligned
&\curl \bv=\mP_\nu[\f(x,\mB(x,\Y_{\w,\hat\h_1}))]+\h_1\q &\text{in }\O,\\
&\div\bv=0\q&\text{in }\O,\\
&\nu\cdot\bv=0\q&\text{on }\p\O.
\endaligned\right.
\eeq

\begin{Lem}\label{Lem-MSnCN-f1v} Assume \eqref{cond-MSf} and \eqref{condMnC-J}. Let $\hat\h_1\in\Bbb H_1(\O)$.
\begin{itemize}
\item[(i)] For any $\w\in \mH^\Sigma_0(\O,\div0)$ and  \eqref{MSnCN-f1v} has a unique solution $\bv=\bold V_{\hat\h_1}[\w]\in H^1(\O,\Bbb R^3)\cap \mH^\Sigma_0(\O,\div0)$.
\eq\label{est-MSnCN-f1v}
\|\bold V_{\hat\h_1}[\w]\|_{H^1(\O)}\leq C\{\|\w+\hat\h_1\|_{L^2(\O)}+\|B^0_n\|_{H^{-1/2}(\p\O)}\}+C,
\eeq
where $C$ depends on $\O, \mH(x,\z)$ and $\f(x,\z)$. The map $\bold V_{\hat\h_1}$ is a compact map in $\mH^\Sigma_0(\O,\div0)$.
\item[(ii)] If in addition $\f(x,\z)$ satisfies
\eq\label{f0}
\lim_{|\z|\to\infty} {|\f(x,\z)|\over|\z|}=0\q\text{uniformly for $x\in\bar\O$},
\eeq
then $\bold V_{\hat\h_1}$ has a fixed point $\w_{\hat\h_1}$.
\end{itemize}
\end{Lem}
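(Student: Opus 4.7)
The plan is to treat (i) as a linear div-curl problem with a nonlinearly-specified right-hand side, and then obtain (ii) from Schauder's fixed point theorem after using \eqref{f0} to turn the right-hand side into a sublinear perturbation.

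For (i), set $\bold F=\mP_\nu[\f(x,\mB(x,\Y_{\w,\hat\h_1}))]+\h_1$. By Remark \ref{Lem-A.2}, $\mP_\nu$ takes values in $\mH^\Gamma(\O,\div0)$, and $\Bbb H_1(\O)\subset \mH^\Gamma(\O,\div0)$; so $\bold F\in \mH^\Gamma(\O,\div0)$. By \eqref{im-curl}, $\curl H^1_{n0}(\O,\div0)=\mH^\Gamma(\O,\div0)$, so there is $\bv_0\in H^1_{n0}(\O,\div0)$ with $\curl\bv_0=\bold F$. Using the last line of \eqref{dec-0}, decompose $\bv_0=\bv+\h^*_1$ with $\bv\in \mH^\Sigma_0(\O,\div0)$ and $\h^*_1\in\Bbb H_1(\O)$; since $\bv_0\in H^1$ and $\Bbb H_1(\O)\subset H^1$ by \eqref{H1H2}, also $\bv\in H^1$, and $\bv$ solves \eqref{MSnCN-f1v}. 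Uniqueness in $\mH^\Sigma_0(\O,\div0)$ follows from $\mH^\Sigma_0(\O,\div0)\cap \mH_0(\O,\curl0,\div0)=\mH^\Sigma_0(\O,\div0)\cap\Bbb H_1(\O)=\{\0\}$. The estimate \eqref{est-MSnCN-f1v} is obtained by the div-curl-gradient inequality $\|\bv\|_{H^1(\O)}\leq C(\O)\|\curl\bv\|_{L^2(\O)}$ followed by the chain: continuity of $\mP_\nu$, then $(f_1)$ gives $\|\f(x,\mB(x,\Y))\|_{L^2}\leq K_0\|\mB(x,\Y)\|_{L^2}+\|f_0\|_{L^2}$, then $(B_1)$ gives $\|\mB(x,\Y_{\w,\hat\h_1})\|_{L^2}\leq c_4\|\Y_{\w,\hat\h_1}\|_{L^2}+\|g_4\|_{L^2}$, and finally \eqref{H1est-MSnCN-f1B} bounds $\|\nabla\psi_{\w,\hat\h_1}\|_{L^2}$ linearly in $\|\w+\hat\h_1\|_{L^2}+\|B^0_n\|_{H^{-1/2}(\p\O)}+1$.

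For compactness of $\bold V_{\hat\h_1}$ on $\mH^\Sigma_0(\O,\div0)$ (with the $L^2$ topology), I would factor the map through the following continuous maps: $\w\mapsto \Y_{\w,\hat\h_1}$, locally Lipschitz from $L^2$ to $L^2$ by Proposition \ref{Prop-ex-MnC-BY}; then $\Y\mapsto \mB(\cdot,\Y)$, Lipschitz on bounded sets by $(B_2)$ and Remark \ref{Lem-mono-B}; then $\bold G\mapsto \f(\cdot,\bold G)$, Lipschitz on bounded sets by $(f_1)$; then the linear continuous map $\bold F\mapsto \mP_\nu[\bold F]+\h_1$; and finally the linear solution operator of \eqref{MSnCN-f1v}, which, by the argument for (i) and the div-curl-gradient inequality, is continuous from $L^2$-data into $\bv\in H^1(\O,\Bbb R^3)$. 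Thus $\bold V_{\hat\h_1}$ is continuous from $L^2$ into $H^1$, and Rellich's embedding $H^1(\O,\Bbb R^3)\hookrightarrow L^2(\O,\Bbb R^3)$ makes the composition compact as a self-map of $\mH^\Sigma_0(\O,\div0)$.

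For (ii), the assumption \eqref{f0} upgrades the bound in $(f_1)$ to $|\f(x,\z)|\leq \var|\z|+M_\var$ for every $\var>0$, with $M_\var$ independent of $x,\z$. Rerunning the chain of estimates from (i) with this stronger bound yields
$$\|\bold V_{\hat\h_1}[\w]\|_{L^2(\O)}\leq \|\bv\|_{H^1(\O)}\leq A\var\,\|\w\|_{L^2(\O)}+B_\var,$$
where $A$ depends on $\O,\mH,\mB$ but not on $\var$ or $\w$, and $B_\var$ depends on $\var,\hat\h_1,\h_1,B^0_n$ but not on $\w$. Choose $\var$ with $A\var\leq 1/2$ and then set $R=2B_\var$. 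Then the closed ball $\bar B_R\subset \mH^\Sigma_0(\O,\div0)$ is convex, closed, bounded, and invariant under $\bold V_{\hat\h_1}$; together with the continuity and compactness established in (i), Schauder's fixed point theorem produces the desired $\w_{\hat\h_1}\in\bar B_R$ with $\bold V_{\hat\h_1}[\w_{\hat\h_1}]=\w_{\hat\h_1}$. The main obstacle is the bookkeeping for the compactness factorization---in particular, verifying that the locally Lipschitz dependence $\w\mapsto\nabla\psi_{\w,\hat\h_1}$ from Proposition \ref{Prop-ex-MnC-BY} suffices to carry continuity through $\mB$ and $\f$, and that the small constant from \eqref{f0} appears as a multiplicative factor of the whole $\w$-dependence in the final estimate (not masked by the additive $\|g_4\|_{L^2}$-type terms).
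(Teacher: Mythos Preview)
Your proof is correct and follows essentially the same approach as the paper: show the right-hand side lies in $\mH^\Gamma(\O,\div0)$, solve the linear div-curl system uniquely in $H^1_{n0}(\O,\div0)\cap\Bbb H_1(\O)^\perp_{L^2(\O)}=H^1(\O,\Bbb R^3)\cap\mH^\Sigma_0(\O,\div0)$, estimate via the chain $\mP_\nu\to(f_1)\to(B_1)\to$\eqref{H1est-MSnCN-f1B}, obtain compactness from Rellich, and then use \eqref{f0} together with Schauder's fixed point theorem for (ii). Your exposition is more detailed than the paper's (in particular the explicit $\var$--$M_\var$ splitting from \eqref{f0} and the factorization establishing continuity of $\bold V_{\hat\h_1}$), but the underlying argument is the same.
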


\begin{proof} By the definition of $\mP_\nu$,  we know that the right side of the first equation of \eqref{MSnCN-f1v} lies in $\mH^\Gamma(\O,\div0)$. Hence \eqref{MSnCN-f1v} has a unique solution $\bv\in H^1_{n0}(\O,\div0)\cap \Bbb H_1(\O)^\perp_{L^2(\O)}= H_1(\O,\Bbb R^3)\cap \mH^\Sigma_0(\O,\div0)$. We have
$$\aligned
\|\bv\|_{H^1(\O)}\leq& C(\O)\|\mP_\nu[\f(x,\mB(x,\Y_{\w,\hat\h_1}))]+\h_1\|_{L^2(\O)}.
\endaligned
$$
Using $(B_1)$, \eqref{f0}, \eqref{H1est-MSnCN-f1B}, and Lemma \ref{Lem-A.4}, we get \eqref{est-MSnCN-f1v}.
Hence $\bold V_{\hat\h_1}$ is continuous and bounded from $\mH^\Sigma_0(\O,\div0)$ into $H^1(\O,\Bbb R^3)\cap\mH^\Sigma_0(\O,\div0)$. Then by the Sobolev imbedding theorem we know that $\bold V_{\hat\h_1}$ is compact from $\mH^\Sigma_0(\O,\div0)$ into itself.

If $\f(x,\z)$ satisfies \eqref{f0}, then there exists $R>0$ such that $\bold V_{\hat\h_1}$ maps the closed ball $B_R$ with center $\0$ and  radius $R$ in $\mH^\Sigma_0(\O,\div0)$ into $B_R$ itself. By Schauder's fixed point theorem we conclude that $\bold V_{\hat\h_1}$ has a fixed point in $B_R$.
\end{proof}

\begin{Thm}\label{Thm-MANaN-f1} Assume \eqref{cond-MSf}, \eqref{condMnC-J} and \eqref{f0}. Then for any $\hat\h_1\in\Bbb H_1(\O)$ there exists $\h_2=\h_2(\h_1,\hat\h_1)$ such that  \eqref{MSnCN-f1} with $\h_2=\h_2(\h_1,\hat\h_1)$ has a weak solution $(\u,p)\in H^1(\O,\Bbb R^3)\times H^1(\O)$.
\end{Thm}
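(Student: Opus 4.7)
The plan is to assemble the pieces already prepared in Lemmas \ref{Lem-MSnCN-f1Y}, \ref{Lem-MSnCN-f1v} and Proposition \ref{Prop-ex-MnC-BY} by first solving the $Y$-system \eqref{MSnCN-f1Y}, then translating back through the $Z$-system \eqref{MSnCN-f1Z} to obtain $(\u,p)$, absorbing any mismatch in $\mathbb H_2(\O)$ along the way.

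First, fix $\hat\h_1\in\Bbb H_1(\O)$ and choose any reference element, say $\h_2^*=\0\in\Bbb H_2(\O)$, with which to run the auxiliary constructions; the actual $\h_2(\h_1,\hat\h_1)$ in the conclusion will emerge at the end as a correction of $\h_2^*$. By Lemma \ref{Lem-MSnCN-f1v}(ii), the map $\bold V_{\hat\h_1}\colon \mH^\Sigma_0(\O,\div0)\to \mH^\Sigma_0(\O,\div0)$ is compact, and the sublinear growth assumption \eqref{f0} combined with the a priori estimate \eqref{est-MSnCN-f1v} confines it to a large closed ball; Schauder's theorem then yields a fixed point $\w_{\hat\h_1}\in \mH^\Sigma_0(\O,\div0)$.

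Next, using Proposition \ref{Prop-ex-MnC-BY}, set $\psi_*=\psi_{\w_{\hat\h_1},\hat\h_1}\in \dot H^1(\O)$ and
$$
\Y=\w_{\hat\h_1}+\hat\h_1+\nabla\psi_*.
$$
I will verify that $\Y$ is a weak solution of \eqref{MSnCN-f1Y}: the second and third equations follow immediately because $\psi_*$ solves \eqref{MSnCN-f1B}; the first equation holds because $\curl\Y=\curl\w_{\hat\h_1}$ (since $\hat\h_1$ is curl-free and $\nabla\psi_*$ is a gradient), and the fixed point property $\w_{\hat\h_1}=\bold V_{\hat\h_1}[\w_{\hat\h_1}]$ is precisely the identity $\curl\w_{\hat\h_1}=\mP_\nu[\f(x,\mB(x,\Y))]+\h_1$. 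Then Lemma \ref{Lem-MSnCN-f1Y}(iii), applied backwards, produces a weak solution $\Z=\mB(x,\Y)-\h_2^*\in\mH(\O,\div0)$ of the $Z$-system \eqref{MSnCN-f1Z}.

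Finally, invoking Lemma \ref{Lem-MSnCN-f1Y}(ii), there exists $\h_2^0\in\Bbb H_2(\O)$ (depending on $\h_1$ and $\hat\h_1$ through $\Z$) and a pair $(\u,p)\in H^1_{n0}(\O,\div0)\times H^1(\O)$ that weakly solves \eqref{MSnCN-f1} with $\h_2$ replaced by $\h_2^*+\h_2^0$; one then sets $\h_2(\h_1,\hat\h_1):=\h_2^*+\h_2^0$ and reads off the conclusion. Most of the real work is already in Lemma \ref{Lem-MSnCN-f1v}: the delicate step is the compactness and a priori bound that allow Schauder's theorem to apply under the sublinear condition \eqref{f0}. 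By contrast, assembling the equivalences and making the $\Bbb H_2(\O)$-correction are routine once the fixed point is in hand; the main care needed in the write-up is bookkeeping of which $\h_2$ is used at each stage, because the equivalence Lemmas \ref{Lem-MSnCN-f1Y}(ii)--(iii) only recover the original $\h_2$ up to an element of $\Bbb H_2(\O)$, and it is precisely this freedom that is packaged into the output parameter $\h_2(\h_1,\hat\h_1)$.
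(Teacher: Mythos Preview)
Your proposal is correct and follows essentially the same route as the paper's proof: obtain the fixed point $\w_{\hat\h_1}$ from Lemma \ref{Lem-MSnCN-f1v}(ii), build $\Y$ via Proposition \ref{Prop-ex-MnC-BY}, pass to $\Z$ by Lemma \ref{Lem-MSnCN-f1Y}(iii), and then use Lemma \ref{Lem-MSnCN-f1Y}(ii) together with an $\Bbb H_2(\O)$-correction to produce $(\u,p)$. The only cosmetic difference is that the paper chooses $\h_2$ directly by the orthogonality condition \eqref{MSnCN-f1-orth}, whereas you start from $\h_2^*=\0$ and absorb the correction $\h_2^0$ supplied by Lemma \ref{Lem-MSnCN-f1Y}(ii); since the $Y$-system \eqref{MSnCN-f1Y} and the fixed-point construction are independent of $\h_2$, the two formulations are equivalent.
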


\begin{proof} Let $\w_{\hat\h_1}$ denote the fixed point of the map $\bold V_{\hat\h_1}$ in $\mH^\Sigma_0(\O,\div0)$, and let
$$\aligned
&\psi_{\hat\h_1}=\psi_{\w_{\hat\h_1},\hat\h_1},\q
\Y_{\hat\h_1}=\w_{\hat\h_1}+\hat\h_1+\nabla\psi_{\hat\h_1},\q
\Z_{\hat\h_1}= \mB(x,\w_{\hat\h_1}+\hat\h_1+\nabla\psi_{\hat\h_1})-\h_2.
\endaligned
$$
$\w_{\hat\h_1}$ satisfies
\eq\label{MScNN-f1wh1}
\left\{\aligned
&\curl \w_{\hat\h_1}=\mP_\nu[\f(x,\mB(x,\Y_{\hat\h_1}))]+\h_1\q &\text{in }\O,\\
&\div\w_{\hat\h_1}=0\q&\text{in }\O,\\
&\nu\cdot\w_{\hat\h_1}=0\q&\text{on }\p\O.
\endaligned\right.
\eeq
Then $\Y_{\hat\h_1}$ is a solution of \eqref{MSnCN-f1Y}. By Lemma \ref{Lem-MSnCN-f1Y} (iii), $\Z_{\hat\h_1}$ is a solution of \eqref{MSnCN-f1Z}.
Then there exists $\h_2=\h_2(\h_1,\hat\h_1)$ depending on $\h_1$ and $\hat\h_1$, such that $\Z_{\hat\h_1}\perp_{L^2(\O)}\Bbb H_2(\O)$, namely
\eq\label{MSnCN-f1-orth}
\int_\O\langle \mB(x,\w_{\hat\h_1}+\hat\h_1+\nabla\psi_{\hat\h_1})-\h_2(\h_1,\hat\h_1), \h\rangle dx=0,\q\forall \h\in \Bbb H_2(\O).
\eeq
Then $\Z_{\hat\h_1}\in \mH^\Gamma(\O,\div0)$.  By Lemma \ref{Lem-MSnCN-f1Y} (ii), there exists $(\u,p)\in H_{n0}^1(\O,\div0)\times H^1(\O)$ which solves \eqref{MSnCN-f1} for the given $\h_1$.
\end{proof}

\subsection{The natural-Neumann BVP}\

In this section we consider the natural BVP of the Maxwell-Stokes system
\eq\label{MSNaN-f1}
\left\{\aligned
&\curl [\mH(x,\curl\u+\h_2)]=\f(x,\curl\u+\h_2)+\h_1(x)+\nabla p\q &\text{in }\O,\\
&\div\u=0\q &\text{in }\O,\\
&\nu\times\mH(x,\curl\u+\h_2)=\nu\times\H^0,\q {\p p\over\p\nu}=\nu\cdot\curl\H^0_T-\nu\cdot\f(x,\curl\u+\h_2)\q& \text{on }\p\O,
\endaligned\right.
\eeq
 We assume \eqref{cond-MSf} and
\eq\label{cond-MSNaN-f1}
\nu\times\H^0\in H^{-1/2}(\p\O,\Bbb R^3),\q\nu\cdot\curl\H^0_T\in H^{-1/2}(\p\O),
\eeq
where $\H^0_T=(\nu\times\H^0)\times\nu$.

\begin{Def}\label{Def-MSNaN-f1} We say that $(\u,p)$ is a weak solution of \eqref{MSNaN-f1} if $\u\in \mH(\O,\curl,\div0)$, $p\in H^1(\O)$, such that
\begin{itemize}
\item[(i)] $\mH(x,\curl\u(x)+\h_2(x))\in  L^{2,-1/2}_t(\O,\Bbb R^3)$,
$\f(x,\curl\u(x)+\h_2(x))\in L^2(\O,\Bbb R^3)$;
\item[(ii)]

\eq\label{wk-MSNaN-f1}
\aligned
&\int_\O\langle\mH(x,\curl\u+\h_2),\curl\bv\rangle dx+\langle \nu\times\H^0,\bv\rangle_{\p\O,1/2}\\
=&\int_\O\langle \f(x,\curl\u+\h_2)+\h_1+\nabla p,\bv\rangle dx,\q \forall \bv\in H^1(\O,\Bbb R^3),
\endaligned
\eeq
\eq\label{wk-MSNaN-f1p}
\int_\O\langle \f(x,\curl\u+\h_2)+\nabla p,\nabla\eta\rangle dx=\langle\nu\cdot\curl\H^0_T,\eta\rangle_{\p\O, 1/2},\q\forall \eta\in H^1(\O).
\eeq
\end{itemize}
\end{Def}

Regarding \eqref{MSNaN-f1} we have the following  $Z$-system
\eq\label{MSNaN-f1Z}
\left\{\aligned
&\curl [\mH(x,\Z+\h_2)]=\mP_n[\f(x,\Z+\h_2)]+\h_1\q &\text{in }\O,\\
&\div\Z=0\q &\text{in }\O,\\
&\nu\times\mH(x,\Z+\h_2)=\nu\times\H^0\q& \text{on }\p\O,
\endaligned\right.
\eeq
and $Y$-system
\eq\label{MSNaN-f1Y}
\left\{\aligned
&\curl\Y=\mP_n[\f(x,\mB(x,\Y))]+\h_1\q &\text{in }\O,\\
&\div\mB(x,\Y)=0\q &\text{in }\O,\\
&\nu\times\Y=\nu\times\H^0\q& \text{on }\p\O.
\endaligned\right.
\eeq
where $\mP_n$ is the Neumann projection associated with $\nu\cdot\curl\H^0_T$, see section \ref{SecA}.

\begin{Def}\label{Def-MSNaN-f1Z}{\rm (i)}
We say $\Z$ is a weak solution of \eqref{MSNaN-f1Z} if $\Z\in \mH(\O,\div0)$ such that
$$
\mH(x,\Z(x)+\h_2(x))\in L^{2,-1/2}_t(\O,\Bbb R^3),\q \f(x,\Z(x)+\h_2(x))\in L^{2,-1/2}_\nu(\O,\Bbb R^3),
$$
and for all $\bv\in H^1(\O,\Bbb R^3)$ it holds that
$$
\int_\O\langle \mH(z,\Z+\h_2), \curl\bv\rangle dx+\langle \nu\times\H^0,\bv\rangle_{\p\O, 1/2}
=\int_\O \langle \mP_n[\f(x,\Z+\h_2)]+\h_1,\bv\rangle dx.
$$

{\rm (ii)} We say $\Y$ is a weak solution of \eqref{MSNaN-f1Y} if $\Y\in L^{2,-1/2}_t(\O,\Bbb R^3)$ such that
$$
\mB(x,\Y(x))\in \mH(\O,\div0),\q \f(x,\mB(x,\Y(x)))\in L^{2,-1/2}_\nu(\O,\Bbb R^3),
$$
and for all $\bv\in H^1(\O,\Bbb R^3)$ it holds that
$$\aligned
\int_\O\langle \Y, \curl\bv\rangle dx+\langle \nu\times\H^0,\bv\rangle_{\p\O, 1/2}
=\int_\O \langle \mP_n[\f(x,\mB(x,\Y))]+\h_1,\bv\rangle dx.
\endaligned
$$
\end{Def}

Similar to Lemma \ref{Lem-MSnCN-f1Y} we have the following

\begin{Lem}\label{Lem-MSNaN-f1Y}  Assume \eqref{cond-MSf} and \eqref{cond-MSNaN-f1}.
 \begin{itemize}
 \item[(i)] If $\Y\in L^{2,-1/2}_t(\O,\Bbb R^3)$ is a weak solution of \eqref{MSNaN-f1Y}, then $\Y\in \mH(\O,\curl)$, the first equality in \eqref{MSNaN-f1Y} holds a.e. in $\O$, and the equality $\nu\times\Y=\nu\times\H^0$ holds in the sense of trace in $H^{-1/2}(\p\O,\Bbb R^3)$.
\item[(ii)] If $(\u,p)\in \mH(\O,\curl,\div0)\times H^1(\O)$ is a weak solution of \eqref{MSNaN-f1} and letting $\Z=\curl\u$, then $\Z\in \mH(\O,\div0)$ and it is a weak solution of \eqref{MSNaN-f1Z}.
On the other hand, if $\Z\in \mH(\O,\div0)$ is a weak solution of \eqref{MSNaN-f1Z}, then there exists $\h_2^0\in \Bbb H_2(\O)$ such that \eqref{MSNaN-f1} with $\h_2$ replaced by $\h_2+\h_2^0$ has a weak solution $(\u,p)\in H^1_{n0}(\O,\div0)\times H^1(\O)$, and $\curl\u=\Z-\h_2^0$. If furthermore $\Z\perp_{L^2(\O)}\Bbb H_2(\O)$, then $\h^0_2=\0$.

\item[(iii)] If $\Z\in \mH(\O,\div0)$ is a weak solution of \eqref{MSNaN-f1Z} and letting
$\Y=\mH(x,\Z(x)+\h_2(x)),$
then $\Y\in \mH(\O,\curl)$ and it is a weak solution of \eqref{MSNaN-f1Y}.
On the other hand, if $\Y\in L^{2,-1/2}_t(\O,\Bbb R^3)$ is a weak solution of \eqref{MSNaN-f1Y} and letting
$\Z=\mB(x,\Y(x))-\h_2(x),$
then $\Z\in \mH(\O,\div0)$ and it is a weak solution of \eqref{MSNaN-f1Z}.
\end{itemize}
\end{Lem}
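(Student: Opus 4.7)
The plan is to follow the template established in the proof of Lemma \ref{Lem-MSnCN-f1Y}, noting that the present lemma differs from it in two essential ways: the projection $\mP_\nu$ is replaced by the Neumann projection $\mP_n$ associated with the inhomogeneous data $\nu\cdot\curl\H^0_T$, and the weak formulation now carries a boundary pairing $\langle\nu\times\H^0,\bv\rangle_{\p\O,1/2}$. Accordingly, the previous proof should go through after I verify that $\mP_n$ has the same $L^2$-continuity (Lemma \ref{Lem-A.4}) that $\mP_\nu$ had.

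For part (i), I would first restrict the test field $\bv\in H^1(\O,\Bbb R^3)$ in the weak formulation to $\w\in\mathcal D(\O,\Bbb R^3)$; this kills the boundary pairing and leaves $\int_\O\langle\Y,\curl\w\rangle dx=\int_\O\langle\mP_n[\f(x,\mB(x,\Y))]+\h_1,\w\rangle dx$. Bounding the right-hand side in $L^2$ via $(f_1)$, $(B_1)$ and the continuity of $\mP_n$, I conclude $\curl\Y\in L^2(\O,\Bbb R^3)$ and that the first equation of \eqref{MSNaN-f1Y} holds a.e. in $\O$, hence $\Y\in\mH(\O,\curl)$. With this upgrade in hand, the tangential trace $\nu\times\Y$ lies in $H^{-1/2}(\p\O,\Bbb R^3)$, and the standard integration-by-parts identity against $\bv\in H^1(\O,\Bbb R^3)$, combined with the pointwise identity just derived, collapses the weak formulation to $\langle\nu\times\Y-\nu\times\H^0,\bv\rangle_{\p\O,1/2}=0$ for every such $\bv$. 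Surjectivity of the tangential trace $H^1(\O,\Bbb R^3)\to T\!H^{1/2}(\p\O,\Bbb R^3)$ then forces $\nu\times\Y=\nu\times\H^0$ in $H^{-1/2}(\p\O,\Bbb R^3)$.

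For part (ii), the forward implication relies on recognizing that \eqref{wk-MSNaN-f1p} is exactly the weak formulation of the Neumann problem \eqref{Neumann-curl} with $\w=\f(x,\curl\u+\h_2)$; hence $\f(x,\curl\u+\h_2)+\nabla p=\mP_n[\f(x,\curl\u+\h_2)]$, and substituting this into \eqref{wk-MSNaN-f1} yields the weak formulation of the $Z$-system with $\Z=\curl\u$. For the reverse direction, decompose $\Z=\Z_1+\h_2^0$ via the second line of \eqref{dec-0} with $\Z_1\in\mH^\Gamma(\O,\div0)$ and $\h_2^0\in\Bbb H_2(\O)$; using \eqref{im-curl} choose $\u\in H^1_{n0}(\O,\div0)$ with $\curl\u=\Z_1$, and define $p\in\dot H^1(\O)$ as the solution of \eqref{Neumann-curl} with $\w=\f(x,\curl\u+\h_2+\h_2^0)$; one then checks that $(\u,p)$ solves \eqref{MSNaN-f1} with $\h_2$ replaced by $\h_2+\h_2^0$, and $\h_2^0=\0$ whenever $\Z\perp_{L^2(\O)}\Bbb H_2(\O)$. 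Part (iii) is essentially algebraic: $(H_3)$ provides the pointwise bijection between $\Y$ and $\Z+\h_2$, and once (i) has promoted $\Y$ into $\mH(\O,\curl)$ (resp.\ $\Z$ into $\mH(\O,\div0)$) so that each term makes sense, the substitution converts one system into the other.

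The main obstacle is the boundary identification in (i): starting only from $\Y\in L^{2,-1/2}_t(\O,\Bbb R^3)$, the tangential trace is a priori an element of $H^{-1/2}(\p\O,\Bbb R^3)$ with no canonical compatibility with $\nu\times\H^0$, so the distributional argument that lifts $\Y$ into $\mH(\O,\curl)$ is the crucial step that activates the standard trace theory. Once that upgrade is achieved, the identification $\nu\times\Y=\nu\times\H^0$ via integration by parts, and the derivations in (ii) and (iii), are essentially bookkeeping.
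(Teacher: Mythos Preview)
Your proposal is correct and matches the paper's approach: the paper simply states that the lemma is proved ``similar to Lemma \ref{Lem-MSnCN-f1Y}'' without further detail, and your write-up is precisely the adaptation of that template to the present setting, correctly isolating the two new ingredients (the projection $\mP_n$ in place of $\mP_\nu$, and the boundary pairing $\langle\nu\times\H^0,\bv\rangle_{\p\O,1/2}$) and handling the additional boundary identification $\nu\times\Y=\nu\times\H^0$ via the upgrade $\Y\in\mH(\O,\curl)$ followed by integration by parts.
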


\begin{Thm}\label{Thm-MANaN-f1} Assume \eqref{cond-MSf}, \eqref{cond-MSNaN-f1} and \eqref{f0}. Then for any given $\h_2\in\Bbb H_2(\O)$, there exists $\h_1\in\Bbb H_1(\O)$ which depends on $\h_2$ such that \eqref{MSNaN-f1} has a weak solution $(\u,p)\in H^1(\O,\Bbb R^3)\times H^1(\O)$.
\end{Thm}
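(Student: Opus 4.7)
My approach is to reduce \eqref{MSNaN-f1} to a fixed-point problem for $\u$ and apply Schauder's theorem at each iteration, using Theorem \ref{Thm-MNa-Mono} as the inner solver for the linear-current natural BVP. First, I extract a necessary compatibility condition: testing the first equation of \eqref{MSNaN-f1} against an arbitrary $\h\in\Bbb H_1(\O)$, and using $\curl\h=\0$, $\nu\cdot\h=0$, and $\int_\O\nabla p\cdot\h\,dx=0$, one gets
$$\int_\O\h_1\cdot\h\,dx=\langle\nu\times\H^0,\h\rangle_{\p\O,1/2}-\int_\O\f(x,\curl\u+\h_2)\cdot\h\,dx\q\forall\,\h\in\Bbb H_1(\O).$$
Hence $\h_1$ is forced by $\u$. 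Fix an orthonormal basis $\{\e_1,\ldots,\e_N\}$ of $\Bbb H_1(\O)$, let $X:=H^1_{n0}(\O,\div0)\cap\Bbb H_1(\O)^\perp_{L^2(\O)}$, and for $\u_0\in X$ define
$$\h_1[\u_0]:=\sum_{i=1}^N\Bigl(\langle\nu\times\H^0,\e_i\rangle_{\p\O,1/2}-\int_\O\f(x,\curl\u_0+\h_2)\cdot\e_i\,dx\Bigr)\e_i,\q\J[\u_0]:=\mP_n[\f(x,\curl\u_0+\h_2)].$$

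By Lemma \ref{Lem-A.4}, $\J[\u_0]\in\mH^\Gamma(\O,\div0)$, and the definition of $\mP_n$ in Section \ref{SecA} forces $\nu\cdot\J[\u_0]=\nu\cdot\curl\H^0_T$, giving the first identity of \eqref{condMNa-J} automatically. Writing $\mP_n[\f]=\f+\nabla\psi_\f$ and using $\grad H^1(\O)\perp\Bbb H_1(\O)$, the definition of $\h_1[\u_0]$ produces the second identity of \eqref{condMNa-J}. Therefore Theorem \ref{Thm-MNa-Mono} yields a unique weak solution $\u=T[\u_0]\in X$ of the linear-current natural BVP with $\h_1=\h_1[\u_0]$, $\J=\J[\u_0]$, together with the a priori estimate $\|T[\u_0]\|_{H^1(\O)}\leq C(\|\J[\u_0]\|_{L^2(\O)}+\|\h_1[\u_0]\|_{L^2(\O)})+C'$.

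Next, I apply Schauder to $T$. Topologize $X$ with $L^2$ and take $K:=\overline{B^{H^1}_R}\cap X$; by $H^1$-lower semicontinuity this set is closed and convex in $L^2(\O,\Bbb R^3)$. Continuity of $\f$ (from $(f_1)$), of $\mP_n$ (Lemma \ref{Lem-A.4}), and of the Maxwell solver (Theorem \ref{Thm-MNa-Mono}) make $T$ continuous, and $T(K)$ lies in a bounded subset of $H^1$, hence is precompact in $L^2$ by Rellich. The growth hypothesis \eqref{f0} gives $|\f(x,\z)|\leq\vap|\z|+C_\vap$ for arbitrary $\vap>0$, which together with the a priori estimate above produces $\|T[\u_0]\|_{H^1(\O)}\leq C_1\vap\|\u_0\|_{H^1(\O)}+C_2$; choosing $\vap$ small and $R$ large gives $T(K)\subset K$. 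Schauder's theorem produces $\u\in K$ with $T[\u]=\u$; set $\h_1:=\h_1[\u]\in\Bbb H_1(\O)$. Letting $p$ be the Neumann potential from the decomposition $\mP_n[\f(x,\curl\u+\h_2)]=\f+\nabla p$, the fixed-point equation reads $\curl\mH(x,\curl\u+\h_2)=\f+\h_1+\nabla p$, and $p$ satisfies $\p_\nu p=\nu\cdot\curl\H^0_T-\nu\cdot\f$ by the construction of $\mP_n$; combined with the natural boundary condition $\nu\times\mH=\nu\times\H^0$ inherited from Theorem \ref{Thm-MNa-Mono}, $(\u,p)$ is the desired weak solution. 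The main obstacle is recognizing that the compatibility \eqref{condMNa-J} needed to invoke Theorem \ref{Thm-MNa-Mono} forces the specific definition of $\h_1[\u_0]$ above; once this is in place, part (i) of \eqref{condMNa-J} follows from the structure of $\mP_n$, and the invariant-ball estimate depends critically on the sublinear growth \eqref{f0}.
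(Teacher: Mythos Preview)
Your reduction strategy is natural, and using Theorem \ref{Thm-MNa-Mono} as a black-box solver for the frozen-current natural BVP is appealing. However, there is a genuine gap in the Schauder argument. Your map $T$ depends on $\u_0$ only through $\curl\u_0$, since both $\J[\u_0]=\mP_n[\f(x,\curl\u_0+\h_2)]$ and $\h_1[\u_0]$ are built from $\f(x,\curl\u_0+\h_2)$. When you topologize $K=\overline{B^{H^1}_R}\cap X$ with $L^2$, a sequence $\u_n\to\u$ in $L^2$ with $\u_n\in K$ gives only $\curl\u_n\rightharpoonup\curl\u$ \emph{weakly} in $L^2$, not strongly; and weak $L^2$ convergence does not pass through the nonlinear Nemytskii operator $\z\mapsto\f(x,\z)$. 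Hence $T$ is not continuous on $(K,L^2)$, and Schauder does not apply. Switching to the $H^1$ topology on $K$ restores continuity of $T$, but then $T(K)$ is merely bounded in $H^1$, not precompact, so Schauder still fails. The difficulty is structural: your iteration loses one derivative (through $\curl$) and regains exactly one (through the solver of Theorem \ref{Thm-MNa-Mono}), leaving no room for compactness.

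The paper circumvents this by not iterating on $\u$. It passes to the $Y$-system \eqref{MSNaN-f1Y} for $\Y=\mH(x,\curl\u+\h_2)$, decomposes $\Y=\w+\hat\h_2+\nabla\psi$ with $\w\in\mH^\Gamma(\O,\div0)$, and runs the fixed point on $\w$ in the $L^2$ topology. The nonlinearity $\f$ is then evaluated at $\mB(x,\w+\hat\h_2+\nabla\psi_{\w,\hat\h_2})$, which depends on $\w$ \emph{without differentiation}: the map $\w\mapsto\nabla\psi_{\w,\hat\h_2}$ is locally Lipschitz in $L^2$ by strong monotonicity of the scalar Dirichlet problem \eqref{MSNaN-f1B}, so the full chain $\w\mapsto\mP_n[\f(x,\mB(x,\Y_{\w,\hat\h_2}))]$ is $L^2$-continuous. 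The outer solver \eqref{MSNaN-f1v} is a \emph{linear} div-curl system returning $\bold V_{\hat\h_2}[\w]\in H^1$, so $\bold V_{\hat\h_2}:L^2\to H^1$ and compactness in $L^2$ follows from Rellich. This separation of the two nonlinearities --- absorbing $\mH$ into the monotone scalar equation for $\psi$, and feeding $\f$ into a linear first-order system for $\w$ --- is exactly what recovers the derivative lost when $\f$ acts on $\curl\u$. Your argument would work verbatim if $\f$ depended on $\u$ rather than on $\curl\u$; as stated, the fixed-point variable must be changed.
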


\begin{proof} {\it Step 1}. To solve \eqref{MSNaN-f1Y}, we use \eqref{dec-2} to write
$\Y=\w+\hat\h_2+\nabla\psi$, with $\w\in \mH^\Gamma(\O,\div0)$, $\hat\h_2\in\Bbb H_2(\O)$ and $\psi\in H^1_0(\O)$.
If $\Y$ satisfies \eqref{MSNaN-f1Y} then $\psi$ satisfies
\eq\label{MSNaN-f1B}
\div\mB(x,\w+\hat\h_2+\nabla\psi)=0\q \text{in }\O,\q
\psi=0\q\text{on }\p\O.
\eeq
As in the proof of Proposition \ref{Prop-ex-MnC-BY} (also see Proposition \ref{Prop-sol} (i)), we can show that the map $\mT: H^1_0(\O)\to H^{-1}(\O)$ defined by
$$
\langle T[\psi],\eta\rangle_{H^{-1}(\O), H^1_0(\O)}=\int_\O \langle\mB(x,\w(x)+\hat\h_1(x)+\nabla\psi), \nabla\eta\rangle dx,\q\forall \eta\in H^1_0(\O).
$$
is hemi-continuous and strongly monotone. Then there exists a unique $\psi=\psi_{\w,\hat\h_2}\in H^1_0(\O)$ such that $\mT[\psi_{\w,\hat\h_2}]=0$.
The map $(\w,\hat\h_2)\mapsto \nabla\psi_{\w,\hat\h_2}$ is a locally Lipschitz and bounded map from $\mH^\Gamma(\O,\div0)\times\Bbb H_2(\O)$ into $\text{grad} H^1_0(\O)$.

{\it Step 2}. Denote
$$
\Y_{\w,\hat\h_2}=\w+\hat\h_2+\nabla\psi_{\w,\hat\h_2},\q
\Z_{\w,\hat\h_2}= \mB(x,\w+\hat\h_2+\nabla\psi_{\w,\hat\h_2})-\h_2.
$$
We look for $\bold v\in \mH^\Gamma(\O,\div0)$  such that
\eq\label{MSNaN-f1v}
\left\{\aligned
&\curl \bv=\mP_n[\f(x,\mB(x,\Y_{\w,\hat\h_2}))]+\h_1\q &\text{in }\O,\\
&\div\bv=0\q&\text{in }\O,\\
&\nu\times\bv=\nu\times\H^0\q&\text{on }\p\O.
\endaligned\right.
\eeq
We shall show that, there exists a unique $\h_1=\h_{1,\w,\hat\h_2}\in \Bbb H_1(\O)$ such that \eqref{MSNaN-f1v} has a unique solution $\bv=\bold V_{\hat\h_2}[\w]\in H^1(\O,\Bbb R^3)\cap \mH^\Gamma(\O,\div0)$, and
\eq\label{est-MSNaN}
\|\bold V_{\hat\h_2}[\w]\|_{H^1(\O)}\leq C\{\|\w+\hat\h_2\|_{L^2(\O)}+\|\nu\times\H^0\|_{H^{1/2}(\p\O)}\}+C,
\eeq
where $C$ depends on $\O, \mH(x,\z)$ and $\f(x,\z)$.

To prove,  we take a tangential component preserving extension of $\H^0_T$, which is also denoted by $\H^0$, see \cite{P2}. Then $\H^0\in H^1(\O,\div0)$. We can choose $\H^0$ such that
$$
\|\H^0\|_{H^1(\O)}\leq C(\O)\|\H^0_T\|_{H^{1/2}(\p\O)}\leq C(\O)\|\nu\times\H^0\|_{H^{1/2}(\p\O)}.
$$
Let $\z=\bv-\H^0$.
Then \eqref{MSNaN-f1v} is reduced to
\eq\label{MSNaN-f1vz}
\left\{\aligned
&\curl \z=\mP_n[\f(x,\mB(x,\Y_{\w,\hat\h_2}))]-\curl\H^0+\h_1\q &\text{in }\O,\\
&\div\z=0\q&\text{in }\O,\\
&\nu\times\z=\0\q&\text{on }\p\O.
\endaligned\right.
\eeq
Let us denote by $\bold g$  the right side of  the first equation in \eqref{MSNaN-f1vz}. Then \eqref{MSNaN-f1vz} is solvable in $H^1_{t0}(\O,\div0)$ if and only if $\bold g\in \mH^\Sigma_0(\O,\div0)$.

By the definition of $\mP_n$ we easily see that  $\bold g\in \mH_0(\O,\div0)$.
On the other hand, there exists a unique $\h_1=\h_{1,\w,\hat\h_2}$ such that $\bold g\perp \Bbb H_1(\O)$, namely
$$
\int_\O\langle \mP_n[\f(x,\mB(x,\Y_{\w,\hat\h_2}))]-\curl\H^0+\h_{1,\w,\hat\h_2},\h\rangle dx=0,\q\forall \h\in \Bbb H_1(\O).
$$
Recalling that $\mH_0(\O,\div0)=\mH^\Sigma_0(\O,\div0)\oplus_{L^2(\O)}\Bbb H_1(\O),$
we see that for this choice of $\h_1$ we have $\bold g\in \mH^\Sigma_0(\O,\div0)$. Then \eqref{MSNaN-f1vz} with $\h_1=\h_{1,\w,\hat\h_2}$ has a unique solution $\z\in H^1(\O,\div0)\cap \Bbb H_2(\O)^\perp_{L^2(\O)}= H_1(\O,\Bbb R^3)\cap \mH^\Gamma(\O,\div0)$.

To estimate $\h_{1,\w,\hat\h_2}$, let $\{\e_1,\cdots,\e_N\}$ be a orthonormal basis of $\Bbb H_1(\O)$. Then
$$\aligned
&\h_{1,\w,\hat\h_2}=\sum_{j=1}^N c_j(\w,\hat\h_2)\e_j,\\
&c_j(\w,\hat\h_2)=- \int_\O\langle \mP_n[\f(x,\mB(x,\Y_{\w,\hat\h_2}))]-\curl\H^0,\e_j\rangle dx.
\endaligned
$$
Using the above equalities and \eqref{H1H2} we have
$$\aligned
\|\h_{1,\w,\hat\h_2}\|_{H^1(\O)}\leq & C(\O)\|\h_{1,\w,\hat\h_2}\|_{L^2(\O)}\\
\leq& C(\O)\|\mP_n[\f(x,\mB(x,\Y_{\w,\hat\h_2}))]-\curl\H^0\|_{L^2(\O)}\\
\leq &C\{\|\Y_{\w,\hat\h_2}\|_{L^2(\O)}+\|\nu\times\H^0\|_{H^{1/2}(\p\O)}\}+C\\
\leq &C\{\|\w+\hat\h_2\|_{L^2(\O)}+\|\nu\times\H^0\|_{H^{1/2}(\p\O)}\}+C,
\endaligned
$$
where $C$ depends on $\O$, $\mH(x,\z)$ and $\f(x,\z)$.

For the solution $\z$ of \eqref{MSNaN-f1vz}  obtained above we have
$$\aligned
\|\z\|_{H^1(\O)}\leq& C(\O)\|\mP_n[\f(x,\mB(x,\Y_{\w,\hat\h_2}))]-\curl\H^0+\h_{1,\w,\hat\h_2}\|_{L^2(\O)}\\
\leq &C\{\|\w+\hat\h_2\|_{L^2(\O)}+\|\nu\times\H^0\|_{H^{1/2}(\p\O)}\}+C.
\endaligned
$$
Hence \eqref{MSNaN-f1v} has a unique solution $\bv=\z+\H^0$ in $\mH^\Gamma(\O,\div0)$. We denote this solution by $\bv=\bold V_{\hat\h_2}[\w]$.
Then we have \eqref{est-MSNaN}.

\eqref{est-MSNaN} shows that the map $\w\mapsto \bold V_{\hat\h_2}[\w]$ is a bounded map from $\mH(\O,\div0)$ into $H^1(\O,\Bbb R^3)\cap \mH^\Gamma(\O,\div0)$. Using $(H_1), (H_2), (H_3)$ and $(f_1)$ we see that the maps
$$
\w\mapsto \Y_{\w,\hat\h_2}\mapsto \mB(x,\Y_{\w,\hat\h_2})\mapsto \f(x,\mB(x,\Y_{\w,\hat\h_2}))\mapsto \mP_n[\f(x,\mB(x,\Y_{\w,\hat\h_2}))]
$$
are continuous in the $L^2$ norm. Thus the map
$\w\mapsto \h_{1,\w,\hat\h_2}$
is continuous in the $L^2$ norm. By these facts and using the $L^2$-div-curl-gradient inequalities we see that the map
$\w\mapsto \bold V_{\hat\h_2}[\w]$
is continuous from $\mH^\Gamma(\O,\div0)\to H^1(\O,\Bbb R^3)\cap\mH^\Gamma(\O,\div0)$.
By the Sobolev imbedding theorem we know that $\bold V_{\hat\h_2}$ is compact from $\mH^\Gamma(\O,\div0)$ into itself.

By the assumption \eqref{f0}, then there exists $R>0$ such that $\bold V_{\hat\h_2}$ maps the closed ball $B_R$ with center $\0$ and  radius $R$ in $\mH^\Gamma(\O,\div0)$ into $B_R$ itself. By Schauder's fixed point theorem we conclude that $\bold V_{\hat\h_2}$ has a fixed point $\w_{\hat\h_2}$ in $B_R$.

{\it Step 3}.
Let
$$\aligned
&\psi_{\hat\h_2}=\psi_{\w_{\hat\h_2},\hat\h_2},\q \h_{1,\hat\h_2}=\h_{1,\w_{\hat\h_2},\hat\h_2},\\
&\Y_{\hat\h_2}=\w_{\hat\h_2}+\hat\h_2+\nabla\psi_{\hat\h_2},\q
\Z_{\hat\h_2}= \mB(x,\w_{\hat\h_2}+\hat\h_2+\nabla\psi_{\hat\h_2})-\h_2.
\endaligned
$$
Since $\w_{\hat\h_2}$ satisfies
\eq\label{MSNaN-f1wh2}
\left\{\aligned
&\curl \w_{\hat\h_2}=\mP_n[\f(x,\mB(x,\Y_{\hat\h_2}))]+\h_{1,\hat\h_2}\q &\text{in }\O,\\
&\div\w_{\hat\h_2}=0\q&\text{in }\O,\\
&\nu\times\w_{\hat\h_2}=\nu\times\H^0\q&\text{on }\p\O,
\endaligned\right.
\eeq
we see that $\Y_{\hat\h_2}$ is a solution of \eqref{MSNaN-f1Y}, and by Lemma \ref{Lem-MSNaN-f1Y} (iii) $\Z_{\hat\h_2}$ is a solution of \eqref{MSNaN-f1Z}.

Using the method in the proof of Proposition \ref{Prop-sol} (ii), we can show that, for the given $\h_2\in\Bbb H_2(\O)$, there exists $\hat\h_2\in \Bbb H_2(\O)$ such that
\eq\label{MSNaN-f1-orth}
\int_\O\langle \mB(x,\w_{\hat\h_2}+\hat\h_2+\nabla\psi_{\hat\h_2})-\h_2, \h\rangle dx=0,\q\forall \h\in \Bbb H_2(\O).
\eeq
Namely, there exists $\hat\h_2$ such that $\Z_{\hat\h_2}\perp_{L^2(\O)}\Bbb H_2(\O)$.
Then $\Z_{\hat\h_2}\in \mH^\Gamma(\O,\div0)$.  By Lemma \ref{Lem-MSNaN-f1Y} (ii) we know that there exists $(\u,p)\in H_{n0}^1(\O,\div0)\times H^1(\O)$ which solves \eqref{MSNaN-f1} for the given $\h_2$.
\end{proof}

\subsection{The Co-normal-Neumann BVP}\

In this section we consider the natural BVP of the Maxwell-Stokes system
\eq\label{MSCoN-f1}
\left\{\aligned
&\curl [\mH(x,\curl\u+\h_2(x))]=\f(x,\curl\u+\h_2)+\h_1(x)+\nabla p\q &\text{in }\O,\\
&\div\u=0\q &\text{in }\O,\\
&\nu\cdot\mH(x,\curl\u+\h_2(x))=H^0_n,\q {\p p\over\p\nu}=-\nu\cdot\f(x,\curl\u+\h_2(x))\q&\text{on }\p\O.
\endaligned\right.
\eeq
In the following we assume \eqref{cond-MSf} and
$H_n^0\in H^{-1/2}(\p\O).$

\begin{Def}\label{Def-MSCoN-f1} We say that $(\u,p)$ is a weak solution of \eqref{MSCoN-f1} if $\u\in \mH(\O,\curl,\div0)$ and $p\in H^1(\O)$ such that
\begin{itemize}
\item[(i)] $\mH(x,\curl\u(x)+\h_2(x))\in L^{2,-1/2}_\nu(\O,\Bbb R^3)$, $\f(x,\curl\u(x)+\h_2(x))\in L^{2,-1/2}_\nu(\O,\Bbb R^3)$;
\item[(ii)] the equality $\nu\cdot\mH(x,\curl\u(x)+\h_2(x))=H^0_n$ holds in the $H^{-1/2}(\p\O)$ sense;
\item[(iii)]
$$\aligned
&\int_\O\langle\mH(x,\curl\u+\h_2),\curl\bv\rangle dx\\
=&\int_\O\langle \f(x,\curl\u+\h_2)+\h_1+\nabla p,\bv\rangle dx, \q \forall \bv\in H^1_{t0}(\O,\Bbb R^3),
\endaligned
$$
$$
\int_\O\langle \f(x,\curl\u+\h_2)+\nabla p,\nabla\eta\rangle dx=0, \q \forall \eta\in H^1(\O).
$$
\end{itemize}
\end{Def}

We shall show the relation between \eqref{MSCoN-f1} with the $Z$-system
\eq\label{MSCoN-f1Z}
\left\{\aligned
&\curl [\mH(x,\Z+\h_2)]=\mP_\nu[\f(x,\Z+\h_2)]+\h_1\q &\text{in }\O,\\
&\div\Z=0\q &\text{in }\O,\\
&\nu\cdot\mH(x,\Z+\h_2)=H^0_n\q& \text{on }\p\O,
\endaligned\right.
\eeq
and we shall show the equivalence of $Z$-system \eqref{MSCoN-f1Z} and the $Y$-system
\eq\label{MSCoN-f1Y}
\left\{\aligned
&\curl\Y=\mP_\nu[\f(x,\mB(x,\Y))]+\h_1\q &\text{in }\O,\\
&\div\mB(x,\Y)=0\q &\text{in }\O,\\
&\nu\cdot\Y=H^0_n\q& \text{on }\p\O.
\endaligned\right.
\eeq

\begin{Def}\label{Def-MSCoN-f1Z} {\rm (i)}
We say $\Z$ is a weak solution of \eqref{MSCoN-f1Z} if $\Z\in \mH(\O,\div0)$ such that
$$\aligned
&\mH(x,\Z(x)+\h_2(x))\in L^{2,-1/2}_\nu(\O,\Bbb R^3),\q \f(x,\Z(x)+\h_2(x))\in L^2(\O,\Bbb R^3),\\
&\nu\cdot\mH(x,\Z(x)+\h_2(x))=H^0_n\q\text{in the sense of $H^{-1/2}(\p\O)$},\\
&\int_\O\langle \mH(z,\Z+\h_2), \curl\bv\rangle dx=\int_\O \langle \mP_\nu[\f(x,\Z+\h_2)]+\h_1,\bv\rangle dx,\q\forall \bv\in H^1_{t0}(\O,\Bbb R^3).
\endaligned
$$

{\rm (ii)} We say $\Y$ is a weak solution of \eqref{MSNaN-f1Y} if $\Y\in L^{2,-1/2}_\nu(\O,\Bbb R^3)$ such that
$$\aligned
&\mB(x,\Y(x))\in \mH(\O,\div0),\q \f(x,\mB(x,\Y(x)))\in L^2(\O,\Bbb R^3),\\
&\nu\cdot\Y=H^0_n\q\text{holds in the sense of $H^{-1/2}(\p\O)$,}\\
&\int_\O\langle \Y, \curl\bv\rangle dx
=\int_\O \langle \mP_\nu[\f(x,\mB(x,\Y))]+\h_1,\bv\rangle dx,\q\forall \bv\in H^1_{t0}(\O,\Bbb R^3).
\endaligned
$$
\end{Def}

Similar to Lemma \ref{Lem-MSnCN-f1Y} we have

\begin{Lem}\label{Lem-MSCoN-f1Y}  Assume \eqref{cond-MSf} and $H^0_n\in H^{-1/2}(\p\O)$.
\begin{itemize}
\item[(i)] If $\Y\in L^{2,-1/2}_\nu(\O,\Bbb R^3)$ is a weak solution of \eqref{MSCoN-f1Y}, then $\Y\in \mH(\O,\curl)$, and the first equality in \eqref{MSCoN-f1Y} holds a.e. in $\O$.

\item[(ii)] If $(\u,p)\in \mH(\O,\curl,\div0)\times H^1(\O)$ is a weak solution of \eqref{MSCoN-f1} and letting $\Z=\curl\u$, then $\Z\in \mH(\O,\div0)$ and it is a weak solution of \eqref{MSCoN-f1Z}.
On the other hand, if $\Z\in \mH(\O,\div0)$ is a weak solution of \eqref{MSCoN-f1Z}, then there exists $\h_2^0\in \Bbb H_2(\O)$ such that \eqref{MSCoN-f1} with $\h_2$ replaced by $\h_2+\h_2^0$ has a weak solution $(\u,p)\in H^1_{n0}(\O,\div0)\times H^1(\O)$ with $\curl\u=\Z-\h_2^0$. If furthermore $\Z\perp_{L^2(\O)}\Bbb H_2(\O)$, then $\h^0_2=\0$.

\item[(iii)] If $\Z\in \mH(\O,\div0)$ is a weak solution of \eqref{MSCoN-f1Z} and letting
$\Y=\mH(x,\Z(x)+\h_2(x)),$
then $\Y\in \mH(\O,\curl)$ and it is a weak solution of \eqref{MSCoN-f1Y}.
On the other hand, if $\Y\in L^{2,-1/2}_\nu(\O,\Bbb R^3)$ is a weak solution of \eqref{MSCoN-f1Y} and letting
$\Z=\mB(x,\Y(x))-\h_2(x),$
then $\Z\in \mH(\O,\div0)$ and it is a weak solution of \eqref{MSCoN-f1Z}.
\end{itemize}
\end{Lem}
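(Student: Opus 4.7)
\medskip

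\textbf{Proof proposal.} My plan is to mirror exactly the structure of Lemma \ref{Lem-MSnCN-f1Y}, since the only real differences between the two settings are (a) the boundary condition is co-normal on $\mH(\cdot,\Z+\h_2)$ rather than normal on $\Z$, and (b) the test space in \eqref{wk-MSCoN-f1} for $p$ is $H^1(\O)$ (giving a Neumann BC $\partial_\nu p=-\nu\cdot\f$), which is precisely the datum used to define $\mP_\nu$ in Section \ref{SecA}. So the bridge between \eqref{MSCoN-f1}, the $Z$-system \eqref{MSCoN-f1Z}, and the $Y$-system \eqref{MSCoN-f1Y} should come from the identification $\f+\nabla p=\mP_\nu[\f]$ together with the inversion $(H_3)$.

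\smallskip

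For part (i), I would copy the argument given in Lemma \ref{Lem-MSnCN-f1Y}(i) verbatim, replacing $\mH^\Sigma_0$-tests by $H^1_{t0}$-tests and using the weak formulation in Definition \ref{Def-MSCoN-f1Z}(ii). Testing first against $\bv\in H^1_{t0}(\O,\Bbb R^3)$ shows $\curl\Y$ exists as a continuous functional; restricting to $\w\in\mathcal D(\O,\Bbb R^3)$ and using the $L^2$ bound on $\mP_\nu[\f(x,\mB(x,\Y))]+\h_1$ (which is finite by $(B_1)$, $(f_1)$ and Remark \ref{Lem-A.2}) identifies $\curl\Y$ as an $L^2$ field satisfying the first equation of \eqref{MSCoN-f1Y} a.e. The boundary condition $\nu\cdot\Y=H^0_n$ is already built into Definition \ref{Def-MSCoN-f1Z}(ii).

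\smallskip

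For part (ii), the forward direction starts from a weak solution $(\u,p)$ of \eqref{MSCoN-f1} and sets $\Z=\curl\u$, so $\Z\in\mH(\O,\div0)$. The key observation is that the Neumann problem satisfied by $p$ in \eqref{wk-MSCoN-f1p}, combined with the boundary condition $\partial_\nu p=-\nu\cdot\f(x,\curl\u+\h_2)$, is exactly the problem \eqref{Neumann} defining $\phi_{\f}$ in the projection $\mP_\nu[\f(x,\curl\u+\h_2)]=\f(x,\curl\u+\h_2)+\nabla\phi_\f$. By uniqueness of the Neumann solution in $\dot H^1(\O)$, we get $\nabla p=\nabla\phi_{\f(x,\curl\u+\h_2)}$, hence
\[
\f(x,\curl\u+\h_2)+\nabla p=\mP_\nu[\f(x,\curl\u+\h_2)],
\]
and substituting into the PDE in \eqref{MSCoN-f1} produces the first equation of \eqref{MSCoN-f1Z}. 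The co-normal BC is identical. Conversely, given $\Z$ solving \eqref{MSCoN-f1Z}, write $\Z=\bold g+\h_2^0$ via the decomposition $\mH(\O,\div0)=\mH^\Gamma(\O,\div0)\oplus_{L^2(\O)}\Bbb H_2(\O)$ from \eqref{dec-0}; choose $\u\in H^1_{n0}(\O,\div0)$ with $\curl\u=\bold g$, set $p=\phi_{\f(x,\Z+\h_2)}$, and verify that $(\u,p)$ satisfies \eqref{MSCoN-f1} with $\h_2$ replaced by $\h_2+\h_2^0$. If $\Z\perp_{L^2(\O)}\Bbb H_2(\O)$, then $\h_2^0=\0$.

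\smallskip

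For part (iii), the forward direction sets $\Y=\mH(x,\Z+\h_2)$; by $(H_3)$ we have $\Z+\h_2=\mB(x,\Y)$, so $\div\mB(x,\Y)=\div(\Z+\h_2)=0$, and the co-normal condition $\nu\cdot\mH(x,\Z+\h_2)=H^0_n$ becomes $\nu\cdot\Y=H^0_n$. Substituting into the first equation of \eqref{MSCoN-f1Z} gives \eqref{MSCoN-f1Y}. Conversely, if $\Y$ solves \eqref{MSCoN-f1Y}, let $\Z=\mB(x,\Y)-\h_2$; then $(H_3)$ yields $\mH(x,\Z+\h_2)=\Y$, and $\div\Z=\div\mB(x,\Y)-\div\h_2=0$, so $\Z\in\mH(\O,\div0)$ and satisfies \eqref{MSCoN-f1Z}.

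\smallskip

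The one point that requires some care rather than mere transcription is the identification $\f+\nabla p=\mP_\nu[\f]$ in part (ii): one must check that the unique weak Neumann solution $p\in H^1(\O)$ from Definition \ref{Def-MSCoN-f1} coincides (up to an additive constant) with the projection potential $\phi_{\f}$ of Definition in \ref{SecA}, which amounts to comparing the two variational formulations \eqref{wkNeumann} and \eqref{wk-MSCoN-f1p}. Everything else is a direct transcription of the proof of Lemma \ref{Lem-MSnCN-f1Y}, so the main obstacle is purely bookkeeping of which projection ($\mP_\nu$ versus $\mP_n$) and which boundary trace ($\nu\cdot\mH$ versus $\nu\cdot\Z$) appears at each step.
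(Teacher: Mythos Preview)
Your proposal is correct and follows exactly the approach the paper takes: the paper gives no independent proof of this lemma, stating only that it is ``Similar to Lemma \ref{Lem-MSnCN-f1Y}'', and your transcription of that argument---with the appropriate replacement of $\mP_n$ by $\mP_\nu$ and of the normal trace $\nu\cdot\Z$ by the co-normal trace $\nu\cdot\mH(x,\Z+\h_2)$---is precisely what is intended. Your identification of the one nontrivial bookkeeping step, namely matching the weak Neumann formulation for $p$ with the defining equation \eqref{wkNeumann} of $\mP_\nu$, is the right place to focus attention.
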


\begin{Thm}  Assume \eqref{cond-MSf}, $(H_{3N})$, $(H^0)$ and \eqref{f0}. Then for any given $\h_1, \hat\h_1\in\Bbb H_1(\O)$ and $\h_2=\h_2(\hat\h_1)\in\Bbb H_2(\O)$ which depends on $\hat\h_1$ such that \eqref{MSCoN-f1} has a weak solution $(\u,p)\in H^1(\O,\Bbb R^3)\times H^1(\O)$ for the given $\h_1$ and for $\h_2=\h_2(\hat\h_1)$.
\end{Thm}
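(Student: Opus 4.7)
The plan is to follow the architecture of Theorem \ref{Thm-MANaN-f1} closely. First, by Lemma \ref{Lem-MSCoN-f1Y} parts (ii)-(iii), it suffices to construct a weak solution $\Y\in L^{2,-1/2}_\nu(\O,\Bbb R^3)$ of the $Y$-system \eqref{MSCoN-f1Y} for which $\mB(x,\Y)-\h_2\perp_{L^2(\O)}\Bbb H_2(\O)$; one then recovers $\Z=\mB(x,\Y)-\h_2\in\mH^\Gamma(\O,\div0)$, lifts via \eqref{im-curl} to $\u\in H^1_{n0}(\O,\div0)$ with $\curl\u=\Z$, and reads $p$ off the Neumann problem in Definition \ref{Def-MSCoN-f1}. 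The solution of \eqref{MSCoN-f1Y} will come from a Schauder fixed-point argument on the $\mH^\Sigma_0(\O,\div0)$-component of $\Y$, and then $\h_2=\h_2(\hat\h_1)\in\Bbb H_2(\O)$ is chosen so as to enforce the orthogonality.

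To solve \eqref{MSCoN-f1Y}, I would apply the decomposition \eqref{dec-1} and write $\Y=\w+\hat\h_1+\nabla\psi$ with $\w\in\mH^\Sigma_0(\O,\div0)$, $\hat\h_1\in\Bbb H_1(\O)$, and $\psi\in\dot H^1(\O)$. Since $\nu\cdot\w=0$ and $\nu\cdot\hat\h_1=0$ on $\p\O$, the condition $\nu\cdot\Y=H^0_n$ reduces to $\p\psi/\p\nu=H^0_n$; using $(H_{3N})$ together with the lifting $\H^0$ from $(H^0)$, this is equivalent to $\nu\cdot\mB(x,\Y)=\nu\cdot\mB(x,\H^0)$ on $\p\O$. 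Paired with $\div\mB(x,\Y)=0$ in $\O$, this is a standard quasilinear Neumann problem for $\psi$. Because $\mB$ satisfies $(B_1),(B_2)$, the Browder-Minty argument already used in Propositions \ref{Prop-sol}(i) and \ref{Prop-ex-MnC-BY} produces a unique $\psi_{\w,\hat\h_1}\in\dot H^1(\O)$, and the map $(\w,\hat\h_1)\mapsto\nabla\psi_{\w,\hat\h_1}$ is locally Lipschitz and bounded from $\mH^\Sigma_0(\O,\div0)\times\Bbb H_1(\O)$ into $\grad H^1(\O)$, with an $L^2$-bound $\|\nabla\psi_{\w,\hat\h_1}\|_{L^2(\O)}\le C(\|\w+\hat\h_1\|_{L^2(\O)}+1)$.

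Next, set $\Y_{\w,\hat\h_1}=\w+\hat\h_1+\nabla\psi_{\w,\hat\h_1}$, and define $\V_{\hat\h_1}$ on $\mH^\Sigma_0(\O,\div0)$ by letting $\V_{\hat\h_1}[\w]=\bv$ be the unique solution in $H^1(\O,\Bbb R^3)\cap\mH^\Sigma_0(\O,\div0)$ of
\begin{equation*}
\curl\bv=\mP_\nu[\f(x,\mB(x,\Y_{\w,\hat\h_1}))]+\h_1,\qq\div\bv=0\ \text{in }\O,\qq\nu\cdot\bv=0\ \text{on }\p\O.
\end{equation*}
Because $\mP_\nu$ projects into $\mH^\Gamma(\O,\div0)$ (Lemma \ref{Lem-A.2}) and $\h_1\in\Bbb H_1(\O)\subset\mH^\Gamma(\O,\div0)$, the right-hand side lies in $\mH^\Gamma(\O,\div0)=\curl H^1_{n0}(\O,\div0)$; imposing $\bv\perp_{L^2}\Bbb H_1(\O)$ pins down a unique solution with $\|\V_{\hat\h_1}[\w]\|_{H^1(\O)}\le C(1+\|\w+\hat\h_1\|_{L^2(\O)})$ coming from $(B_1),(f_1)$ and the bound on $\nabla\psi$. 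The Sobolev embedding makes $\V_{\hat\h_1}$ compact on $\mH^\Sigma_0(\O,\div0)$, and the sublinear growth \eqref{f0} lets one upgrade the estimate so that $\V_{\hat\h_1}$ maps some closed ball $B_R$ into itself; Schauder's theorem then furnishes a fixed point $\w_{\hat\h_1}$, and $\Y_{\hat\h_1}:=\w_{\hat\h_1}+\hat\h_1+\nabla\psi_{\w_{\hat\h_1},\hat\h_1}$ is a weak solution of \eqref{MSCoN-f1Y}.

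Finally, mimicking \eqref{h20}, I would choose
\begin{equation*}
\h_2(\hat\h_1)=\sum_{k=1}^m a^k\y^k,\qq a^k=\int_\O\langle\mB(x,\Y_{\hat\h_1}),\y^k\rangle dx,
\end{equation*}
for $\{\y^1,\dots,\y^m\}$ an orthonormal basis of $\Bbb H_2(\O)$, so that $\mB(x,\Y_{\hat\h_1})-\h_2(\hat\h_1)\perp_{L^2(\O)}\Bbb H_2(\O)$; Lemma \ref{Lem-MSCoN-f1Y}(ii) then produces the desired $(\u,p)\in H^1(\O,\Bbb R^3)\times H^1(\O)$ solving \eqref{MSCoN-f1} with this $\h_2$. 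The main technical obstacle is the reformulation step: the raw boundary condition $\nu\cdot\Y=H^0_n$ mixes $\nabla\psi$ with the nonlinear $\mB$ in an awkward way inside the variational form of $\div\mB(x,\Y)=0$, and $(H_{3N})$ together with $(H^0)$ is precisely what converts it into the standard Neumann condition $\nu\cdot\mB(x,\Y)=\nu\cdot\mB(x,\H^0)$ that the monotone-operator machinery handles; once this recasting is in place, the remainder is a direct adaptation of Theorem \ref{Thm-MANaN-f1}.
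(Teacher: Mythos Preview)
Your proposal is correct and follows essentially the same route as the paper: the same Hodge-type decomposition \eqref{dec-1} of $\Y$, the same use of $(H_{3N})$ and $(H^0)$ to recast the boundary condition as the Neumann condition $\nu\cdot\mB(x,\Y)=\nu\cdot\mB(x,\H^0)$, the same monotone-operator solution for $\psi_{\w,\hat\h_1}$, the same div-curl system \eqref{MSCoN-f1v} for $\bv$, and the same Schauder fixed-point argument on $\mH^\Sigma_0(\O,\div0)$ followed by the choice of $\h_2(\hat\h_1)$ to enforce orthogonality to $\Bbb H_2(\O)$. Your explicit identification of the main technical obstacle and its resolution via $(H_{3N})$ matches exactly what the paper does.
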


\begin{proof} {\it Step 1}. By Lemma \ref{Lem-MSCoN-f1Y}, we first solve  \eqref{MSCoN-f1Y}. Using \eqref{dec-1}, we write
$$\Y=\w+\hat\h_1+\nabla\psi,\q \w\in \mH^\Sigma_0(\O,\div0),\q \hat\h_1\in\Bbb H_1(\O),\q \psi\in H^1(\O).
$$
From the last two equalities in \eqref{MSCoN-f1Y} and using $(H^0)$ we have
$$
\left\{\aligned
&\div[\mB(x,\w+\hat\h_1+\nabla\psi)]=0\q &\text{in }\O,\\
&\nu\cdot(\w+\hat\h_1+\nabla\psi)=\nu\cdot\H^0\q&\text{on }\p\O.
\endaligned\right.
$$
Using $(H_3)$ and $(H_{3N})$ we see that this BVP can be written as
\eq\label{MSCoN-fB}
\left\{\aligned
&\div[\mB(x,\w+\hat\h_1+\nabla\psi)]=0\q &\text{in }\O,\\
&\nu\cdot\mB(x,\w+\hat\h_1+\nabla\psi)=\nu\cdot\mB(x,\H^0)\q&\text{on }\p\O.
\endaligned\right.
\eeq
Hence $(\w,\hat\h_1,\nabla\psi)$ satisfies \eqref{MSCoN-fB}.

By the monotone operator methods we can show that, for any given $\w\in \mH^\Sigma_0(\O,\div0)$ and $\hat\h_1\in\Bbb H_1(\O)$,  \eqref{MSCoN-fB} has a unique solution $\psi=\psi_{\w,\hat\h_1}\in \dot H^1(\O)$.

{\it Step 2}. Denote
$$
\Y_{\w,\hat\h_1}=\w+\hat\h_1+\nabla\psi_{\w,\hat\h_1},\q
\Z_{\w,\hat\h_1}= \mB(x,\w+\hat\h_1+\nabla\psi_{\w,\hat\h_1})-\h_2.
$$
We look for $\bold v\in \mH^\Sigma_0(\O,\div0)$  such that
\eq\label{MSCoN-f1v}
\left\{\aligned
&\curl \bv=\mP_\nu[\f(x,\mB(x,\Y_{\w,\hat\h_1}))]+\h_1\q &\text{in }\O,\\
&\div\bv=0\q&\text{in }\O,\\
&\nu\cdot\bv=0\q&\text{on }\p\O.
\endaligned\right.
\eeq
By the definition of $\mP_\nu$ we see that the right side of the first equation of \eqref{MSCoN-f1v} lies in $\mH^\Gamma(\O,\div0)$.
So \eqref{MSCoN-f1v} has a unique solution $\bv=\bold V_{\hat\h_1}[\w]\in H^1_{n0}(\O,\div0)\cap \Bbb H_1(\O)^\perp_{L^2(\O)}= H_1(\O,\Bbb R^3)\cap \mH^\Sigma_0(\O,\div0)$. Similar to Lemma \ref{Lem-MSnCN-f1v} we can show that
$$
\|\bold V_{\hat\h_1}[\w]\|_{H^1(\O)}\leq C\{\|\w+\hat\h_1\|_{L^2(\O)}+\|H^0_n\|_{H^{-1/2}(\p\O)}\}+C,
$$
where $C$ depends on $\O, \mH(x,\z)$ and $\f(x,\z)$.
Using the Sobolev imbedding theorem we know that $\bold V_{\hat\h_1}$ is compact from $\mH^\Sigma_0(\O,\div0)$ into itself.

Since $\f(x,\z)$ satisfies \eqref{f0}, then there exists $R>0$ such that $\bold V_{\hat\h_1}$ maps the closed ball $B_R$ with center $\0$ and  radius $R$ in $\mH^\Sigma_0(\O,\div0)$ into $B_R$ itself. By Schauder's fixed point theorem we conclude that $\bold V_{\hat\h_1}$ has a fixed point $\w_{\hat\h_1}$ in $B_R$.

{\it Step 3}.
Let
$$\aligned
&\psi_{\hat\h_1}=\psi_{\w_{\hat\h_1},\hat\h_1}, \q   \Y_{\hat\h_1}=\w_{\hat\h_1}+\hat\h_1+\nabla\psi_{\hat\h_1},\\
&\Z_{\hat\h_1,\h_2}= \mB(x,\w_{\hat\h_1}+\hat\h_1+\nabla\psi_{\hat\h_1})-\h_2.
\endaligned
$$
$\w_{\hat\h_1}$ satisfies
\eq\label{MSCoN-f1wh2}
\left\{\aligned
&\curl \w_{\hat\h_1}=\mP_\nu[\f(x,\mB(x,\Y_{\hat\h_1}))]+\h_1\q &\text{in }\O,\\
&\div\w_{\hat\h_1}=0\q&\text{in }\O,\\
&\nu\cdot\w_{\hat\h_1}=H^0_n\q&\text{on }\p\O.
\endaligned\right.
\eeq
Then $\Y_{\hat\h_1}$ is a solution of \eqref{MSCoN-f1Y}, and by Lemma \ref{Lem-MSCoN-f1Y} (iii) we know that $\Z_{\hat\h_1,\h_2}$ is a solution of \eqref{MSCoN-f1Z}.

Now we choose $\h_2=\h_2(\hat\h_1)$ such that
\eq\label{MSCoN-f1-orth}
\int_\O\langle \mB(x,\w_{\hat\h_1}+\hat\h_1+\nabla\psi_{\hat\h_1})-\h_2(\hat\h_1), \h\rangle dx=0,\q\forall \h\in \Bbb H_2(\O).
\eeq
Denote $\Z_{\hat\h_1}=\Z_{\hat\h_1,\h_2(\hat\h_1)}$. Then $\Z_{\hat\h_1}\in \mH^\Gamma(\O,\div0)$.  By Lemma \ref{Lem-MSCoN-f1Y} (ii), there exists $(\u,p)\in H_{n0}^1(\O,\div0)\times H^1(\O)$ which solves \eqref{MSCoN-f1} for the given $\h_1$ and for $\h_2=\h_2(\hat\h_1)$.

\end{proof}

\subsection*{Acknowledgements}  This work was partially supported
by the National Natural Science Foundation of China grants no.  11671143 and no. 11431005.

\enddocument